\documentclass[12pt]{amsart}
\usepackage[utf8]{inputenc}

\title[Compressed sensing for inverse problems II]{Compressed sensing for inverse problems II: applications to deconvolution,  source recovery, and MRI}
\author[G.\ S.\ Alberti]{Giovanni S.\ Alberti}
\address{MaLGa Center, Department of Mathematics, Department of Excellence 2023-2027, University of Genoa, Via Dodecaneso 35, 16146 Genova, Italy.}
\email{giovanni.alberti@unige.it}

\author[A.\ Felisi]{Alessandro Felisi}
\address{MaLGa Center, Department of Mathematics, Department of Excellence 2023-2027, University of Genoa, Via Dodecaneso 35, 16146 Genova, Italy.}
\email{alessandro.felisi@edu.unige.it}

\author[M.\ Santacesaria]{Matteo Santacesaria}
\address{MaLGa Center, Department of Mathematics, Department of Excellence 2023-2027, University of Genoa, Via Dodecaneso 35, 16146 Genova, Italy.}
\email{matteo.santacesaria@unige.it}

\author[S.\ I.\ Trapasso]{S.\ Ivan Trapasso}
\address{Department of Mathematical Sciences ``G.\ L.\ Lagrange'', Politecnico di Torino, Corso Duca degli Abruzzi, 24, 10129 Torino, Italy}
\email{salvatore.trapasso@polito.it}

\date{\today}

\usepackage{amsmath, amssymb, amsfonts, amsthm, amsopn, cancel}
\usepackage{graphicx,tikz}
\usepackage[all]{xy}
\usepackage{amsmath}
\usepackage{bbm}
\usepackage{multirow, longtable, makecell, caption, array, enumitem}
\usepackage{amssymb}
\usepackage{mathtools}
\mathtoolsset{showonlyrefs}
\usepackage{bookmark}
\usepackage{hyperref}

\allowdisplaybreaks

\setlength{\textwidth}{\paperwidth}
\addtolength{\textwidth}{-2.5in}
\calclayout

\theoremstyle{plain}
\newtheorem{theorem}{Theorem}[section]
\newtheorem{lemma}[theorem]{Lemma}

\newtheorem{proposition}[theorem]{Proposition}

\theoremstyle{definition}
\newtheorem{assumption}[theorem]{Assumption}
\theoremstyle{definition}
\newtheorem{definition}[theorem]{Definition}
\theoremstyle{definition}

\theoremstyle{plain}
\newtheorem*{theorem*}{Theorem}
\theoremstyle{definition}
\newtheorem{example}[theorem]{Example}
\theoremstyle{remark}
\newtheorem{remark}[theorem]{Remark}

\setcounter{MaxMatrixCols}{10}

\def\bC{\mathbb{C}}

\def\bN{\mathbb{N}}

\def\bR{\mathbb{R}}

\def\bZ{\mathbb{Z}}

\def\cB{\mathcal{B}}
\def\cD{\mathcal{D}}
\def\cF{\mathcal{F}}
\def\cH{\mathcal{H}}

\def\cHt{\mathcal{H}_2}

\def\cL{\mathcal{L}}

\def\cR{\mathcal{R}}

\def\dim{N}

\def\lc{\left(}
\def\rc{\right)}

\def\supp{\operatorname{supp}}

\def\*b{*_{\bullet}}

\def\Bd'{B_{\delta'}}

\def\cBd'{\bar{B}_{\delta'}}

\newcommand{\diag}{\mathrm{diag}}
\newcommand{\diff}{\mathrm{d}}
\newcommand{\Div}{\mathrm{div}}

\newcommand{\Span}{\mathrm{span}}

\begin{document}

\begin{abstract}
    This paper extends the sample complexity theory for ill-posed inverse problems developed in a recent work by the authors [\textit{Compressed
sensing for inverse problems and the sample complexity of the sparse Radon transform}, J.\ Eur.\ Math.\ Soc., to appear], which was originally focused on the sparse Radon transform. We demonstrate that the underlying abstract framework, based on infinite-dimensional compressed sensing and generalized sampling techniques, can effectively handle a variety of practical applications. Specifically, we analyze three case studies:  (1) The reconstruction of a sparse signal from a finite number of pointwise blurred samples; (2) The recovery of the (sparse) source term of an elliptic partial differential equation from finite samples of the solution; and (3) A moderately ill-posed variation of the classical sensing problem of recovering a wavelet-sparse signal from finite Fourier samples, motivated by magnetic resonance imaging. For each application, we establish rigorous recovery guarantees by verifying the key theoretical requirements, including quasi-diagonalization and coherence bounds. Our analysis reveals that careful consideration of balancing properties and optimized sampling strategies can lead to improved reconstruction performance. The results provide a unified theoretical foundation for compressed sensing approaches to inverse problems while yielding practical insights for specific applications.
\end{abstract}

\subjclass{42C40, 94A20, 35R30}

\keywords{Compressed sensing, inverse problems, sparse recovery, wavelets, quasi-diagonalization, coherence, deconvolution, inverse source problems, magnetic resonance imaging}

\maketitle

%\tableofcontents

\section{Introduction}

 %\nobibliography*

%The rise of the compressed sensing (CS) paradigm has had a huge impact in the area of signal analysis over the last twenty years \cite{CRT,donoho2006compressed,eldar-kutyniok-2012,FR,fornasier-2015}. In short, it is established that accurate and robust signal reconstruction can be achieved using far fewer measurements than standard sampling techniques require when it comes to \textit{sparse} signals --- namely, those whose expansions along a suitable dictionary of elementary signals (e.g., wavelets) have only a finite number of non-vanishing coefficients, a condition that is approximately met in most circumstances of practical interest. 

%While the findings of classical CS theory and the corresponding real-life applications are naturally framed into finite-dimensional models, in order to leverage sparsity when dealing with more challenging inverse imaging problems it is necessary to govern the unavoidable transition from inherently infinitely-dimensional models to finite-dimensional settings where only a finite, limited number of measurements is available. Pioneering contributions in this connections are to be found in the generalized sampling framework introduced in \cite{AH1,Poon2014,adcock2016generalized}.

The compressed sensing (CS) paradigm has fundamentally transformed signal analysis over the past two decades \cite{CRT,donoho2006compressed,eldar-kutyniok-2012,FR,fornasier-2015}. At its core, CS establishes that accurate signal reconstruction is possible using far fewer measurements than traditional sampling methods require, provided the signal is sparse in some appropriate basis. While this insight has led to numerous practical applications, most classical CS theory is formulated in finite-dimensional settings. However, many important inverse problems in imaging and signal processing are inherently infinite-dimensional, necessitating a more sophisticated theoretical framework \cite{AH1,Poon2014,adcock2016generalized}.

%The purpose of this work is to generalize and further develop the \textit{sample complexity theory} for ill-posed inverse problems introduced by the authors in \cite{split1}. Indeed, due to the extent of technical matters, we chose to focus there on a single major application --- the sample complexity of the sparse Radon transform. On the other hand, the abstract recovery results proved in \cite{split1} are rather flexible and allow one to leverage additional features than those needed in the context of the Radon transform. 

% In a nutshell, our general compressed sensing result for infinite dimensional linear inverse problems  \cite[Theorem 3.11]{split1} relies on three fundamental ingredients: 
% \begin{itemize}
%     \item Assuming that the forward map is approximately diagonalized by the elements of the analysis dictionary.
%     \item A multiscale coherence bound intertwining measurement operators, sampling and analysis dictionary.
%     \item A lower bound on the sampling probability density, ultimately related to the structure of noise affecting the measurements. 
% \end{itemize} 

This work extends the sample complexity theory for ill-posed inverse problems that we introduced in \cite{split1}. Our framework bridges the gap between infinite-dimensional models and practical scenarios where only finite measurements are available. It rests on three fundamental pillars:
\begin{itemize}
\item Approximate diagonalization of the forward map by elements of the analysis dictionary.
\item Multiscale coherence bounds connecting measurement operators with the sampling and analysis dictionaries.
\item Lower bounds on the sampling probability density that account for measurement noise.
\end{itemize}

%We show below how this general theory can be applied to several inverse problems of interest, possibly in presence of additional constraints. One instance of such issues occurs when sampling is contingent on truncation of the range of the forward map of the problem --- consider for instance the Fourier samples of a signal, which in real life situations can only be drawn from a (possibly large, but) finite bandwidth. Therefore, in compliance with the principles of generalized sampling \cite{adcock2016generalized}, we further assume that a suitable \textit{balancing property} holds, appropriately chosen in light of the underlying quasi-diagonalization property. In short, balancing properties provide one with a way to quantitatively ensure the stability of the transition from the original infinite-dimensional setting to a convenient finite-dimensional scenario and tame the unavoidable obstructions associated with truncation procedures. More details can be found in Section \ref{sec-abs-mod} below. 

%A different yet related aspect that we explore in this note concerns the optimization of the sampling strategy. Indeed, it turns out that the choice of the sampling probability density can be fine-tuned in connection with the structure of the coherence bounds, paving the way to optimal sample complexity results. Incorporating all these features in the abstract recovery framework developed in \cite{split1} results in a convenient broadening of its scope.

We demonstrate how this general theory extends to diverse inverse problems, particularly those involving practical constraints. A key challenge arises when sampling must be restricted to a finite-dimensional subspace of the forward map's range - for example, in practice, Fourier samples can only be acquired within a finite (though potentially large) bandwidth. To address this limitation, we introduce a \textit{balancing property} that aligns with the principles of generalized sampling theory \cite{adcock2016generalized}. This property provides a quantitative framework for ensuring stable transitions from infinite-dimensional settings to tractable finite-dimensional approximations, while controlling the errors inherent in such truncations. Section \ref{sec-abs-mod} provides a detailed mathematical treatment of these concepts.

We also develop novel strategies for optimizing the sampling procedure itself. Our analysis reveals that the sampling probability density can be carefully tailored to match the structure of the coherence bounds, leading to improved sample complexity results. By incorporating both the balancing property and optimized sampling strategies into the abstract recovery framework of \cite{split1}, we significantly expand its applicability while maintaining theoretical rigor.

%To provide concrete evidence of the relevance of these improvements, as well as the flexibility of the abstract recovery framework, we investigate the sample complexity of three inverse problems of markedly diverse nature. 
We demonstrate the framework's versatility through three distinct applications.
\begin{itemize}
    \item %In Section \ref{sec:deconv} we investigate the problem of recovering a signal $f$, assumed to be sparse in the wavelet coefficients, from pointwise blurred samples --- that is, samples of $f*k$ for a given convolution kernel $k$ \cite{xiao2011, li-wang2013, tsagkatakis2014, duval-peyre2015}. We carefully check the validity of the aforementioned assumptions and discuss improved sampling strategies that leverage the decay at infinity of the blurring kernel. Moreover, we also specialize the recovery result  to the case of cartoon-like images. 
    In Section~\ref{sec:deconv}, we tackle the deconvolution problem: recovering a signal $f$ that is sparse in its wavelet representation from pointwise samples of its convolution $f*k$ with a blurring kernel $k$ \cite{xiao2011,li-wang2013,tsagkatakis2014,duval-peyre2015}. Beyond establishing the theoretical framework's applicability, we develop an optimized sampling strategy that exploits the kernel's decay properties at infinity. This approach leads to improved recovery guarantees compared to uniform sampling. We then demonstrate the practical significance of our results by providing explicit recovery bounds for cartoon-like images --- an important class of signals characterized by piecewise smooth regions separated by regular discontinuities.

%     \item The focus of Section \ref{sec:elliptic} is an inverse source problem \cite{isakov2, elbadia-2000, biros-dogan-2008, isakov}, namely, the inversion of the solution operator of a boundary value problem associated with a  second order elliptic PDE in the plane. More precisely, given a $C^2$ bounded domain $\Omega \subset \bR^2$, a coefficient $\sigma \in C^1(\overline \Omega)$ and a solution $w \in H^1(\Omega)$ of the problem
%     \begin{equation}
%     \begin{cases}
%         -\Div(\sigma\nabla{w}) = u & \text{in $\Omega$},\\
%         w|_{\partial\Omega} = 0 & \text{on $\partial\Omega$},
%     \end{cases} 
% \end{equation} the goal is to recover the source term $u \in L^2(\Omega)$ --- note that the forward map fails to be translation-equivariant in this setting. We obtain recovery estimates for the source $u$ from a sufficiently large number of (noisy, pointwise) samples of $w$ under sparsity assumptions with respect to a suitable wavelet dictionary. We note that the results of this section are obtained without resorting to additional balancing properties. 

\item Section~\ref{sec:elliptic} addresses an inverse source problem \cite{isakov2,elbadia-2000,biros-dogan-2008,isakov} involving the inversion of a solution operator for an elliptic partial differential equation. Specifically, we consider a boundary value problem on a bounded domain $\Omega \subset \bR^2$ of class $C^2$ with coefficient $\sigma \in C^1(\overline \Omega)$. Given a solution $w \in H^1(\Omega)$ of
\begin{equation}
    \begin{cases}
        -\Div(\sigma\nabla{w}) = u & \text{in $\Omega$},\\
        w|_{\partial\Omega} = 0 & \text{on $\partial\Omega$},
    \end{cases} 
\end{equation}
our goal is to reconstruct the source term $u \in L^2(\Omega)$ from finite pointwise measurements of $w$. This problem presents unique challenges as the forward map lacks translation-equivariance, distinguishing it from the deconvolution setting. We establish recovery guarantees for wavelet-sparse sources using only noisy point samples of the solution. Notably, these results are achieved without requiring additional balancing properties, demonstrating the framework's adaptability to diverse geometric settings.

% \item In Section \ref{sec-fou-ill} we revisit a standard problem in the compressed sensing literature, that is the reconstruction of a (wavelet-)sparse $L^2$ signal via finite Fourier samples \cite{CRT, LDSP, LDP, alberti2017infinite}. In particular, we consider an ill-posed variation in the plane where the signal is pre-processed by multiplication with a modulation filter that decays away from the origin, inspired by models for magnetic resonance imaging (MRI). The balancing property plays a crucial role here, and recovery guarantees are obtained from samples drawn at random from an optimized distribution.

% There is a vast amount of literature on optimal sampling patterns for MRI, usually characterized by a variable density probability distribution. We mention the theoretical works \cite{puy2011, chauffert2013, chauffert2014, AHC, krahmer2014, poon2016, boyer2019}, concerned with proving recovery results based on compressed sensing assumptions, and the data-driven approaches \cite{ravishankar2011, seeger2010, knoll2011, gozcu2018, gozcu2019, haldar2018, weiss2020, hemant2020, schoenlieb2020}, which are adapted to specific datasets. In our work, we show that a suitable variable density sampling pattern ensures a stable reconstruction even in an ill-posed scenario.

\item In Section~\ref{sec-fou-ill}, we examine a fundamental problem in compressed sensing: the reconstruction of wavelet-sparse $L^2$ signals from finite Fourier samples \cite{CRT,LDSP,LDP,alberti2017infinite}. We consider an ill-posed variant motivated by magnetic resonance imaging (MRI), where the signal undergoes modulation by a filter that decays away from the origin. Our analysis demonstrates that the balancing property is essential for establishing recovery guarantees when sampling from an optimized probability distribution.

The design of optimal sampling patterns for MRI has generated extensive research, particularly regarding variable density sampling strategies. The literature broadly divides into two approaches: theoretical studies that establish recovery guarantees within the compressed sensing framework \cite{puy2011,chauffert2013,chauffert2014,AHC,krahmer2014,poon2016,boyer2019}, and data-driven methods that optimize sampling patterns for specific applications \cite{ravishankar2011,seeger2010,knoll2011,gozcu2018,gozcu2019,haldar2018,weiss2020,hemant2020,schoenlieb2020}. Our contribution bridges these perspectives by proving that appropriately designed variable density sampling ensures stable reconstruction even in the presence of ill-posedness, thereby extending classical compressed sensing results to more realistic MRI scenarios.
\end{itemize}

The paper is organized as follows. Section~\ref{sec-abs-mod} presents our abstract framework, including the precise formulation of the balancing property and of sampling optimization strategies. In Section~\ref{sec:deconv}, we analyze the sparse deconvolution problem, establishing recovery guarantees and deriving explicit bounds for cartoon-like images. Section~\ref{sec:elliptic} addresses the inverse source problem for elliptic PDEs, demonstrating our framework's applicability to non-translation-invariant settings. Section~\ref{sec-fou-ill} develops the theory for ill-posed Fourier sampling, with particular emphasis on MRI applications. The proofs of all results are collected in Section~\ref{sec-proofs}. Technical results about wavelets and their properties are gathered in Appendix~\ref{appendix:wavelets}.

%The proofs of these results are delayed to Section \ref{sec-proofs} for the sake of readability. For the same reason, we collected some frequently used facts about wavelets in Appendix~\ref{appendix:wavelets}.  

\section{Setting and main result}\label{sec-abs-mod}
\subsection{Sparsity}
\label{sec:sparsity}
Let $\bN$ denote the set of non-negative integer numbers.

Let $\Gamma$ be a finite or countable double-index set with elements of the form $(j,n)$, where $j\in\bN$ is an index representing the scale. We will consider also finite subsets of $\Gamma$ of the following form:
\begin{align*}
    \Lambda_j \coloneqq \{(j',n)\in\Gamma\colon\ j'= j\},\quad
    \Lambda_{\leq j} \coloneqq \{(j',n)\in\Gamma\colon\ j'\leq j\}.
\end{align*}
We use the notation $M_j\coloneqq |\Lambda_j|$ and $M_{\leq j}\coloneqq |\Lambda_{\leq j}|$ for the cardinality of these sets, and we always assume that $M_j < +\infty$ for every $j\in\bN$.

We denote by $P_j$ the orthogonal projection on $\ell^2(\Gamma)$ defined by \[ (P_j x)_{j',n} = \begin{cases} x_{j',n} & (j',n) \in \Lambda_j \\ 0 & (j',n) \notin \Lambda_j, \end{cases} \] and define similarly the projection $P_{\leq j}$. The image of $P_j$ is thus $\ell^2_{\Lambda_j}(\Gamma) \coloneqq \mathrm{span} \{ e_{j',n} : (j',n) \in \Lambda_j \},$ $(e_{j',n})_{(j',n) \in \Gamma}$ being the canonical basis of $\ell^2(\Gamma)$. We conveniently identify, with an abuse of notation, $\ell^2_{\Lambda_j}(\Gamma)$ with $\ell^2(\Lambda_j)$ or with $\bC^{M_j}$. We denote the corresponding adjoint map by $\iota_{j}$, that is the canonical embedding $\ell^2_{\Lambda_j}(\Gamma) \to \ell^2(\Gamma)$. We also set $P_{j}^\bot \coloneqq I - P_{j}$, where $I$ is the identity operator. Similar notations and conventions are understood when dealing with $P_{\leq j}$ and $\ell_{\Lambda_{\leq j}}^2(\Gamma)$.

For $0<p\leq 2$, we introduce the set
\begin{align*}
    \ell^p(\Gamma)\coloneqq
    \Big\{ x\in\bC^{\Gamma}\colon\ \|x\|_p \coloneqq
    \Big( \sum_{(j,n)\in\Gamma} |x_{j,n}|^p \Big)^{1/p}<+\infty \Big\}.
\end{align*}
We also define
\begin{align*}
    \|x\|_0 \coloneqq |\supp(x)|, \quad x\in\bC^\Gamma,
\end{align*}
where $\supp(x)\coloneqq\{(j,n)\in\Gamma\colon\ x_{j,n}\neq 0\}$.

We now introduce the concept of \textit{sparsity}. Let $s\in\bN$. The following is defined as the  \textit{error of best $s$-sparse approximation} of $x\in\bC^{\Gamma}$ with respect to the $\ell^p$-norm:
\begin{align*}
    \sigma_s(x)_p \coloneqq
    \inf\{ \|x-y\|_p\colon\ y\in\bC^\Gamma,\ \|y\|_0\leq s \}.
\end{align*}
Equivalently, we have that
\begin{align*}
    \sigma_s(x)_p =
    \inf\{ \|x_{S^c}\|_p\colon\ S\subset\Gamma,\ |S|\leq s \},
\end{align*}
where $x_{S^c}$ denotes the projection of $x$ on the indices corresponding to the set $S^c=\Gamma\setminus S$. If $\tilde{S}\subset\Gamma$ is such that $|\tilde{S}|\leq s$ and $\sigma_s(x)_p=\|x_{\tilde{S}^c}\|_p$, we say that $x_{\tilde{S}}$ is a \textit{best $s$-sparse approximation to $x$} with respect to $\ell^p$. Notice that $x_{\tilde{S}}$ is not unique, in general.

We say that a vector $x\in\bC^{\Gamma}$ is \textit{$s$-sparse} if $\|x\|_0\le s$ or, equivalently, if $\sigma_{s}(x)_p=0$ for some $p$.

\subsection{Setting}
\label{sec:setting}
Let us now introduce the mathematical framework that underpins this work. While the foundation is largely derived from \cite{split1}, we extend that approach by incorporating a more general noise model. This enhanced framework not only accommodates a broader class of measurement uncertainties but also enables us to derive more refined reconstruction estimates with explicit dependence on the noise characteristics.

%Let us now introduce the framework of this work, which is mostly taken from \cite{split1} --- although the noise model considered here is more general, and allows us to obtain more refined estimates.

\subsubsection*{Hilbert spaces} Let $\cH_1,\cH_2$ be complex and separable Hilbert spaces. In our model, $\cH_1$  denotes the space of signals and $\cH_2$ the space of measurements.

\subsubsection*{Dictionary} Let $(\phi_{j,n})_{(j,n)\in\Gamma}$ be an orthonormal basis of $\cH_1$ and let $\Phi\colon\cH_1\rightarrow\ell^2(\Gamma)$ be the corresponding analysis operator, so that $\Phi{u}=\lc \langle u,\phi_{j,n} \rangle \rc_{(j,n)\in\Gamma}$. The synthesis operator is then provided by its adjoint $\Phi^*\colon \ell^2(\Gamma)\to \cH_1$, given by $\Phi^* x = \sum_{j,n} x_{j,n} \phi_{j,n}$.

Our results are mainly applied to the case where $(\phi_{j,n})_{(j,n)\in\Gamma}$ is a wavelet dictionary of $L^2(\Omega)$ for some domain $\Omega\subset\bR^\dim$, and we accordingly interpret $j\in\bN$ as a scale index and $n\in\bZ^\dim\times \{0,1\}^\dim$ as a translation and `wavelet type' parameter --- see Appendix~\ref{appendix:wavelets}, where we provide a brief review of the construction of wavelets and of their properties. Notice that, in this case, the constants $M_j$ are proportional to $2^{\dim j}$; this implies that the factor $j_0$ appearing in the estimates can be seen as a logarithmic factor with respect to $M_{\leq j_0}\asymp 2^{\dim j_0}$.

\subsubsection*{Measurement space} Let $(\cD,\mu)$ be a measure space and let $f_{\nu}\in L^1(\mu)$ be a positive probability density function, i.e., $\|f_{\nu}\|_{L^1}=1$. Let $\diff{\nu}=f_{\nu}\diff{\mu}$ be the corresponding probability measure, from which the random samples for the measurements are drawn. The optimal choice of the probability distribution plays a crucial role in the present work (cf.\ Section \ref{sec:prob_distr} below).

\subsubsection*{Measurement operators and forward map} Let
\[
F_t\colon\cH_1\rightarrow\cH_2,\qquad t\in\cD,
\]
be a collection of bounded linear maps. We additionally assume that the mappings $t\mapsto F_t{u}$ belong to the space $L_{\mu}^2(\cD;\cH_2)$ for every $u\in\cH_1$. We define the forward map \[
F\colon\cH_1\mapsto L_{\mu}^2(\cD;\cH_2),\qquad (Fu)(t)\coloneqq F_t{u},
\]
and suppose that $\|F\|\leq C_F$.

\subsubsection*{Noise and truncation error} Given an unknown signal $u^{\dagger}\in\cH_1$ and $m$ i.i.d.\ samples $t_1,\dots,t_m\sim\nu$, the noisy measurements will be the following:
\begin{align*}
    y_k \coloneqq F_{t_k}{u^\dagger} + \varepsilon_k,\quad k=1,\dots,m,
\end{align*}
with $\varepsilon_k\in\cH_2$.

In practice, we approximate $u^{\dagger}$ with a signal that belongs to a fixed finite-dimensio\-nal subspace of $\cH_1$, that is $\Span(\phi_{j,n})_{(j,n)\in\Lambda_{\leq j_0}}$ --- see the minimization problems in Theorem~\ref{thm:abstract}. For the examples discussed below, this subspace is generated by wavelets up to a maximum resolution level $j_0\in\bN$, which must be chosen a priori. This truncation introduces an approximation error $P_{\leq j_0}^{\perp} x^{\dagger}$, which we analyze in detail in the subsequent sections.

In the present work we consider two alternative assumptions on the noise and on the truncation error.
\begin{itemize} 
\item The first is a uniform bound, given by
\begin{equation}
\label{eq:noise_unif_bound}
    \max_k \|\varepsilon_k\|_{\cH_2} \leq \beta,
\end{equation}
where $\beta\ge0$ is the noise level. In this case, for a fixed $\Lambda_{\leq j_0}\subset\Gamma$, we make this assumption on the truncation error:
\begin{equation}
\label{eq:ass-truncation-error}
    \sup_{t\in\cD} \|F_t \Phi^* P_{\leq j_0}^{\perp}x^{\dagger}\|_{\cH_2}\leq r,\qquad
    \|P_{\leq j_0}^{\perp} x^{\dagger}\|_2\leq r,
\end{equation}
where $x^{\dagger}=\Phi{u^{\dagger}}$ and $r\geq 0$. Notice that, if $\sup_{t\in\cD} \|F_t\| \leq C_F$, then the first condition in \eqref{eq:ass-truncation-error} is satisfied with $\lc \max(C_F,1) \rc r$ in place of $r$.

\item The second is a nonuniform bound, given by
\begin{equation}
\label{eq:noise_nonunif_bound}
    \max_k \|f_{\nu}(t_k)^{-1/2}\varepsilon_k\|_{\cH_2} \leq \beta.
\end{equation}
This weighted condition implies that the noise has to be smaller for the measurements corresponding to the samples $t_k$ that are chosen with a lower probability. In this case, the assumption on the truncation error becomes
\begin{equation}
\label{eq:ass-truncation-error2}
    \sup_{t\in\cD} f_{\nu}(t)^{-1/2} \|F_t \Phi^* P_{\leq j_0}^{\perp}x^{\dagger}\|_{\cH_2}\leq r,\qquad
    \|P_{\leq j_0}^{\perp} x^{\dagger}\|_2\leq r.
\end{equation}
where $x^{\dagger}=\Phi{u^{\dagger}}$ and $r\geq 0$. While the first bound in \eqref{eq:ass-truncation-error2} may appear abstract at first glance, it arises naturally in applications. Indeed, this condition is typically satisfied when the truncation error $P_{\leq j}^{\perp}x^{\dagger}$ exhibits appropriate decay as $j\rightarrow +\infty$, which is guaranteed by standard regularity assumptions on the signal --- see Remark~\ref{rem:illfour-nonunif-trunc} for a detailed example.
\end{itemize}

\subsection{Abstract result} We consider the following assumptions.
\begin{assumption}
\label{ass:quasi-diag}
    The following \textit{quasi-diagonalization property} holds:
    \begin{equation}
    \label{eq:quasi-diag}
        c \sum_{(j,n)\in\Gamma} 2^{-2bj}|x_{j,n}|^2 \leq
        \|F\Phi^* x\|_{L_{\mu}^2(\cD;\cH_2)}^2 \leq
        C \sum_{(j,n)\in\Gamma} 2^{-2bj}|x_{j,n}|^2,\qquad x\in \ell^2(\Gamma),
    \end{equation}
    for some $c,C>0$ and $b\geq 0$.
\end{assumption}

The parameter $b\ge 0$ quantifies the level of smoothing of the forward map $F$ and, consequently, the ill-posedness of the inverse problem.

\begin{assumption}
\label{ass:coherence}
    The following coherence bound is satisfied:
    \begin{align*}
        \|F_t\phi_{j,n}\|_{\cH_2} \leq B \frac{\sqrt{f_{\nu}(t)}}{2^{dj}},\quad t\in\cD,\ (j,n)\in\Gamma,
    \end{align*}
    for some $B\geq 1$ and $0\leq d\leq b$.
\end{assumption}

\begin{assumption}
\label{ass:prob_lower_bound}
    The following bounds are satisfied for some $0<c_{\nu}\leq 1$:
    \begin{align*}
        c_{\nu} \leq f_{\nu} \leq 1.
    \end{align*}
\end{assumption}

The following result generalizes \cite[Theorem~3.11]{split1} in two directions. First, for the uniform noise bound \eqref{eq:noise_unif_bound}, we extend the original theorem by modifying the assumptions on the truncation error. Second, we handle the non-uniform noise case \eqref{eq:noise_nonunif_bound} through a transformation: we apply the uniform noise result to the modified measurement operators $F'_t = f_\nu(t)^{-1/2} F_t$. The complete proof is provided in Section~\ref{sec-proofs}.

\begin{theorem}
\label{thm:abstract}
Consider the setting introduced in Section \ref{sec:setting}. Let Assumptions \ref{ass:quasi-diag} and \ref{ass:coherence} be satisfied, and fix $j_0\in\bN$ and $\gamma\in (0,1)$.

\smallskip\noindent
Consider:
\begin{itemize}
    \item a signal $x^{\dagger}\in\ell^2(\Gamma)$;
    \item i.i.d.\ samples $t_1,\dots,t_m\in\cD$ with $t_i\sim\nu$;
    \item measurements $y_k \coloneqq F_{t_k}\Phi^* x^{\dagger}+\varepsilon_k$, for $k=1,\dots,m$;
    \item a parameter $\zeta\in[0,1]$ and weight matrix $W\coloneqq\diag(2^{bj})_{(j,n)\in\Lambda_{\leq j_0}}$, where $b$ is from Assumption \ref{ass:quasi-diag};
    \item and a sparsity parameter $s\in\bN$ with $2\leq s\leq M_{\leq j_0}$.
\end{itemize}
Define
\begin{equation}
    \tau\coloneqq B^2 \,2^{2(b-d)j_0} \, 2^{2(1-\zeta) bj_0}\, s.
\end{equation}

There exist constants $C_0,C_1,C_2,C_3>0$, depending only on $C_F$ and the quasi-diagonalization bounds in \eqref{eq:quasi-diag}, for which the following results hold.

\medskip\noindent
\textbf{Uniform bound case.} 
Suppose that:
\begin{enumerate}[label=(\roman*)]
    \item Assumption \ref{ass:prob_lower_bound} is satisfied;
    \item $\varepsilon_k$ satisfy \eqref{eq:noise_unif_bound};
    \item and $x^{\dagger}$ satisfies \eqref{eq:ass-truncation-error} for some $r\geq 0$.
\end{enumerate}
Let $\widehat{x}$ be a solution of:
\begin{equation}
\label{eq:thm-min-problem}
    \min_{x\in\ell^2(\Lambda_{\leq j_0})} \|W^{-\zeta}x\|_1 :\quad \frac{1}{m}\sum_{k=1}^m \|F_{t_k}\Phi^*\iota_{\leq j_0}x-y_k\|_{\cHt}^2  \leq \lc \beta + C_3 c_{\nu}^{-1/2} 2^{-bj_0}r \rc^2.
\end{equation}
If
\begin{equation}
\label{eq:sample-complexity}
    m \geq C_0\tau\max\{\log^3{\tau}\log{M_{\le j_0}},\log(1/\gamma)\},
\end{equation}
then, with probability exceeding $1-\gamma$, the following recovery estimates hold:
\begin{align}
\label{eq:rec-est-main-thm-1}
    \|W^{-\zeta}x^{\dagger}-W^{-\zeta}\widehat{x}\|_2 &\leq C_1 \frac{\sigma_s(W^{-\zeta}P_{\leq j_0}x^{\dagger})_1}{\sqrt{s}} + C_2 2^{-\zeta b j_0} c_{\nu}^{-1/2} (2^{b j_0}\beta+c_{\nu}^{-1/2} r),\\
\label{eq:rec-est-main-thm-2}
    \|x^{\dagger}-\widehat{x}\|_2 &\leq C_1 2^{\zeta b j_0} \frac{\sigma_s(W^{-\zeta}P_{\leq j_0}x^{\dagger})_1}{\sqrt{s}} + C_2 c_{\nu}^{-1/2} \lc 2^{b j_0}  \beta+ c_{\nu}^{-1/2} r \rc.
\end{align}

\medskip\noindent
\textbf{Nonuniform bound case.} 
Suppose that:
\begin{enumerate}[label=(\roman*)]
    \item $\varepsilon_k$ satisfy \eqref{eq:noise_nonunif_bound};
    \item and $x^{\dagger}$ satisfies \eqref{eq:ass-truncation-error2} for some $r\geq 0$.
\end{enumerate}
Let $\widehat{x}$ be a solution of:
\begin{equation}
    \min_{x\in\ell^2(\Lambda_{\leq j_0})} \|W^{-\zeta}x\|_1 :\quad \frac{1}{m}\sum_{k=1}^m f_{\nu}(t_k)^{-1}\|F_{t_k}\Phi^*\iota_{\leq j_0}x-y_k\|_{\cHt}^2  \leq \lc \beta + C_3 2^{-bj_0}r \rc^2.
\end{equation}
If $m$ is given by \eqref{eq:sample-complexity}, then, with probability exceeding $1-\gamma$, the following recovery estimates hold:
\begin{align}
    \|W^{-\zeta}x^{\dagger}-W^{-\zeta}\widehat{x}\|_2 &\leq C_1 \frac{\sigma_s(W^{-\zeta}P_{\leq j_0}x^{\dagger})_1}{\sqrt{s}} + C_2 2^{-\zeta b j_0} (2^{b j_0}\beta+ r),\\
    \|x^{\dagger}-\widehat{x}\|_2 &\leq C_1 2^{\zeta b j_0} \frac{\sigma_s(W^{-\zeta}P_{\leq j_0}x^{\dagger})_1}{\sqrt{s}} + C_2 \lc 2^{b j_0}  \beta+ r \rc.
\end{align}
\end{theorem}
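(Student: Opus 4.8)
The plan is to reduce everything to a single robust recovery estimate for a weighted $\ell^1$ program, exactly along the lines of the proof of \cite[Theorem~3.11]{split1}. The engine has two ingredients: a restricted isometry property (RIP) for a suitably rescaled version of the random sampling operator $\mathcal{A}\colon\ell^2(\Lambda_{\leq j_0})\to(\cHt)^m$, $(\mathcal{A}x)_k=m^{-1/2}F_{t_k}\Phi^*\iota_{\leq j_0}x$, and a standard quadratically constrained $\ell^1$ recovery bound. I would prove the uniform case directly, and then obtain the nonuniform case from it by the substitution $F'_t=f_\nu(t)^{-1/2}F_t$. Note that $\tfrac1m\sum_k\|F_{t_k}\Phi^*\iota_{\leq j_0}x-y_k\|_{\cHt}^2=\|\mathcal{A}x-\tilde y\|^2$ with $\tilde y=(m^{-1/2}y_k)_k$, so the constraints in the statement are quadratic constraints on $\mathcal{A}$.

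For the uniform case, the key computation is $\bE_{t\sim\nu}[\|F_t\Phi^*\iota_{\leq j_0}x\|_{\cHt}^2]=\int\|F_t\Phi^*\iota_{\leq j_0}x\|_{\cHt}^2 f_\nu\,\diff\mu$, which by Assumption~\ref{ass:prob_lower_bound} and the quasi-diagonalization bounds \eqref{eq:quasi-diag} lies between $c\,c_\nu$ and $C$ times $\sum_{(j,n)\in\Lambda_{\leq j_0}}2^{-2bj}|x_{j,n}|^2=\|W^{-1}x\|_2^2$; hence $\bE[(\mathcal{A}W^\zeta)^*(\mathcal{A}W^\zeta)]$ is comparable to $\diag(2^{-2(1-\zeta)bj})_{(j,n)\in\Lambda_{\leq j_0}}$, whose smallest eigenvalue, at scale $j=j_0$, is of order $c_\nu\,2^{-2(1-\zeta)bj_0}$. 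The coherence bound (Assumption~\ref{ass:coherence}), $\|F_t\phi_{j,n}\|_{\cHt}^2\le B^2 f_\nu(t)2^{-2dj}$, controls the per-sample operator norms and is precisely what feeds a matrix Bernstein / Rudelson--Vershynin concentration argument, promoting the expected behaviour to a uniform RIP on $s$-sparse vectors, valid with probability $\ge 1-\gamma$ once $m$ satisfies \eqref{eq:sample-complexity}: the sample complexity $\tau$ encodes the multiscale local coherence $B^2 2^{2(b-d)j_0}$ together with the conditioning factor $2^{2(1-\zeta)bj_0}$ of the weighted Gram across scales $0,\dots,j_0$ and the sparsity $s$. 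This concentration step, together with the bookkeeping of normalization constants, is the main obstacle, and it is essentially the content of \cite[Theorem~3.11]{split1}, which I would invoke rather than reprove.

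The genuinely new step in the uniform case is feasibility of the truncation $P_{\leq j_0}x^\dagger$ under the modified assumption \eqref{eq:ass-truncation-error}. Since $y_k=F_{t_k}\Phi^*\iota_{\leq j_0}P_{\leq j_0}x^\dagger+F_{t_k}\Phi^* P_{\leq j_0}^\perp x^\dagger+\varepsilon_k$, the residual of $P_{\leq j_0}x^\dagger$ is $F_{t_k}\Phi^* P_{\leq j_0}^\perp x^\dagger+\varepsilon_k$, whose noise part is bounded by $\beta$ via \eqref{eq:noise_unif_bound}. For the truncation part the crucial point is that quasi-diagonalization upgrades the $\ell^2$-smallness $\|P_{\leq j_0}^\perp x^\dagger\|_2\le r$ into genuine measurement smallness, $\|F\Phi^* P_{\leq j_0}^\perp x^\dagger\|_{L_\mu^2(\cD;\cHt)}^2\le C\sum_{j>j_0}2^{-2bj}|x^\dagger_{j,n}|^2\le C\,2^{-2bj_0}r^2$, so that $\bE_{t\sim\nu}[\|F_t\Phi^* P_{\leq j_0}^\perp x^\dagger\|_{\cHt}^2]\le C\,2^{-2bj_0}r^2$; the pointwise bound $\sup_t\|F_t\Phi^* P_{\leq j_0}^\perp x^\dagger\|_{\cHt}\le r$ then supplies the uniform control needed for a Bernstein bound on the empirical average, which stays below $(C_3 c_\nu^{-1/2}2^{-bj_0}r)^2$ once $m$ satisfies \eqref{eq:sample-complexity}. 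With $P_{\leq j_0}x^\dagger$ feasible, on the RIP event the recovery bound for $z=W^{-\zeta}x$ gives $\|W^{-\zeta}(P_{\leq j_0}x^\dagger-\widehat{x})\|_2\le C_1\sigma_s(W^{-\zeta}P_{\leq j_0}x^\dagger)_1/\sqrt{s}+C_2\kappa\eta$, with $\eta=\beta+C_3c_\nu^{-1/2}2^{-bj_0}r$ and normalization $\kappa\asymp c_\nu^{-1/2}2^{(1-\zeta)bj_0}$; expanding $\kappa\eta$ produces the second terms of \eqref{eq:rec-est-main-thm-1}. Adding the tail $\|P_{\leq j_0}^\perp x^\dagger\|_2\le r$ and using $\|W^\zeta v\|_2\le 2^{\zeta bj_0}\|v\|_2$ and $\|W^{-\zeta}v\|_2\le\|v\|_2$ on $\Lambda_{\leq j_0}$ converts \eqref{eq:rec-est-main-thm-1} into \eqref{eq:rec-est-main-thm-2}, the factor $2^{\zeta bj_0}$ arising exactly from passing from the weighted to the unweighted norm.

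Finally, the nonuniform case reduces to the uniform one via $F'_t\coloneqq f_\nu(t)^{-1/2}F_t$, regarded as a forward map into $L_\nu^2(\cD;\cHt)$. A direct computation gives $\|F'\Phi^* x\|_{L_\nu^2(\cD;\cHt)}^2=\int f_\nu^{-1}\|F_t\Phi^* x\|_{\cHt}^2 f_\nu\,\diff\mu=\|F\Phi^* x\|_{L_\mu^2(\cD;\cHt)}^2$, so $F'$ satisfies Assumption~\ref{ass:quasi-diag} with the same $c,C,b$; moreover $\|F'_t\phi_{j,n}\|_{\cHt}=f_\nu(t)^{-1/2}\|F_t\phi_{j,n}\|_{\cHt}\le B\,2^{-dj}$, i.e.\ Assumption~\ref{ass:coherence} holds for $F'$ with density $f'_\nu\equiv 1$ and the same $B,d$, whence Assumption~\ref{ass:prob_lower_bound} holds trivially with $c_\nu=1$. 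Setting $y'_k\coloneqq f_\nu(t_k)^{-1/2}y_k=F'_{t_k}\Phi^* x^\dagger+\varepsilon'_k$ with $\varepsilon'_k\coloneqq f_\nu(t_k)^{-1/2}\varepsilon_k$, the bound \eqref{eq:noise_nonunif_bound} becomes \eqref{eq:noise_unif_bound} for $\varepsilon'_k$ and \eqref{eq:ass-truncation-error2} becomes \eqref{eq:ass-truncation-error} for $F'$; finally $f_\nu(t_k)^{-1}\|F_{t_k}\Phi^*\iota_{\leq j_0}x-y_k\|_{\cHt}^2=\|F'_{t_k}\Phi^*\iota_{\leq j_0}x-y'_k\|_{\cHt}^2$, so the two minimization problems coincide. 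Applying the uniform case to $F'$ with $c_\nu=1$ yields precisely the stated nonuniform estimates. This transformation, and the feasibility computation above, are the only new pieces; the heavy concentration machinery is imported from \cite{split1}.
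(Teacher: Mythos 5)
Your proposal is correct and follows essentially the same route as the paper: the uniform case rests on \cite[Theorem~3.11]{split1} (with the observation that the modified truncation-error assumption \eqref{eq:ass-truncation-error} can replace the uniform bound $\sup_{t\in\cD}\|F_t\|\le C_F$ used there), and the nonuniform case is reduced to the uniform one via the identical rescaling $F'_t=f_\nu(t)^{-1/2}F_t$, with the same verifications --- preservation of the quasi-diagonalization norm under $\diff\mu'=f_\nu\,\diff\mu$, coherence with density $f'_\nu\equiv 1$, Assumption~\ref{ass:prob_lower_bound} holding trivially with $c'_\nu=1$, transformation of the noise and truncation conditions, and the matching of the two minimization problems. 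Your additional sketch of the RIP/feasibility mechanics goes beyond what the paper writes, but since both you and the paper ultimately import that machinery from \cite[Proposition~5.8 and Theorem~3.11]{split1}, the approaches coincide.
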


\begin{remark}
    We notice for further convenience that Assumption \ref{ass:quasi-diag} can be relaxed in the hypotheses of Theorem \ref{thm:abstract} -- see \cite[Remark 5.12]{split1}. For a fixed scale index $j_0\in\bN$, it suffices that $F$ satisfies the following properties:
    \begin{equation}
    \label{eq:weak-quasi-diag1}
        \|F\Phi^*x\|_2^2 \leq C'\sum_{(j,n)\in\Gamma} 2^{-2bj} |x_{j,n}|^2,\qquad x\in\ell^2(\Gamma),
    \end{equation}
    \begin{equation}
    \label{eq:weak-quasi-diag2}
        c'\sum_{(j,n)\in\Lambda_{\leq j_0}} 2^{-2bj}|x_{j,n}|^2 \leq \|F\Phi^* x\|_2^2,\qquad x\in\ell^2(\Lambda_{\leq j_0})
    \end{equation}
    for some constants $c',C'>0$. In this case, we say that  $F$ satisfies the \textit{weak quasi-diagonalization property} with respect to $(\Phi,j_0,b)$.
\end{remark}

\begin{remark}
\label{rem:ass_coherence_mod}
    It is also possible to weaken Assumption \ref{ass:coherence}. Indeed, the following bound suffices:
    \begin{equation}
    \label{eq:ass_coherence_mod}
        \|F_t\phi_{j,n}\|_{\cH_2} \leq B \frac{\sqrt{f_{\nu}(t)}}{2^{dj}},\quad t\in\cD,\ (j,n)\in\Lambda_{\leq j_0},
    \end{equation}
    for some $B\geq 1$ and $0\leq d\leq b$. The availability of a weaker version of Assumption~\ref{ass:coherence} is crucial when one wishes to extend Theorem \ref{thm:abstract} to cases where a bound with a constant $B\geq 1$ that is uniform with respect to the scale index $j$ is not available. In these cases, $B$  depends on the maximum resolution $j_0$, which in turn implies an additional implicit dependence of the sample complexity on $j_0$.
\end{remark}

\subsection{Balancing property}
\label{sec:balancing}
In many cases of interest, the forward map $F$ used in the reconstruction procedure has the form $P\circ U$, where 
\[
U\colon\cH_1\longrightarrow \cH,
\]
can be viewed as the \textit{natural} forward map of the model, whose codomain is some Hilbert space $\cH$ and $P\in\cL(\cH)$ is a suitable projection operator whose range can be identified with $L^2_{\mu}(\cD;\cH_2)$, namely, the codomain of $F$. We can also have $\cH=L^2_{\mu}(\Sigma;\cH_2)$, where $(\Sigma,\mu)$ is an extension of the measure space $(\cD,\mu)$. In this case, the projection $P$ corresponds to the truncation operator
\begin{align*}
    P\colon u\mapsto Pu(x)=
    \begin{cases}
        u(x) & \text{if $x\in\cD$}\\
        0 & \text{otherwise},
    \end{cases}
\end{align*}
but we stress that this is not to be expected in general --- see, for instance, Example~\ref{ex:radon_trunc}.

%Let us now examine two examples where the decomposition $F=P\circ U$ holds and the interpretation of $U$ as the natural forward map is fairly elucidated.

We now present two examples that illustrate the decomposition $F=P\circ U$, where $U$ represents the natural forward map of the physical problem and $P$ is the projection operator arising from practical measurement constraints.

\begin{example}[\textbf{Reconstruction of a signal via Fourier samples}]
\label{ex:fou-wav}
Consider the problem of reconstructing a signal $u^{\dagger}\in\cH_1\coloneqq L^2(0,1)$ from samples of its Fourier coefficients $\widehat{u^{\dagger}}(t)\in\cH_2\coloneqq \bC$ for $t\in\bZ$. The natural forward map of the problem coincides with the Fourier transform $U=\cF\colon L^2(0,1)\rightarrow \ell^2(\bZ)$, which is unitary. Note that the codomain is given by $\ell^2(\bZ)=L_{\mu}^2(\Sigma)$, where $\Sigma=\bZ$ and $\mu$ is the counting measure.

However, in practical applications, it is necessary to restrict the space of measurements to a finite number of frequencies. In other words, we need to consider  a random sampling of the frequencies in $[N]_{\pm}\coloneqq\{-N,-N+1,\dots,N-1,N\}$ for some fixed $N\in\bN$. In this case, the forward map that models the problem is given by the \textit{truncated} Fourier transform $F\colon L^2(0,1)\rightarrow \ell^2([N]_{\pm})$, which can be viewed as the composition of $U$ with the projection $P_N\colon\ell^2(\bZ)\rightarrow\ell^2([N]_{\pm})$ on the corresponding frequency bandwidth: $F=P_N\circ U$.

% For the reconstruction of a sparse signal with respect to a wavelet dictionary $(\phi_{j,n})_{(j,n)\in\Gamma}$ of $L^2(0,1)$, the necessity of restricting the sampling space to a finite number of frequencies can also be seen from Assumption \ref{ass:coherence}. Indeed, it is known from \cite[Theorem 2.1]{JAH} that the following estimates hold under suitable regularity assumptions on the scaling functions of the wavelet dictionary $(\phi_{j,n})_{(j,n)\in\Gamma}$:
% \begin{equation}
% \label{eq:fou-wav-coherence}
% \begin{split}
%     |\cF{\phi_{j,n}}(0)| &\leq C,\quad (j,n)\in\Gamma,\\
%     |\cF{\phi_{j,n}}(t)| &\leq \frac{C}{\sqrt{|t|}},\quad t\in\bZ\setminus\{0\},\ (j,n)\in\Gamma.
% \end{split}
% \end{equation}
% This bound is essentially optimal, in a sense that can be made precise. Therefore, the minimal probability density $f_{\nu}(t)$ that can satisfy Assumption~\ref{ass:coherence} (note that $d=b=0$ in this case), namely such that
% \begin{align*}
%     |\cF{\phi_{j,n}}(t)| \leq B\sqrt{f_{\nu}(t)},
% \end{align*}
% must be proportional to $1/|\cdot|\not\in\ell^1(\bZ)$. It is therefore not possible to normalize the sequence to obtain a probability density with respect to the counting measure, hence showing the necessity to restrict the sampling space.
For reconstruction using a wavelet dictionary $(\phi_{j,n})_{(j,n)\in\Gamma}$ of $L^2(0,1)$, the necessity of finite sampling follows from Assumption \ref{ass:coherence}. Indeed, \cite[Theorem 2.1]{JAH} shows that under suitable regularity assumptions on the wavelets:
\begin{equation}
\label{eq:fou-wav-coherence}
    |\cF{\phi_{j,n}}(0)| \leq C,\quad
    |\cF{\phi_{j,n}}(t)| \leq \frac{C}{\sqrt{|t|}}\quad \text{for }t\in\bZ\setminus\{0\},
\end{equation}
where $(j,n)\in\Gamma$ and this bound is essentially optimal. 

Consequently, the minimal probability density $f_{\nu}(t)$ satisfying Assumption~\ref{ass:coherence} with $d=b=0$ (that is, satisfying $|\cF{\phi_{j,n}}(t)| \leq B\sqrt{f_{\nu}(t)}$) must be proportional to $1/|\cdot|$. Since this function is not in $\ell^1(\bZ)$, it cannot be normalized to a probability density with respect to the counting measure, demonstrating why the sampling space must be restricted.
\end{example}

\begin{remark}
    The lack of summability in Example \ref{ex:fou-wav}, hence the need of a suitable truncation, is an unavoidable phenomenon that manifests itself every time one is concerned with reconstructing the sparse coefficients of a signal $u$ with respect to an orthonormal basis $(\phi_i)_{i\in\bN}$ of a Hilbert space $\cH$ by sampling the coefficients $F_tu=\langle u,\psi_t\rangle_\cH$ with respect to another orthonormal basis $(\psi_t)_{t\in\bN}$ \cite{AH1, AHC}. Indeed, consider the (asymptotic) coherence
    \begin{equation}
        \mu_t \coloneqq \sup_{i\in\bN} |\langle \psi_t,\phi_i \rangle_\cH|=\sup_{i\in\bN} |F_t\phi_i|, \quad t \in \bN,
    \end{equation}
    and suppose by contradiction that $\sum_{t\in \bN} \mu_t^2<\infty$. Then, there exists $N\in\bN$ such that $\sum_{t>N}\mu_t^2\leq 1/2$. Let $P_N$ be the orthogonal projection on $\Span(\psi_t)_{t=1}^N$. We have
    \begin{align*}
        1 = \|\phi_i\|_\cH^2 &=
        \sum_{t \ge 1} |\langle \psi_t,\phi_i \rangle|^2  =
        \sum_{t\leq N} |\langle \psi_t,\phi_i \rangle|^2 + 
        \sum_{t>N} |\langle \psi_t,\phi_i \rangle|^2 \leq 
        \sum_{t\leq N} |\langle \psi_t,\phi_i \rangle|^2 + \frac 1 2 ,
    %    \\ & = \|P_N\phi_i\|_\cH^2 + \frac 1 2,
    \end{align*}
    which implies that
    \begin{equation}
    \label{eq-rem-coherence}
        \|P_N\phi_i\|_{\cH}^2 \geq \frac 1 2,\quad i\in\bN.
    \end{equation}
    On the other hand, since $\phi_i \to 0$ weakly and $P_N$ is a compact operator, we have $\|P_N\phi_i\|_\cH\rightarrow 0$. We conclude that $\sum_t \mu_t^2 = \infty$, as claimed. 
    
    If a coherence bound like
    \begin{align*}
        |\langle \psi_t,\phi_i \rangle|^2 \leq B^2 f_{\nu}(t),\qquad i,t\in\bN,
    \end{align*}
    were available, $f_{\nu}(t)$ being a probability density with respect to the counting measure, then after taking the supremum over $i$ we would get
    \begin{align*}
        \mu_t^2 \leq B^2 f_{\nu}(t),
    \end{align*}
   hence leading to a contradiction:
    \begin{align*}
        +\infty = \sum_{t\in\bN} \mu_t^2 \leq B^2 \sum_{t\in\bN} f_{\nu}(t) = B^2 < +\infty.
    \end{align*}
     We thus conclude that resorting to the balancing property is necessary in this framework.
\end{remark}

\begin{example}[\textbf{The truncated Radon transform}]
\label{ex:radon_trunc}
Consider the problem of reconstructing a signal $u^\dagger \in L^2(\cB_1)$ from samples of its Radon transform at fixed angles, where $\cB_1\subset\bR^2$ is the open unit ball. For each direction $\theta$, the Radon transform $\cR_\theta\colon L^2(\cB_1)\rightarrow L^2(\bR)$ is defined by
\[
(\cR_\theta u)(s) = \int_{\theta^{\perp}} u(y+s e_\theta) \diff{y},
\]
where $e_{\theta}=(\cos\theta,\sin\theta)$ --- see \cite{natterer}. The samples are drawn from $\lc \cR_{\theta}{u^{\dagger}} \rc_{\theta\in[0,2\pi)}$. 

The Radon transform 
\[
U=\cR\colon L^2(\cB_1)\rightarrow L^2([0,2\pi)\times\bR), \quad (\cR u)(\theta,s)=(\cR_\theta u)(s)
\]
serves as the natural forward map. For details on the Radon inversion with subsampled angles, see \cite{split1}.
The Fourier slice theorem establishes that
\begin{align*}
    \cF_1\cR_{\theta}{u}(\sigma) = \cF_2u(\sigma e_\theta),\qquad \sigma\in\bR,
\end{align*}
where $\cF_1$ and $\cF_2$ are the one-dimensional and two-dimensional Fourier transforms. Thus, $\cR_{\theta}{u}$ coincides spectrally with the restriction of $\cF_2u$ to the radial line spanned by $e_\theta$, making the Radon transform suitable for modeling radial line sampling in MRI.

In applications, measurements are bandwidth-limited: for each $\theta\in[0,2\pi)$, we can only sample $\cF_2u$ on the radial segment 
\[
\ell_{\theta,N}\coloneqq\{\sigma e_\theta\colon\ \sigma\in[-N,N]\}
\] 
for some $N\in[0,+\infty)$. This limitation appears both in MRI sampling and in the sparse angle Radon transform, where $\cR_{\theta}u^{\dagger}$ is only accessible up to finite resolution.

The corresponding forward map becomes 
\[
F=P_{\leq N,\sigma}\cF_{1,s} U,
\]
where $\cF_{1,s}$ is the Fourier transform in the $s$ variable and 
\[
P_{\leq N,\sigma}\colon L^2([0,2\pi)\times\bR)\rightarrow L^2([0,2\pi)\times[-N,N])
\]
projects onto functions supported in $[0,2\pi)\times[-N,N]$.

For the related problem of sampling the Fourier transform along radial lines in the superresolution setting, we refer to \cite{dossal2017}.
\end{example}

% Let us come back to the general framework. We now introduce a \textit{balancing property} condition that is similar to other conditions that have appeared in the compressed sensing literature --- see, for instance, \cite{adcock2016generalized}.

Returning to the general framework, we introduce a balancing property --- a condition essential for controlling truncation errors in infinite-dimensional problems. This property, which ensures stability in the transition from infinite to finite-dimensional settings, is analogous to conditions that have appeared previously in the compressed sensing literature \cite{adcock2016generalized}.

\begin{definition}
Given a projection $P\in\cL\lc \cH \rc$, $U\in\cL\lc \cH_1,\cH \rc$, $j_0\in\bN$, $\theta>0$ and $b\geq 0$, we say that $P$ satisfies the \textit{balancing property} with respect to $(U,\Phi,j_0,b,\theta)$ if
\begin{equation}
\label{eq:balancing}
    \|P^\bot U\Phi^*\iota_{\leq j_0}\|_{\cH_1\to \cH}^2 \leq \theta^2 2^{-2bj_0},
\end{equation}
where $P^\perp = I-P$.
\end{definition}

In some situations it is useful to single out a projection $P$ among families like $(P_N)_{N\in\bN}$ in such a way that $U$ satisfies \eqref{eq:balancing} with respect to $P$. This is the case, for instance, of the previous two examples, where the family of projections are given by $P_N$ and $P_{\le N,\sigma}\circ\cF_{1,s}$, respectively.

Suppose that
\begin{align*}
    P_N \xrightarrow{\ \mathrm{str}\ } I
\end{align*}
with respect to the strong operator convergence as $N\rightarrow \infty$ namely $P_N{x}\rightarrow x$ as $N\rightarrow\infty$ for every $x\in\cH_1$. Fix $j_0\in\bN$, $\theta>0$ and $b\geq 0$. Then there exists $\overline{N}=\overline{N}(j_0,\theta,b)$ such that $P_{\overline{N}}$ satisfies the balancing property with respect to $(U,\Phi,j_0,b,\theta)$. Indeed, equicontinuity and pointwise convergence of $P_N^\bot U\Phi^*$ to $0$ implies uniform convergence on the compact set $\{x\in\ell^2(\Lambda_{\leq j_0})\colon\ \|x\|_2\leq 1\}$.

The following result states that if $U$ satisfies the quasi-diagonalization property and $P$ satisfies the balancing property, then the truncated map $F=P\circ U$ satisfies the weak quasi-diagonalization property. The proof is postponed to Section~\ref{sec-proofs}.

\begin{proposition}
\label{prop:balancing}
    Suppose that $U\colon\cH_1\rightarrow \cH$ satisfies the quasi-diagonalization property \eqref{eq:quasi-diag} with respect to $(\Phi,b)$ with constants $c,C>0$ and suppose that $P$ satisfies the balancing property \eqref{eq:balancing} with respect to $(U,\Phi,j_0,b,\theta)$ with $\theta^2\leq c/2$. Then $F=P\circ U$ satisfies the weak quasi-diagonalization property \eqref{eq:weak-quasi-diag1} and \eqref{eq:weak-quasi-diag2} with respect to $(\Phi,j_0,b)$ with constants $C'=C$ and $c'=c/2$, respectively.
\end{proposition}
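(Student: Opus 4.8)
The plan is to treat the two bounds separately, obtaining the upper bound \eqref{eq:weak-quasi-diag1} essentially for free and reserving the actual work for the lower bound \eqref{eq:weak-quasi-diag2}. For the upper bound I would simply invoke that $P$ is a norm-one orthogonal projection, so that for every $x\in\ell^2(\Gamma)$
\begin{equation*}
    \|F\Phi^* x\|_{\cH}^2 = \|PU\Phi^* x\|_{\cH}^2 \leq \|U\Phi^* x\|_{\cH}^2 \leq C\sum_{(j,n)\in\Gamma} 2^{-2bj}|x_{j,n}|^2,
\end{equation*}
where the last inequality is the upper quasi-diagonalization bound for $U$. This yields \eqref{eq:weak-quasi-diag1} with $C'=C$ on all of $\ell^2(\Gamma)$.

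For the lower bound I would fix $x\in\ell^2(\Lambda_{\leq j_0})$ and exploit the orthogonality of $P$ through the Pythagorean identity
\begin{equation*}
    \|U\Phi^* x\|_{\cH}^2 = \|PU\Phi^* x\|_{\cH}^2 + \|P^\bot U\Phi^* x\|_{\cH}^2,
\end{equation*}
which rearranges into $\|F\Phi^* x\|_{\cH}^2 = \|U\Phi^* x\|_{\cH}^2 - \|P^\bot U\Phi^* x\|_{\cH}^2$. I would then estimate the two terms from opposite directions: the first from below by the lower quasi-diagonalization bound for $U$, namely $\|U\Phi^* x\|_{\cH}^2 \geq c\sum_{(j,n)\in\Lambda_{\leq j_0}} 2^{-2bj}|x_{j,n}|^2$ (the sum being restricted to $\Lambda_{\leq j_0}$ because $x$ is supported there), and the second from above by the balancing property, $\|P^\bot U\Phi^* x\|_{\cH}^2 \leq \theta^2 2^{-2bj_0}\|x\|_2^2$.

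The key step — and the only place where something must be arranged with care — is to absorb the balancing term, which is phrased through the unweighted norm $\|x\|_2^2$, into the weighted quantity controlling the lower bound. This is precisely where the restriction to $\Lambda_{\leq j_0}$ is essential: for $(j,n)\in\Lambda_{\leq j_0}$ one has $j\leq j_0$ and $b\geq 0$, hence $2^{-2bj}\geq 2^{-2bj_0}$, so that
\begin{equation*}
    2^{-2bj_0}\|x\|_2^2 = \sum_{(j,n)\in\Lambda_{\leq j_0}} 2^{-2bj_0}|x_{j,n}|^2 \leq \sum_{(j,n)\in\Lambda_{\leq j_0}} 2^{-2bj}|x_{j,n}|^2.
\end{equation*}
Combining the three displays gives $\|F\Phi^* x\|_{\cH}^2 \geq (c-\theta^2)\sum_{(j,n)\in\Lambda_{\leq j_0}} 2^{-2bj}|x_{j,n}|^2$, and the hypothesis $\theta^2\leq c/2$ finally yields $c-\theta^2\geq c/2$, establishing \eqref{eq:weak-quasi-diag2} with $c'=c/2$.

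I do not anticipate a genuine obstacle: the argument is short once the orthogonal splitting is in place. The one point worth double-checking is that $P$ is an orthogonal projection, so that both $\|P\|\leq 1$ and the Pythagorean identity are available; this is the case in all the examples of interest, such as the truncations $P_N$ and $P_{\leq N,\sigma}\circ\cF_{1,s}$. Were $P$ merely an idempotent with $\|P\|>1$, both the upper-bound step and the decomposition would have to be reconsidered.
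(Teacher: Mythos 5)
Your proposal is correct and follows essentially the same route as the paper's proof: the upper bound via $\|P\|\leq 1$ combined with the upper quasi-diagonalization estimate, and the lower bound via the orthogonal splitting $\|F\Phi^*x\|_{\cH}^2=\|U\Phi^*x\|_{\cH}^2-\|P^{\perp}U\Phi^*x\|_{\cH}^2$, the balancing property, and the monotonicity $2^{-2bj}\geq 2^{-2bj_0}$ for $(j,n)\in\Lambda_{\leq j_0}$. The only cosmetic difference is that you carry $\theta^2$ symbolically and absorb it at the end via $c-\theta^2\geq c/2$, whereas the paper substitutes $\theta^2\leq c/2$ immediately; your closing remark that orthogonality of $P$ is genuinely needed (for the Pythagorean identity) is a valid observation that the paper leaves implicit.
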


\subsection{Choice of the probability distribution}
\label{sec:prob_distr}

In many applications, measurement operators satisfy coherence bounds of the form
\begin{equation}
\label{eq:coherence-natural}
    \|F_t{\phi_{j,n}}\|_{\cH_2} \leq \frac{\sqrt{g_{\nu}(t)}}{2^{dj}},\qquad t\in\cD,\ (j,n)\in\Gamma,
\end{equation}
where $g_{\nu}$ is a positive function and $0\leq d\leq b$, with $b$ being the quasi-diagonalization parameter of the natural forward map $U$. This structure appears, for instance, in the reconstruction of wavelet coefficients from Fourier samples (see Example \ref{ex:fou-wav} and \eqref{eq:fou-wav-coherence}).
For fixed $0\leq d\leq b$, the minimal function $g_{\nu}$ satisfying \eqref{eq:coherence-natural} is
\begin{equation}
\label{ass:coherence_gnu}
    g_{\nu}(t) \coloneqq \sup_{(j,n)\in\Gamma} 2^{2dj}\|F_t{\phi_{j,n}}\|_{\cH_2}^2.
\end{equation}
When $g_{\nu}\in L^1(\mu)$, we can normalize it to obtain a probability density $f_{\nu}$ with respect to $\mu$ by setting
\[
    f_{\nu} = C_{\nu}^{-1}g_{\nu}, \quad \text{where} \quad C_{\nu} = \int_{\cD} g_{\nu}\diff{\mu}.
\]
This allows us to rewrite \eqref{eq:coherence-natural} in the form required by Assumption \ref{ass:coherence}:
\begin{equation}
\label{eq:coherence}
    \|F_t{\phi_{j,n}}\|_{\cH_2} \leq B\frac{\sqrt{f_{\nu}(t)}}{2^{dj}},
\end{equation}
with $B=\sqrt{C_{\nu}}$. The recovery guarantees of Theorem \ref{thm:abstract} then apply.

While multiple choices of $B$, $f_{\nu}$, and $d$ may satisfy \eqref{eq:coherence}, optimal sample complexity is achieved by:
\begin{itemize}
    \item choosing $d$ as the largest value in $[0,b]$ for which \eqref{eq:coherence-natural} holds,
    \item given $d$, taking $f_{\nu}$ proportional to the minimal $g_{\nu}$ from \eqref{ass:coherence_gnu}.
\end{itemize}
This optimality is demonstrated below through a classical compressed sensing example.

\begin{example}[\textbf{Optimal sampling for Fourier reconstruction}]
\label{ex:fouwave-continues}
We return to the problem of reconstructing wavelet coefficients of $u^{\dagger}\in L^2(0,1)$ from its Fourier samples $\cF{u}(t)$. As noted in Example \ref{ex:fou-wav}, we have the optimal bound
\begin{equation}
\label{eq:fou-wav-coherence2}
    |\cF{\phi_{j,n}}(t)| \leq \frac{B_0}{\sqrt{|t|}}.
\end{equation}
Since $t\mapsto 1/|t|$ is not in $\ell^1(\bZ\setminus\{0\})$, we require a balancing property. For the projection $P_N\in\cL(\ell^2(\bZ))$ onto $[N]_{\pm}=\{-N,\ldots,N\}$, it is proved in \cite{AHP2, AHC} that under suitable wavelet assumptions, for every $\theta\in(0,1)$ there exists $C_{\theta}$ such that $P_{N_{j_0}}$ satisfies the balancing property with respect to $(\cF,\Phi,j_0,0,\theta)$ for all $j_0\in\bN$, where $N_{j_0}=\lfloor C_{\theta}2^{j_0} \rfloor$.

By Proposition~\ref{prop:balancing}, the restricted Fourier transform 
\[
\cF_{j_0}\colon\Span(\phi_{j,n})_{(j,n)\in\Lambda_{\leq j_0}}\rightarrow\ell^2([N_{j_0}]_{\pm})
\]
satisfies the weak quasi-diagonalization property \eqref{eq:quasi-diag} with $b=0$.

\smallskip\noindent
\textbf{Nonuniform sampling strategy.}
With $b=d=0$, following Section \ref{sec:prob_distr}, we can choose $f_{\nu}$ proportional to $1/|t|$:
\[
f_{\nu}(t) = \begin{cases}
    C_{\nu}^{-1} & t=0\\
    C_{\nu}^{-1}/|t| & t\neq 0
\end{cases}
\]
where
\[
    C_{\nu} = 1+\sum_{t=1}^{N_{j_0}} \frac{2}{|t|} \approx \log{N_{j_0}} +1 \leq C j_0+1.
\]
This satisfies \eqref{eq:ass_coherence_mod} with $B=C_{\nu}^{\frac12}B_0$. Note that $B\to +\infty$ as $j_0\to+\infty$, highlighting the importance of the weaker coherence bound \eqref{eq:ass_coherence_mod}.

Applying Theorem~\ref{thm:abstract} with $d=b=0$ yields stable recovery with sample complexity
\[
    m \gtrsim j_0^2 s,
\]
up to logarithmic factors in $s$ --- matching \cite[Section~3.3]{alberti2017infinite}.

\smallskip\noindent
\textbf{Uniform sampling strategy.}
Alternatively, considering uniform sampling with $f_{\nu} = 1/(2N+1)$, the optimal bound from \eqref{eq:fou-wav-coherence2} gives
\[
    |\cF{\phi_{j,n}}(t)| \leq B_0.
\]
This satisfies \eqref{eq:ass_coherence_mod} with $B=(2N+1)^{\frac12}B_0\approx C2^{j_0/2}$, leading to sample complexity
\[
    m \gtrsim 2^{j_0} j_0^2 s,
\]
up to logarithmic factors in $s$ --- clearly suboptimal compared to nonuniform sampling.
\end{example}

\subsection{Wavelet dictionary}
\label{sec:wavelet-uniform}

For the examples in Sections \ref{sec:deconv}, \ref{sec:elliptic} and \ref{sec-fou-ill}, we consider a wavelet dictionary adapted to signals in $L^2(\Omega)$, where $\Omega\subset\bR^2$ is a fixed domain. We construct this dictionary as follows.

Starting with a dictionary of $r$-regular compactly supported wavelets for $L^2(\bR^\dim)$ (see Appendix~\ref{appendix:wavelets}), we define $(\phi_{j,n})_{(j,n)\in\Gamma}$ as the subfamily whose supports intersect $\Omega$. Here:
\begin{itemize}
    \item $j\in\bN$ is the scale index,
    \item $n\in\bZ^2\times\{0,1\}^2$ encodes translation and wavelet type.
\end{itemize}
Let $\cH_1$ be the $L^2$-closure of the span:
\[
\cH_1 = \overline{\Span(\phi_{j,n})_{j,n\in\Gamma}} \subseteq L^2(\bR^\dim)
\]
and define the total support
\begin{equation}
\label{eq-K}
    K = \overline{\bigcup_{(j,n)\in\Gamma} \supp(\phi_{j,n})}.
\end{equation}
The analysis operator $\Phi\colon\cH_1\rightarrow\ell^2(\Gamma)$ is defined by
\[
\Phi u = (\langle u,\phi_{j,n}\rangle_2)_{j,n}.
\]

\section{Sparse deconvolution}
\label{sec:deconv}
In this section we consider the problem of reconstructing the wavelet coefficients of a signal given pointwise samples of a ``blurred'' version obtained by convolution with a Bessel kernel. The choice of the Bessel potential as kernel is made here in order to have a precise, explicit control on the regularity parameters and the related quantities. Nevertheless, large parts of the following analysis extend to different kernels after minor modifications.

The section is organized as follows. First, in Sec.~\ref{sec:deconv-setting} we introduce the setting for the deconvolution problem. In Sec.~\ref{sec:deconv_balancing} we verify that the balancing and the quasi-diagonalization properties are satisfied, while in Sec.~\ref{sec:deconv-coherencebounds-probdistr} we compute the coherence bounds and define the probability distribution from which samples are taken --- in this case, the sampling distribution is compactly supported in $\bR^2$. This allows us to deduce recovery estimates for the problem in Sec.~\ref{sec:deconv-recovery}. In Sec.~\ref{sec:deconv-alternative-prob} we consider samples taken from a probability distribution supported on $\bR^2$, and state the corresponding result under a stronger assumption on the noise. Finally, in Sec.~\ref{sec:deconv_cartoon_like} we apply our results to obtain explicit bounds for the reconstruction of cartoon-like images. The proofs of all the results are in Section~\ref{sec-proofs}.

\subsection{Setting}
\label{sec:deconv-setting}

We consider a $r$-regular wavelet dictionary $(\phi_{j,n})_{(j,n)}$ adapted to the unit ball $\cB_1\subset\bR^2$ and the corresponding space $\cH_1$, as described in Sec.~\ref{sec:wavelet-uniform}. We consider scalar measurements and set $\cH_2=\bC$. The natural forward map $U\colon\cH_1\rightarrow L^2(\bR^2)$ is given by $U\coloneqq (I-\Delta)^{-b/2}$, with $b>\max(2,r)$.\footnote{The condition $b>1$ is enough to ensure that the Bessel kernel is in $L^2(\bR^2)$. The more restrictive condition $b>2$ is used here for simplicity, in order to explicitly characterize the decay of the kernel, although this condition is not strictly necessary for our setting.} It is shown below that $b$ actually plays the role of the quasi-diagonalization parameter, which motivates resorting to the same symbol. The Bessel potential $(I-\Delta)^{-b/2}$ is defined via the Fourier transform as
\begin{equation}
    (I-\Delta)^{-b/2}u \coloneqq \cF^{-1}\lc (1+|\cdot|^2)^{-b/2} \cF u \rc, \quad u \in L^2(\bR^2),
\end{equation}
where $\mathcal{F}: L^2(\bR^2)\to L^2(\bR^2)$ is the two-dimensional Fourier transform given by
\[
\cF{u}(\xi) = \int_{\bR^2} e^{-2\pi i \xi \cdot x} u(x) \,\diff{x}.
\]
For simplicity of notation, we view $-\Delta$ as the Fourier multiplier with symbol $|\cdot|^2$ rather than the standard $4\pi^2|\cdot|^2$.

The operator $U$ can be expressed as a convolution:
\begin{equation}
   U{u} = (I-\Delta)^{-b/2}u = \kappa_b\ast u,
\end{equation}
where $\kappa_b\coloneqq \cF^{-1}\lc (1+|\cdot|^2)^{-b/2} \rc$ is the convolution kernel.

The following key properties hold:
\begin{enumerate}[label=(\roman*)]
    \item $\|U\|_{L^2\to L^2} = \|(1+|\cdot|^2)^{-b/2}\|_{L^{\infty}}=1$
    \item By definition,
    \begin{equation}
    \label{eq:deconv-quasi-diag}
        \|Uu\|_{L^2} = \|u\|_{H^{-b}}
    \end{equation}
\end{enumerate}

\smallskip\noindent
The second property, combined with the Littlewood-Paley characterization of Sobolev spaces (see Proposition \ref{prop:littlewood-paley} and \cite[Proposition 3.8]{split1}), implies that $U$ satisfies the quasi-diagonalization property with respect to $(\Phi,b)$.

The inverse problem that we consider here consists in reconstructing a signal $u$ given a finite number of pointwise samples of $\kappa_b\ast u$, namely $\lc \kappa_b\ast u(t_1),\dots,\kappa_b\ast u(t_m)\rc$. To model this problem, we introduce measurement operators $F_t\colon \cH_1\rightarrow\bC$, $t \in \bR^2$, defined by $F_t{u} = U{u}(t)$. Notice that $Uu$ is a continuous function, as it is given by the convolution of two functions in $L^2(\bR^2)$. Moreover, by Young's inequality, we have that $|F_t u|\leq \|\kappa_b\|_{L^2}\|u\|_{L^2}$; in other words, $(F_t)_{t\in\bR^2}$ is a family of uniformly bounded operators.

The natural space for this model is $(\Sigma,\mu) = (\bR^2,\diff{x})$, where $\diff{x}$ denotes the Lebesgue measure on $\bR^2$. This choice is natural since, even though the signal $u^\dagger$ to be reconstructed has compact support in $\cB_1$, its convolution with a non-compactly supported kernel will generally have non-compact support in $\bR^2$.

\subsection{Balancing property and quasi-diagonalization}
\label{sec:deconv_balancing}
We now investigate how to perform truncation in order to preserve the energy of the forward map. Let us anticipate that an alternative sampling scheme defined on the whole plane that does not require the balancing property can be found in Section~\ref{sec:deconv-alternative-prob}.

We consider the projections $(P_N)_{N\in\bN}$ on the spaces $L^2(\cB_N)$, with $\cB_N=\{x\in \bR^2:|x|\leq N\}$.
\begin{lemma}
\label{lem:deconv_balancing}
There exists a constant $C>0$, which depends on $b$ and on the wavelet dictionary, such that, if
\begin{align*}
    N \geq C \lc j_0 + \log(1/\theta) + 1 \rc,
\end{align*}
then $P_N$ satisfies the balancing property with respect to $(U,\Phi,j_0,b,\theta)$.
\end{lemma}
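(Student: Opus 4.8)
The plan is to exploit two structural facts: the synthesized signal $u=\Phi^*\iota_{\leq j_0}x$ is supported in the fixed compact set $K$ from \eqref{eq-K}, and the Bessel kernel $\kappa_b$ decays exponentially at infinity. Since the wavelets are compactly supported and adapted to $\cB_1$, we have $K\subseteq\cB_R$ for some radius $R\geq 1$ depending only on the dictionary, and by orthonormality $\|u\|_{L^2}=\|x\|_2$. Thus the quantity to estimate, namely $\|P_N^\bot U\Phi^*\iota_{\leq j_0}\|_{\cH_1\to\cH}^2$, is the worst-case $L^2$-mass of $\kappa_b\ast u$ outside $\cB_N$ over all unit-norm $u$ supported in $\cB_R$.

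First I would observe that for $|x|\geq N$ and $y\in\supp u\subseteq\cB_R$ one has $|x-y|\geq N-R$; combined with Cauchy--Schwarz this yields the pointwise bound $|\kappa_b\ast u(x)|^2\leq\|u\|_{L^2}^2\int_{\cB_R}|\kappa_b(x-y)|^2\,\diff y$. Integrating over $\bR^2\setminus\cB_N$ and applying Fubini together with the change of variables $z=x-y$ reduces the whole problem to the tail integral of the kernel:
\[
\|P_N^\bot U u\|_{L^2}^2 \leq |\cB_R|\,\|x\|_2^2 \int_{\{|z|\geq N-R\}} |\kappa_b(z)|^2\,\diff z.
\]
Next I would invoke the exponential decay of the Bessel kernel: for $b>2$ one has $|\kappa_b(z)|\leq C|z|^{(b-3)/2}e^{-|z|}$ for large $|z|$, so in polar coordinates the tail integral is bounded by $C\int_{N-R}^{\infty}\rho^{b-2}e^{-2\rho}\,\diff\rho\leq C(N-R)^{b-2}e^{-2(N-R)}$. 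Hence the operator norm squared is at most $C'(N-R)^{b-2}e^{-2(N-R)}$, with $C'$ depending only on $b$ and the dictionary. It then remains to force this below $\theta^2 2^{-2bj_0}$: taking logarithms, this amounts to $2(N-R)\geq 2bj_0\log 2 + 2\log(1/\theta) + (b-2)\log(N-R) + \log C'$, and since the linear term in $N$ dominates the logarithmic correction, the inequality holds once $N\geq C(j_0+\log(1/\theta)+1)$ for a suitable $C=C(b,\text{dictionary})$.

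The hard part will be the exponential decay estimate for $\kappa_b$ and the resulting tail bound, which is precisely where the hypothesis $b>2$ enters: it guarantees an explicit polynomial-times-exponential profile whose tail integral is controlled. The remaining manipulations are routine bookkeeping, but the crucial structural point worth emphasizing is that the \emph{exponential} decay in $N$ must beat the \emph{exponential} growth $2^{2bj_0}$ of the target; this is what both forces and justifies the linear dependence $N\gtrsim j_0$ in the threshold, with $\log(1/\theta)$ and the absolute constant contributing only additively.
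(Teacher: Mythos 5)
Your proof is correct and follows essentially the same route as the paper's: both exploit the compact support of $\Phi^*\iota_{\leq j_0}x$ together with the exponential decay of the Bessel kernel $\kappa_b$ to show that $\|P_N^\bot U\Phi^*\iota_{\leq j_0}\|^2$ is exponentially small in $N$, and then take logarithms to obtain the threshold $N\gtrsim j_0+\log(1/\theta)+1$. The only differences are bookkeeping: the paper derives the pointwise bound $|\kappa_b\ast u(x)|\lesssim e^{-C_2|x|}\|u\|_{L^2}$ via the splitting $e^{-C_2|x-y|}\leq e^{-C_2|x|}e^{C_2|y|}$ and integrates over $\cB_N^c$, whereas you apply Cauchy--Schwarz and Fubini to reduce to the tail of $\int|\kappa_b|^2$ using the sharper polynomial-times-exponential asymptotics.
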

To lighten the notation, we can fix $\theta=\sqrt{c/2}$, where $c$ is the lower quasi-diagonali-$\allowbreak$zation constant of $U$. Given such a choice of $N$, we can then consider $F = P_N U$ as the truncated forward map. The new sampling space for the reconstruction is given by $(\cD,\mu)=(\cB_N,\diff{x})$.

Recall that the natural forward map $U$ satisfies the quasi-diagonalization property with respect to $(\Phi,b)$ by definition. We can then invoke Proposition~\ref{prop:balancing} to conclude that $F$ satisfies the weak quasi-diagonalization property with respect to $(\Phi,j_0,b)$. Notice that, by Young's inequality,
\begin{align*}
    \|Fu\|_{L^2(\cD)} \leq \|Uu\|_{L^2} \leq \|\kappa_b\|_{L^1} \|u\|_{L^2}.
\end{align*}
We can therefore choose $C_F=\|\kappa_b\|_{L^1}$ in our setting.

\subsection{Coherence bounds and choice of the probability distribution}
\label{sec:deconv-coherencebounds-probdistr}
By Proposition~\ref{prop:app_lpnorm}, we have that
\begin{equation}
    \|(I-\Delta)^{-b/2}\phi_{j,n}\|_{\infty} \leq
    \tilde{B}_0 \frac{1}{2^{bj}} \|\phi_{j,n}\|_{\infty} \leq
    B_0 \frac{1}{2^{(b-1)j}},
\end{equation}
where $\tilde{B}_0$ and $B_0$ depend only  on the wavelet dictionary. This estimate can be written as
\begin{equation}
\label{eq:deconv_coherence}
    |F_t\phi_{j,n}|\leq B_0 \frac{1}{2^{(b-1)j}}.
\end{equation}
This bound can thus be recast in the form of \eqref{eq:coherence-natural} with $g_{\nu}=B_0^2$ and $d=b-1$. With reference to the normalization procedure outlined in Section \ref{sec:prob_distr}, we get that the previous estimate can be put in the form of Assumption~\ref{ass:coherence} with $B=B_0|\cB_N|^{1/2}$ and $f_{\nu}= 1/|\cB_N|$. Moreover, $f_{\nu}$ clearly satisfies Assumption \ref{ass:prob_lower_bound} with $c_{\nu}=1/|\cB_N|$.

Notice that, if $N=\lceil Cj_0 \rceil$, so that it satisfies the assumptions of Lemma~\ref{lem:deconv_balancing}, then
\begin{align*}
    |\cB_N| \asymp j_0^2.
\end{align*}
In particular, we get that
\begin{equation}
\label{eq:deconv-bcnu-est}
    B\asymp j_0,\qquad c_{\nu}\asymp j_0^{-2}.
\end{equation}

\subsection{Recovery estimates}
\label{sec:deconv-recovery}
Having verified all the assumptions, we can now directly apply Theorem \ref{thm:abstract} with $\zeta=1$ and with the uniform bound assumption on the noise level to deduce the following recovery estimates.

\begin{theorem}
\label{thm:deconv}
    Consider the wavelet dictionary $(\phi_{j,n})_{j,n\in\Gamma}$ defined in Sec.~\ref{sec:deconv-setting} and the corresponding analysis operator $\Phi\colon \cH_1\rightarrow\ell^2(\Gamma)$. Fix $j_0\in\bN$ and $b>2$.

    Consider a signal $u^{\dagger}\in L^2(\cB_1)$ satisfying $\|P_{\leq j_0}^{\perp} \Phi u^{\dagger}\|_2\leq r$ and let $t_1,\dots,t_m\in\cB_N$ be i.i.d.\ samples drawn from the uniform distribution on $\cB_N$, where $N=\lceil C j_0\rceil$ is chosen as in Lemma~\ref{lem:deconv_balancing}. Let $y_k \coloneqq (u^{\dagger}\ast\kappa_b)(t_k)+\varepsilon_k$ for $k=1,\dots,m$, with $|\varepsilon_k|\leq\beta$. Let $s\in\bN$ be such that $2\leq s\leq M_{\leq j_0}$, and set
    \begin{equation}
    \label{eq:tau-deconvolution}
        \tau\coloneqq j_0^2 2^{2j_0}s.
    \end{equation}
    
    There exist constants $C_0,C_1,C_2,C_3>0$, which depend only on $b$ and on the wavelet dictionary, for which the following result holds.
    
    Let $W=\diag(2^{bj})_{(j,n)\in\Lambda_{\leq j_0}}$ and let $\widehat{u}$ be a solution of the minimization problem
    \begin{align}
        \min_{u} \|W^{-1}\Phi{u}\|_1\quad\colon\quad \frac{1}{m} \sum_{k=1}^m |(u\ast\kappa_b)(t_k)-y_k|^2 \leq
        \lc\beta+ C_3 j_0 2^{-bj_0} r\rc^2,
    \end{align}
    where the minimum is taken over $\Span(\phi_{j,n})_{(j,n)\in\Lambda_{\leq j_0}}$.
    
    Let $\gamma\in(0,1)$. If
    \begin{equation}
    \label{eq:m-deconvolution}
        m\geq C_0\tau\max\{j_0\log^3{\tau},\log(1/\gamma)\},
    \end{equation}
    then, with probability exceeding $1-\gamma$, the following recovery estimate holds:
    \begin{equation}
        \|u^{\dagger}-\widehat{u}\|_{L^2} \leq
        C_1 2^{bj_0}\frac{\sigma_s(W^{-1}P_{\leq j_0}\Phi{u^{\dagger}})_1}{\sqrt{s}} + C_2 j_0 ( 2^{bj_0}\beta + j_0 r ).
    \end{equation}
\end{theorem}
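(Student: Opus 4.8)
The plan is to recognize Theorem~\ref{thm:deconv} as a direct specialization of the abstract recovery result in Theorem~\ref{thm:abstract}, so that the entire proof amounts to verifying that the deconvolution setting satisfies all the hypotheses with the specific parameter choices $\zeta = 1$, $d = b-1$, and the uniform noise model. The bulk of this verification has already been carried out in Sections~\ref{sec:deconv_balancing} and \ref{sec:deconv-coherencebounds-probdistr}, so my first step is to collect these facts into a coherent application.

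First I would invoke Lemma~\ref{lem:deconv_balancing} with the choice $\theta = \sqrt{c/2}$ and $N = \lceil C j_0 \rceil$, guaranteeing that $P_N$ satisfies the balancing property with respect to $(U,\Phi,j_0,b,\theta)$. Since $U = (I-\Delta)^{-b/2}$ satisfies the quasi-diagonalization property with respect to $(\Phi,b)$ by \eqref{eq:deconv-quasi-diag} and the Littlewood--Paley characterization, Proposition~\ref{prop:balancing} then yields that the truncated forward map $F = P_N U$ satisfies the weak quasi-diagonalization property with respect to $(\Phi,j_0,b)$. By the remark following Assumption~\ref{ass:quasi-diag}, this weak property suffices to run Theorem~\ref{thm:abstract}. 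Next, I would record that \eqref{eq:deconv_coherence} furnishes Assumption~\ref{ass:coherence} (in its scale-uniform form) with $B = B_0 |\cB_N|^{1/2}$, $f_\nu = 1/|\cB_N|$, and $d = b-1$, and that Assumption~\ref{ass:prob_lower_bound} holds with $c_\nu = 1/|\cB_N|$. The truncation hypothesis \eqref{eq:ass-truncation-error} reduces, via the uniform bound $\sup_t \|F_t\| \le C_F = \|\kappa_b\|_{L^1}$, to the single assumption $\|P_{\leq j_0}^\perp \Phi u^\dagger\|_2 \le r$ stated in the theorem (with $r$ replaced by a constant multiple, absorbed into $C_3$).

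Having verified all hypotheses, I would then substitute the parameter values into the conclusions of the uniform-bound case of Theorem~\ref{thm:abstract}. With $\zeta = 1$, the quantity $\tau = B^2\, 2^{2(b-d)j_0}\, 2^{2(1-\zeta)bj_0}\, s$ collapses: the factor $2^{2(1-\zeta)bj_0}$ becomes $1$, while $2^{2(b-d)j_0} = 2^{2j_0}$ since $b - d = 1$, and $B^2 \asymp j_0^2$ by \eqref{eq:deconv-bcnu-est}, yielding $\tau \asymp j_0^2 2^{2j_0} s$ as in \eqref{eq:tau-deconvolution}. The sample complexity bound \eqref{eq:sample-complexity} and recovery estimate \eqref{eq:rec-est-main-thm-2} translate directly, using $c_\nu^{-1/2} \asymp j_0$, $2^{\zeta b j_0} = 2^{b j_0}$, and $c_\nu^{-1/2} r \asymp j_0 r$, into the claimed bounds \eqref{eq:m-deconvolution} and the final recovery estimate; the identification $F_{t_k}\Phi^* x^\dagger = (u^\dagger \ast \kappa_b)(t_k)$ with $x^\dagger = \Phi u^\dagger$ matches the measurement model and the minimization problem.

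The routine part is the bookkeeping of constants and the reconciliation of the $\ell^2(\Gamma)$ formulation of Theorem~\ref{thm:abstract} with the signal-space formulation here, using that $\Phi$ is an isometry so that $\|u^\dagger - \widehat u\|_{L^2} = \|x^\dagger - \widehat x\|_2$. The one point requiring genuine care — the main obstacle, such as it is — is confirming that the scale-\emph{uniform} coherence bound \eqref{eq:deconv_coherence} legitimately plays the role demanded by Assumption~\ref{ass:coherence} with the optimized value $d = b-1$ rather than a smaller exponent, and that the resulting $B \asymp j_0$ (which grows with $j_0$) is compatible with the theorem's requirement $B \ge 1$ and feeds correctly into $\tau$; here I would appeal to Remark~\ref{rem:ass_coherence_mod}, which explicitly sanctions a scale-dependent constant $B$ tied to the maximum resolution $j_0$.
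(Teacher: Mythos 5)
Your proposal is correct and follows essentially the same route as the paper's own proof: verify the weak quasi-diagonalization of $F=P_N U$ via Lemma~\ref{lem:deconv_balancing} and Proposition~\ref{prop:balancing}, recast the coherence bound \eqref{eq:deconv_coherence} in the form of Assumption~\ref{ass:coherence} with $B$ and $c_\nu$ as in \eqref{eq:deconv-bcnu-est}, and then apply Theorem~\ref{thm:abstract} with $\zeta=1$ in the uniform-noise case. Your version simply spells out the parameter bookkeeping ($d=b-1$, $\tau\asymp j_0^2 2^{2j_0}s$, $c_\nu^{-1/2}\asymp j_0$) that the paper leaves implicit, and your handling of the truncation condition and of the $j_0$-dependence of $B$ matches the paper's remarks.
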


\begin{remark}
We emphasize that the sample complexity given by \eqref{eq:tau-deconvolution} and \eqref{eq:m-deconvolution} does not lead to a classical subsampling result, since (up to logarithmic factors) it is proportional to $2^{2j_0}$ --- which is roughly the size of $|\Lambda_{\leq j_0}|$. This is mainly due to the high coherence condition encoded by \eqref{eq:deconv_coherence}. In particular, Theorem~\ref{thm:deconv} does not provide a subsampling strategy for the deconvolution problem, rather it allows one to achieve stable recovery with a number of measurements roughly proportional to $M_{\leq j_0} s$.

We also note that the ill-posedness of the convolution operator $F$ might imply that a number of samples larger than $M_{\leq j_0}$ is needed in order to obtain a method that is robust to noise and truncation error. This is the case, for instance, of function approximation problems, where it is known that $m\approx M^2$ pointwise equidistributed (with respect to the uniform measure) samples are required to stably approximate a linear combination of the first $M$ Legendre polynomials --- see \cite[Remark 2.2]{adcockinfdim} and the references \cite{RW,adcock_sparsebook,adcock2022efficient} on sparse polynomial approximation. Our result shows that a random sampling strategy is actually enough to obtain stable recovery with $m\approx M_{\leq j_0}s$, up to logarithmic terms. Moreover, the dependence of sample complexity on the degree of ill-posedness of the problem is confined to the constant $C_0$ in \eqref{eq:m-deconvolution}, while the dependence on $j_0$ is always proportional to $M_{\leq j_0}$ (up to logarithmic terms).

Similar remarks apply  to Theorems~\ref{thm:deconv2} and \ref{thm:inverse-source} as well.
\end{remark}

\subsection{An alternative probability distribution}
\label{sec:deconv-alternative-prob}
Sampling with respect to the uniform probability is not the only possible choice. Indeed, our general framework makes it possible to exploit the fast decay at infinity of the kernel to formulate a recovery procedure which allows us to sample on $\bR^2$ with respect to an exponentially decreasing probability density by slightly increasing the number of samples required, provided that a stronger assumption on the noise level is satisfied.

\begin{lemma}
\label{lem:deconv_alternative_coherence}
There exist constants $B\geq 1$ and $C>0$, depending only on $b$ and on the wavelet dictionary, such that, for every $\alpha\in(0,1]$, the following bound holds:
\begin{align*}
    |F_t \phi_{j,n}| \leq B\alpha^{-1} \frac{\sqrt{f_{\nu}(t)}}{2^{dj}},\qquad t\in\bR^2,\, (j,n)\in\Gamma
\end{align*}
with $d=(1-\alpha)(b-1)$, where $f_{\nu}$ is the probability density function proportional to $e^{-C\alpha|\cdot|}$ with respect to the Lebesgue measure on $\bR^2$.
\end{lemma}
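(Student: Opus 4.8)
The idea is to interpolate between two elementary bounds on $|F_t\phi_{j,n}| = |(\kappa_b\ast\phi_{j,n})(t)|$. On the one hand, there is the scale-decaying estimate \eqref{eq:deconv_coherence}, $|F_t\phi_{j,n}| \leq B_0\,2^{-(b-1)j}$, which is uniform in $t\in\bR^2$ (here $B_0$ depends only on $b$ and the dictionary). On the other hand I will establish a complementary \emph{spatially}-decaying estimate, uniform in the scale $j$. Taking a geometric mean with weights $1-\alpha$ and $\alpha$ will simultaneously produce the exponent $d=(1-\alpha)(b-1)$ and the exponential decay rate of the density $f_\nu$.

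\textbf{Spatial decay.} The second estimate rests on two facts. First, the Bessel kernel satisfies $|\kappa_b(x)| \leq C e^{-c|x|}$ for all $x \in \bR^2$, for suitable $c,C>0$ depending only on $b$; this is exactly where $b>2$ is used, since in dimension two it guarantees that $\kappa_b$ is bounded (so the inequality also holds near the origin), while the exponential tail is standard. Second, every $\phi_{j,n}$ is supported in the fixed bounded set $K$ of \eqref{eq-K}, whence $|t-y| \geq |t| - R_K$ for $y\in\supp\phi_{j,n}$, where $R_K := \sup_{y\in K}|y|$. Combining these with $\|\phi_{j,n}\|_1 \lesssim 2^{-j} \leq C'$ (uniformly in $j\geq 0$) gives
\[
    |F_t\phi_{j,n}| \leq \sup_{y\in\supp\phi_{j,n}}|\kappa_b(t-y)|\cdot\|\phi_{j,n}\|_1 \leq C_0\,e^{-c|t|}, \qquad t\in\bR^2,\ (j,n)\in\Gamma,
\]
uniformly in the scale, with $C_0$ depending only on $b$ and the dictionary.

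\textbf{Interpolation and normalization.} The core step is the geometric mean of the two bounds,
\[
    |F_t\phi_{j,n}| = |F_t\phi_{j,n}|^{1-\alpha}\,|F_t\phi_{j,n}|^{\alpha} \leq B_0^{1-\alpha}C_0^{\alpha}\,2^{-(1-\alpha)(b-1)j}\,e^{-c\alpha|t|},
\]
which already displays $d=(1-\alpha)(b-1)$, together with $B_0^{1-\alpha}C_0^{\alpha}\leq\max(B_0,C_0)$ for all $\alpha\in(0,1]$, so the prefactor is controlled uniformly in $\alpha$. It remains to recast the spatial factor in terms of $f_\nu$. Choosing $f_\nu = C_\nu^{-1}e^{-C\alpha|\cdot|}$ with $C = 2c$, one computes $C_\nu = \int_{\bR^2}e^{-C\alpha|x|}\,\diff x = 2\pi/(C\alpha)^2$, so that $\sqrt{f_\nu(t)} = \frac{C\alpha}{\sqrt{2\pi}}\,e^{-c\alpha|t|}$, i.e.\ $e^{-c\alpha|t|} = \frac{\sqrt{2\pi}}{C}\,\alpha^{-1}\sqrt{f_\nu(t)}$. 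Substituting yields the claimed inequality with $d=(1-\alpha)(b-1)$ and $B = \max\{1,\frac{\sqrt{2\pi}}{C}\max(B_0,C_0)\}\geq 1$, depending only on $b$ and the dictionary; the entire $\alpha$-dependence is captured by the explicit factor $\alpha^{-1}$.

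\textbf{Main obstacle.} The step needing most care is the pointwise exponential decay $|\kappa_b(x)|\leq Ce^{-c|x|}$: one must verify a clean bound valid on all of $\bR^2$ — the boundedness near the origin (which $b>2$ supplies in dimension two) together with the standard exponential tail of the Bessel potential — and check that $c$ depends only on $b$. The remaining points are routine, the only other thing to confirm being the $\alpha$-uniformity of the constants, which the computation above settles.
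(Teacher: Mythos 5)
Your proposal is correct and follows essentially the same route as the paper: combine the scale-decay bound \eqref{eq:deconv_coherence} with a spatial exponential-decay bound coming from the Bessel kernel estimate \eqref{eq:kernel_decay} and the compact support of the wavelets, take a geometric mean with weights $1-\alpha$ and $\alpha$, and normalize the exponential weight to a probability density, which produces exactly the $\alpha^{-1}$ factor. The only cosmetic difference is that you derive the spatial bound via an $L^\infty$--$L^1$ H\"older estimate with $\|\phi_{j,n}\|_1\lesssim 2^{-j}$, whereas the paper uses Cauchy--Schwarz with $\|\phi_{j,n}\|_{L^2}=1$ (its estimate \eqref{eq:decay_conv}); both are valid and yield the same uniform-in-$j$ decay.
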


We use this estimate for $\alpha=\eta/(b-1)$ for some $\eta\in(0,b-1]$ to apply Theorem \ref{thm:abstract} with $\zeta=1$ and with the nonuniform bound assumption on the noise level.
\begin{theorem}
\label{thm:deconv2}
    Consider the wavelet dictionary $(\phi_{j,n})_{j,n\in\Gamma}$ defined in Sec.~\ref{sec:deconv-setting} and the corresponding analysis operator $\Phi\colon \cH_1\rightarrow\ell^2(\Gamma)$. Fix $j_0\in\bN$, $b>2$ and $\eta\in(0,b-1]$.
    
    Consider a signal $u^{\dagger}\in L^2(\cB_1)$ satisfying $\|P_{\leq j_0}^{\perp}u^{\dagger}\|_{\cH_1}\leq \eta r$ for some $r\geq 0$ and let $t_1,\dots,t_m\in\bR^2$ be i.i.d.\ samples drawn from the distribution with density $f_{\nu}$ proportional to $e^{-C\eta|\cdot|/(b-1)}$, where the constant $C$ comes from Lemma~\ref{lem:deconv_alternative_coherence}. Let $y_k \coloneqq (u^{\dagger}\ast\kappa_b)(t_k)+\varepsilon_k$ for $k=1,\dots,m$, with $|\varepsilon_k|\leq \lc f_{\nu}(t_k)\rc^{1/2}\beta$. Let $s\in\bN$ be such that $2\leq s\leq M_{\leq j_0}$, and set
      \begin{align*}
        \tau \coloneqq \eta^{-2}2^{2 (1+\eta) j_0}s.
    \end{align*}
    
    There exist constants $C_0,C_1,C_2,C_3>0$, which depend only on $b$ and on the wavelet dictionary, for which the following result holds.
    
    Let $W=\diag(2^{bj})_{(j,n)\in\Lambda_{\leq j_0}}$ and let $\widehat{u}$ be a solution of the minimization problem
    \begin{align}
        \min_{u} \|W^{-1}\Phi{u}\|_1\quad\colon\quad \frac{1}{m} \sum_{k=1}^m f_{\nu}(t_k)^{-1}|(u\ast\kappa_b)(t_k)-y_k|^2 \leq
        \lc \beta+C_3 2^{-bj_0}r \rc^2,
    \end{align}
    where the minimum is taken over $\Span(\phi_{j,n})_{(j,n)\in\Lambda_{\leq j_0}}$.

    Let $\gamma\in(0,1)$. If
    \begin{align*}
        m\geq C_0\tau\max\{j_0\log^3{\tau},\log(1/\gamma)\},
    \end{align*}
    then, with probability exceeding $1-\gamma$, the following recovery estimate holds:
    \begin{equation}
        \|u^{\dagger}-\widehat{u}\|_{L^2} \leq
        C_1 2^{bj_0}\frac{\sigma_s(W^{-1}P_{\leq j_0}\Phi{u^{\dagger}})_1}{\sqrt{s}} + C_2 ( 2^{bj_0}\beta + r ).
    \end{equation}
\end{theorem}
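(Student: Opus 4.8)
The plan is to derive Theorem~\ref{thm:deconv2} as a direct application of the \emph{nonuniform bound case} of Theorem~\ref{thm:abstract}, with the choices $\zeta=1$, $\cH_2=\bC$, $(\cD,\mu)=(\bR^2,\diff x)$, forward map $F=U$ (no truncation is performed in this alternative scheme, so $F$ is the full Bessel potential $\kappa_b\ast\,\cdot\,$ sampled pointwise and no balancing property is needed), and $\alpha\coloneqq\eta/(b-1)\in(0,1]$. I first record the structural ingredients. Since no projection is applied, $\|F\Phi^*x\|_{L^2_\mu(\cD)}^2=\|U\Phi^*x\|_{L^2(\bR^2)}^2$, so Assumption~\ref{ass:quasi-diag} holds for $F$ with parameter $b$ precisely because $U=(I-\Delta)^{-b/2}$ does (Section~\ref{sec:deconv-setting}). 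The coherence Assumption~\ref{ass:coherence} is exactly Lemma~\ref{lem:deconv_alternative_coherence}, which provides the exponent $d=(1-\alpha)(b-1)=b-1-\eta\in[0,b)$ and the effective constant $\widetilde B\coloneqq B\alpha^{-1}=B(b-1)/\eta\ge 1$. The noise hypothesis $|\varepsilon_k|\le f_\nu(t_k)^{1/2}\beta$ is precisely \eqref{eq:noise_nonunif_bound}.

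The technical heart, which I expect to be the main obstacle, is verifying the truncation hypothesis \eqref{eq:ass-truncation-error2} starting only from the $\ell^2$ tail bound $\|P_{\leq j_0}^{\perp}u^{\dagger}\|_{\cH_1}\le\eta r$. The second inequality there is immediate: since $\Phi$ is isometric, $\|P_{\leq j_0}^{\perp}x^{\dagger}\|_2=\|P_{\leq j_0}^{\perp}u^{\dagger}\|_{\cH_1}\le\eta r$. For the first (weighted) inequality I would \emph{not} sum the per-wavelet bound of Lemma~\ref{lem:deconv_alternative_coherence} over the tail: estimating $\sum_{j>j_0,n}|c_{j,n}|2^{-dj}$ against $\|c\|_2$ by Cauchy--Schwarz costs a factor $\big(\sum_{j>j_0}M_j 2^{-2dj}\big)^{1/2}\asymp\big(\sum_{j>j_0}2^{(2-2d)j}\big)^{1/2}$, which diverges once $d\le 1$, i.e.\ $\eta\ge b-2$. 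Instead I exploit that $g\coloneqq P_{\leq j_0}^{\perp}u^{\dagger}$ is supported in the fixed bounded set $K$ of \eqref{eq-K}, independently of which scales appear. Writing $Ug(t)=\int_K\kappa_b(t-y)g(y)\diff y$ and applying Cauchy--Schwarz in $y$ gives $|Ug(t)|\le\|g\|_{L^2}\big(\int_K|\kappa_b(t-y)|^2\diff y\big)^{1/2}$, and the exponential decay of the Bessel kernel (the hypothesis $b>2$ conveniently provides an explicit such estimate) yields $\big(\int_K|\kappa_b(t-y)|^2\diff y\big)^{1/2}\le C e^{-\lambda|t|}$ for some $\lambda>0$ controlled by the kernel. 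Choosing the density constant $C$ in $f_\nu\propto e^{-C\eta|\cdot|/(b-1)}$ small enough that $C\alpha/2\le\lambda$, the factor $f_\nu(t)^{-1/2}\asymp C_\nu^{1/2}e^{C\alpha|t|/2}$ is absorbed; since $C_\nu=\int_{\bR^2}e^{-C\alpha|x|}\diff x\asymp\alpha^{-2}$, this gives
\[
\sup_{t}f_\nu(t)^{-1/2}|Ug(t)|\le C'\alpha^{-1}\|g\|_{L^2}\le C'(b-1)\,r.
\]
Hence both bounds in \eqref{eq:ass-truncation-error2} hold with truncation parameter $\rho=\widetilde C(b)\,r$, where $\widetilde C(b)$ depends only on $b$ and the dictionary.

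It then remains to match the conclusion of Theorem~\ref{thm:abstract} with the statement. With $\zeta=1$ one has $b-d=1+\eta$ and $2^{2(1-\zeta)bj_0}=1$, so $\tau=\widetilde B^2\,2^{2(1+\eta)j_0}s=B^2(b-1)^2\eta^{-2}\,2^{2(1+\eta)j_0}s$; the factor $B^2(b-1)^2$ depends only on $b$ and the dictionary and is absorbed into $C_0$, leaving exactly $\tau=\eta^{-2}2^{2(1+\eta)j_0}s$. Since $M_{\le j_0}\asymp 2^{2j_0}$ gives $\log M_{\le j_0}\asymp j_0$, the sample complexity \eqref{eq:sample-complexity} reads $m\ge C_0\tau\max\{j_0\log^3\tau,\log(1/\gamma)\}$. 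Finally, the absorbed truncation parameter $\rho=\widetilde C(b)r$ turns the abstract constraint $(\beta+C_3 2^{-bj_0}\rho)^2$ and the abstract error term $C_2(2^{bj_0}\beta+\rho)$ into the stated ones after renaming $C_3,C_2$; and because $\widehat u=\Phi^*\iota_{\le j_0}\widehat x$ with $\Phi$ isometric, one has $\|x^{\dagger}-\widehat x\|_2=\|u^{\dagger}-\widehat u\|_{L^2}$ while the objective $\|W^{-1}x\|_1$ coincides with $\|W^{-1}\Phi u\|_1$ and $F_{t_k}\Phi^*\iota_{\le j_0}x=(u\ast\kappa_b)(t_k)$, so the nonuniform recovery estimate of Theorem~\ref{thm:abstract} reproduces the claimed bound.
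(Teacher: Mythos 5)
Your proposal is correct and takes essentially the same route as the paper's proof: both apply the nonuniform-bound case of Theorem~\ref{thm:abstract} with $\zeta=1$ and $F=U$ (no truncation or balancing), use the quasi-diagonalization \eqref{eq:deconv-quasi-diag} together with Lemma~\ref{lem:deconv_alternative_coherence} for $\alpha=\eta/(b-1)$, and verify \eqref{eq:ass-truncation-error2} via the exponential decay of $Ug$ for $g$ supported in $K$ --- your Cauchy--Schwarz directly on the kernel is just a rephrasing of \eqref{eq:decay_conv} --- obtaining the truncation parameter $C(b-1)r$ that is absorbed into the constants. The only cosmetic difference is that you present the absorption of $f_{\nu}(t)^{-1/2}$ as a smallness choice of the density constant, whereas in the paper it holds automatically because $C=2C_2$ and $\alpha\le 1$.
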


\subsection{Deconvolution of cartoon-like images}
\label{sec:deconv_cartoon_like}
We now specialize our previous result to the deconvolution problem for cartoon-like images, which are, in short, $C^2$ signals apart from $C^2$ edges --- see \cite[Section 9.2.4]{Ma} for a precise definition.

Let $u^{\dagger}$ be a cartoon-like image supported in $\cB_1$. There exist some constants $C_0,C_1>0$ depending only on $b$, on the wavelet basis and on the parameters defining the cartoon-like images class such that, if $j_0\coloneqq\lfloor 2/(1+2b)\log(1/\beta) \rfloor$ and $m$ is sufficiently large\footnote{See the proof below for a quantitative version of this statement.}, the bound of Theorem \ref{thm:deconv} has the following explicit form: 
\begin{align}
\label{eq:ex-cartoon-like-est}
    \|u^{\dagger}-\widehat{u}\|_{L^2} \leq C_1 \lc \frac{\beta^{-2\frac{b+1}{2b+1}}\log^{3}(1/\beta)}{m^{1/2}}+\log^2(1/\beta) \beta^{\frac{1}{2b+1}} \rc.
\end{align}
Choosing $m=\lceil C_0 \beta^{ -2\frac{2b+3}{2b+1}}\log^2(1/\beta) \rceil$, up to logarithmic terms we have
\begin{align*}
    \|u^{\dagger}-\widehat{u}\|_{L^2} \leq C_1\beta^{\frac{1}{2b+1}},\qquad
    \|u^{\dagger}-\widehat{u}\|_{L^2} \leq C_1\lc \frac{1}{m} \rc^{\frac{1}{4b+6}}.
\end{align*}

\section{ Sparse inverse source problem for an elliptic PDE}
\label{sec:elliptic}
We consider an inverse source problem for an elliptic PDE \cite{biros-dogan-2008}, namely inverting the solution operator of a boundary value problem associated to a second order elliptic PDE. For simplicity, in this work we consider internal data of the solution; in the literature, the case with boundary data has been considered as well \cite{elbadia-2000}. This is a simple example of an inverse problem that is not modelled via a translation-equivariant forward map.

\subsection{Setting}
\label{sec:elliptic-setting}
We consider a $r$-regular ($r>2$) wavelet dictionary $(\phi_{j,n})_{(j,n)}$ adapted to the unit ball $\cB_1\subset\bR^2$ and the corresponding space $\cH_1$, as described in Sec.~\ref{sec:wavelet-uniform}. Let $\Omega\subseteq\bR^2$ be a $C^2$ bounded domain with $\Omega\Supset K$ --- see \eqref{eq-K} --- and $\sigma\in C^1(\overline{\Omega})$ be a coefficient with $\|\sigma\|_{C^1(\overline{\Omega})}\leq C_{\sigma}$ and $\sigma\geq\lambda>0$ in $\overline{\Omega}$.

We consider the following boundary value problem: 
\begin{equation}
\label{eq:ell_probl}
    \begin{cases}
        -\Div(\sigma\nabla{w}) = u & \text{in $\Omega$},\\
        w|_{\partial\Omega} = 0 & \text{on $\partial\Omega$},
    \end{cases}
\end{equation}
where $u\in L^2(\Omega)$ is a source and $w\in H^1_0(\Omega)$ is the unique solution \cite[Section 6.2]{Ev}.
The forward map $F\colon\cH_1\rightarrow L^2(\Omega)$ of the problem of interest is given by the solution operator $Fu=w$, where $w$ is the solution of \eqref{eq:ell_probl} with source $u$. We are interested in the reconstruction of the source $u$ from $w(t_k)$, $k=1,\dots,m$.

The measurement operators $F_t\colon \cH_1\rightarrow\bC$, $t \in \Omega$, are defined by $F_t{u} = F{u}(t)$ (here $\cH_2=\bC$), and the sampling space associated to the problem is given by $(\cD,\mu) = (\Omega,\diff{x})$, where $\diff{x}$ is the Lebesgue measure restricted to $\Omega$. Standard elliptic regularity theory guarantees that $F{u}\in H^2(\Omega)$ and that the following estimate holds:
\begin{equation}
\label{eq:h2reg}
    \|F{u}\|_{H^2(\Omega)} \leq C\|u\|_{L^2},
\end{equation}
where $C=C(\Omega,C_{\sigma},\lambda)$ --- see, for instance, \cite[Theorem 8.12]{gilbargtrudinger}. By Sobolev embedding \cite[Theorem 7.17]{gilbargtrudinger}, we have that, in dimension $2$, $H^2(\Omega)\subset C(\overline{\Omega})$ is a continuous inclusion. The two estimates together imply that $(F_t)_{t\in\Omega}$ is a well-defined family of uniformly bounded operators.

\subsection{Quasi-diagonalization and coherence bounds}
We first state some Sobolev stability estimates for the inverse problem.
\begin{lemma}
\label{lem:elliptic-sobolev}
Consider the setting introduced in Sec.~\ref{sec:elliptic-setting}. There exist constants $C_1=C_1(C_{\sigma})>0$ and $C_2=C_2(\Omega,K,C_{\sigma},\lambda)>0$ such that
\begin{align*}
    C_1\|u\|_{H^{-2}(\Omega)} \leq \|F{u}\|_{L^2(\Omega)} \leq C_2 \|u\|_{H^{-2}(\Omega)},\qquad u\in\cH_1.
\end{align*}
\end{lemma}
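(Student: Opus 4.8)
The plan is to establish the two-sided estimate by identifying $F$ with the solution operator $L^{-1}$, where $L = -\Div(\sigma\nabla\,\cdot\,)$ is the elliptic operator with Dirichlet boundary conditions, and then tracking how $L^{-1}$ maps between the relevant Sobolev scales. The key observation is that $L^{-1}\colon L^2(\Omega)\to H^2(\Omega)\cap H^1_0(\Omega)$ by \eqref{eq:h2reg}, and by duality one should be able to extend this to $L^{-1}\colon H^{-2}(\Omega)\to L^2(\Omega)$. The two inequalities then correspond to the boundedness of $L^{-1}$ in opposite directions along the scale, which is why the $H^{-2}$ norm on $u$ is the natural quantity to compare against the $L^2$ norm of $Fu=w$.

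First I would establish the \emph{upper} bound $\|Fu\|_{L^2}\le C_2\|u\|_{H^{-2}}$. The idea is to realize $L^{-1}$ as a bounded map from $H^{-2}(\Omega)$ to $L^2(\Omega)$ by a duality argument: since $L$ is (essentially) self-adjoint and maps $H^2\cap H^1_0$ boundedly onto $L^2$ with a bounded inverse, the adjoint statement gives boundedness of $L^{-1}$ from the dual space $H^{-2}$ into $L^2$. Concretely, for $v=Fu=L^{-1}u$ one writes $\|v\|_{L^2} = \sup_{\|g\|_{L^2}\le 1}\langle v,g\rangle = \sup_g \langle u, L^{-1}g\rangle$, and since $L^{-1}g\in H^2\cap H^1_0$ with $\|L^{-1}g\|_{H^2}\le C\|g\|_{L^2}$ by \eqref{eq:h2reg}, the pairing is bounded by $\|u\|_{H^{-2}}\cdot C\|g\|_{L^2}$. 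Here the compact support of the wavelets inside $\Omega$ (via $\Omega\Supset K$) is what lets me pair $u$ against test functions without boundary complications, and it explains the dependence of $C_2$ on $\Omega$, $K$, $C_\sigma$ and $\lambda$.

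For the \emph{lower} bound $C_1\|u\|_{H^{-2}}\le\|Fu\|_{L^2}$, I would argue in the reverse direction: since $u = Lw$ with $w=Fu\in H^2\cap H^1_0$, the operator $L\colon H^2\cap H^1_0\to L^2$ extends to a bounded map $L^2\to H^{-2}$, so that $\|u\|_{H^{-2}}=\|Lw\|_{H^{-2}}\le C\|w\|_{L^2}$. The boundedness $\|L\varphi\|_{H^{-2}}\le C\|\varphi\|_{L^2}$ follows by pairing against $H^2_0$ test functions and moving the two derivatives in $L$ onto the test function, using $\sigma\in C^1$ with $\|\sigma\|_{C^1}\le C_\sigma$; this is where the constant $C_1$ picks up its dependence on $C_\sigma$ only.

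The main obstacle I anticipate is the careful handling of the dual Sobolev space $H^{-2}(\Omega)$ on a domain with boundary, where one must be precise about whether the relevant dual pairing is with $H^2$, $H^2_0$, or $H^2\cap H^1_0$, and about the associated trace conditions. The duality argument for the upper bound implicitly requires that the test functions $g$ produce potentials $L^{-1}g$ lying in the correct space to be paired with $u$ in the $H^{-2}$/$H^2$ duality, and similarly the lower bound needs $L$ to map $L^2$ into the correct realization of $H^{-2}$. Because $u^\dagger$ is supported well inside $\Omega$ (as $\Omega\Supset K$), these boundary subtleties should ultimately be harmless, but making the function-space bookkeeping rigorous — rather than merely formal — is the delicate point. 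I expect the regularity estimate \eqref{eq:h2reg} together with its dual to carry essentially all the analytic weight, with the rest being bounded-inverse bookkeeping on the elliptic operator.
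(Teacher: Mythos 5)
Your plan is correct and follows essentially the same route as the paper: the lower bound is obtained by pairing $u=Lw$ against $H^2_0(\Omega)$ test functions and integrating by parts twice (which is where the constant depending only on $C_\sigma$ comes from), and the upper bound by the self-adjointness identity $\langle Fu,g\rangle=\langle u,Fg\rangle$ combined with the $H^2$ regularity estimate \eqref{eq:h2reg}. The bookkeeping issue you flag --- that $Fg\in H^2(\Omega)\cap H^1_0(\Omega)$ is not an admissible $H^2_0(\Omega)$ test function --- is resolved in the paper exactly as you anticipate, using the support condition $K\Subset\Omega$: one pairs $u$ against a bounded extension of $Fg$ to $\bR^2$, so the relevant quantity is $\|u\|_{H^{-2}(\bR^2)}$, and then a cutoff $\eta\equiv 1$ on $K$ with $\supp(\eta)\subset\Omega$ gives $\|u\|_{H^{-2}(\bR^2)}\le C\|u\|_{H^{-2}(\Omega)}$, closing the argument.
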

From Proposition~\ref{prop:littlewood-paley}, we deduce that $F$ satisfies the quasi-diagonalization property with $b=2$.

We also have the following coherence bounds.
\begin{lemma}
\label{lem:elliptic-coherence}
Consider the setting introduced in Sec.~\ref{sec:elliptic-setting}. There exists a constant $B_0>0$ (depending only on $\Omega$, $C_{\sigma}$, $\lambda$ and on the wavelet dictionary) such that, for every $\alpha\in(0,1]$,
    \begin{align*}
        |F_t{\phi_{j,n}}| \leq B_0 \alpha^{-1/2} \frac{1}{2^{(1-\alpha)j}},\quad t\in\Omega,\, (j,n)\in\Gamma.
    \end{align*}
\end{lemma}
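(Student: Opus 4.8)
The plan is to bound $|F_t\phi_{j,n}| = |w_{j,n}(t)|$ by the sup norm of $w_{j,n}\coloneqq F\phi_{j,n}$, the solution of \eqref{eq:ell_probl} with source $\phi_{j,n}$, and to extract the decay in $j$ from two a priori estimates combined by interpolation. First I would record the two endpoints. On the one hand, elliptic regularity \eqref{eq:h2reg} together with $\|\phi_{j,n}\|_{L^2}=1$ gives $\|w_{j,n}\|_{H^2(\Omega)}\le C$. On the other hand, the stability estimate of Lemma~\ref{lem:elliptic-sobolev} and the standard wavelet bound $\|\phi_{j,n}\|_{H^{-2}}\le C\,2^{-2j}$ (Appendix~\ref{appendix:wavelets}) yield $\|w_{j,n}\|_{L^2(\Omega)}\le C\,2^{-2j}$. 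The first controls regularity, the second provides the decay; interpolating between them is exactly what produces a bound that both decays in $j$ and is strong enough to be turned into a pointwise estimate in dimension two.

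Since $\Omega$ is a bounded $C^2$ domain, I would fix a Stein extension operator $E\colon H^s(\Omega)\to H^s(\bR^2)$ whose norm is bounded for $s\in\{0,2\}$, set $\widetilde w\coloneqq E w_{j,n}$, and transfer the two endpoint bounds to $\bR^2$, obtaining $\|\widetilde w\|_{L^2(\bR^2)}\lesssim 2^{-2j}$ and $\|\widetilde w\|_{H^2(\bR^2)}\lesssim 1$. The crucial point of this ordering is that the interpolation is then performed \emph{after} extension, entirely on $\bR^2$, where it becomes elementary on the Fourier side and carries constant one: H\"older's inequality applied to the splitting $(1+|\xi|^2)^{1+\alpha}|\widehat{\widetilde w}|^2=\bigl[|\widehat{\widetilde w}|^2\bigr]^{(1-\alpha)/2}\bigl[(1+|\xi|^2)^2|\widehat{\widetilde w}|^2\bigr]^{(1+\alpha)/2}$ gives
\[
\|\widetilde w\|_{H^{1+\alpha}(\bR^2)}\le \|\widetilde w\|_{L^2(\bR^2)}^{(1-\alpha)/2}\,\|\widetilde w\|_{H^2(\bR^2)}^{(1+\alpha)/2}\lesssim 2^{-(1-\alpha)j}.
\]

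To pass from $H^{1+\alpha}$ to $L^\infty$ I would use the Fourier-analytic embedding while keeping explicit track of the constant: by Cauchy--Schwarz,
\[
\|\widetilde w\|_{L^\infty(\bR^2)}\le \|\widehat{\widetilde w}\|_{L^1(\bR^2)}\le\Big(\int_{\bR^2}(1+|\xi|^2)^{-(1+\alpha)}\,\diff\xi\Big)^{1/2}\|\widetilde w\|_{H^{1+\alpha}(\bR^2)},
\]
and the radial integral evaluates to $\pi/\alpha$, so the embedding constant is $\sqrt{\pi}\,\alpha^{-1/2}$. This is precisely where the borderline nature of $H^1\hookrightarrow L^\infty$ in two dimensions manifests, and it is the origin of the factor $\alpha^{-1/2}$ in the statement. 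Combining the last two displays with $|F_t\phi_{j,n}|=|w_{j,n}(t)|\le\|w_{j,n}\|_{L^\infty(\Omega)}\le\|\widetilde w\|_{L^\infty(\bR^2)}$ yields the claim, with $B_0$ absorbing the extension constants, the elliptic constants from \eqref{eq:h2reg} and Lemma~\ref{lem:elliptic-sobolev}, and the wavelet constant.

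The hard part---really the only delicate point---is the bookkeeping of the $\alpha$-dependence: the interpolation must be carried out with a constant independent of $\alpha$ (hence extending to $\bR^2$ first, where it equals one), and the embedding constant must be quantified sharply as $\alpha\to 0^+$, which is exactly the computation $\int_{\bR^2}(1+|\xi|^2)^{-(1+\alpha)}\,\diff\xi=\pi/\alpha$. Everything else is a routine combination of estimates already established in the excerpt.
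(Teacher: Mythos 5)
Your proof is correct, but it takes a genuinely different route from the paper's. The paper works at the level of the \emph{operator}: it records the two mapping properties $\|Fu\|_{H^1(\Omega)}\lesssim\|u\|_{H^{-1}(\Omega)}$ and $\|Fu\|_{H^2(\Omega)}\lesssim\|u\|_{L^2(\Omega)}$, extends to $\bR^2$ (with a cutoff argument to replace the domain norm $\|u\|_{H^{-1}(\Omega)}$ by $\|u\|_{H^{-1}(\bR^2)}$ for sources supported in $K$), and then invokes complex interpolation for the compatible couples $\lc H^{-1}(\bR^2),L^2(\bR^2)\rc$ and $\lc H^1(\bR^2),H^2(\bR^2)\rc$ to obtain $\|EFu\|_{H^{1+\alpha}(\bR^2)}\lesssim\|u\|_{H^{-1+\alpha}(\bR^2)}$; the decay in $j$ then enters only at the very end, via the Littlewood--Paley bound $\|\phi_{j,n}\|_{H^{-1+\alpha}}\lesssim 2^{-(1-\alpha)j}$. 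You instead apply $F$ to the single wavelet first, take as endpoints $\|F\phi_{j,n}\|_{L^2}\lesssim\|\phi_{j,n}\|_{H^{-2}}\lesssim 2^{-2j}$ (the upper bound of Lemma~\ref{lem:elliptic-sobolev} plus Littlewood--Paley at $s=-2$, admissible since $r>2$) and $\|F\phi_{j,n}\|_{H^2}\lesssim 1$ (estimate \eqref{eq:h2reg}), extend, and interpolate the \emph{norms of a fixed function} by an elementary H\"older argument on the Fourier side, with constant one and with the decay entering at the start. The final step --- the quantified embedding $H^{1+\alpha}(\bR^2)\hookrightarrow L^\infty$ with constant $\sqrt{\pi}\,\alpha^{-1/2}$ from $\int_{\bR^2}(1+|\xi|^2)^{-(1+\alpha)}\,\diff\xi=\pi/\alpha$ --- is identical in both arguments, and is indeed the sole source of the $\alpha^{-1/2}$ factor. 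What your version buys is economy: no abstract operator interpolation (no citation of Bergh--L\"ofstr\"om), no need for the $H^{-1}\to H^1$ stability estimate, and no cutoff manipulation of negative-order norms, with fully transparent constant tracking in $\alpha$. What the paper's version buys is generality: the interpolated mapping property $H^{-1+\alpha}(\bR^2)\to H^{1+\alpha}(\bR^2)$, uniform in $\alpha$, holds for arbitrary sources supported in $K$ and is reusable beyond wavelets, whereas your bound is tailored to the dictionary elements --- which is all the lemma requires.
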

By the normalization procedure outlined in Sec.~\ref{sec:prob_distr}, we conclude that Assumption~\ref{ass:coherence} is satisfied with $B=B_0|\Omega|^{1/2}\alpha^{-1/2}$, $d=1-\alpha$ and $f_{\nu}= 1/|\Omega|$, namely the density of the uniform distribution on $\Omega$.

\subsection{Recovery estimates}
We now apply Theorem \ref{thm:abstract} with $\zeta=1$ and with the uniform bound assumption on the noise level.
\begin{theorem}
\label{thm:inverse-source}
    Consider the setting introduced in Sec.~\ref{sec:elliptic-setting}. Fix $j_0\in\bN$ and $\alpha\in(0,1]$.
    Consider a signal $u^{\dagger}\in L^2(\cB_1)$ satisfying $\|P_{\leq j_0}^{\perp} \Phi u^{\dagger}\|_2\leq r$ and let $t_1,\dots,t_m\in\Omega$ be i.i.d.\ samples drawn from the uniform distribution on $\Omega$. Let $y_k \coloneqq w(t_k)+\varepsilon_k$ for $k=1,\dots,m$, with $|\varepsilon_k|\leq\beta$, where $w=F{u^{\dagger}}$ is the solution of \eqref{eq:ell_probl}. Let $s\in\bN$ be such that $2\leq s\leq M_{\leq j_0}$, and set
    \begin{align*}
        \tau\coloneqq \alpha^{-1}2^{2(1+\alpha)j_0}s.
    \end{align*}
    
    There exist constants $C_0,C_1,C_2,C_3>0$, which depend only on $\Omega$, $C_{\sigma}$, $\lambda$ and on the wavelet dictionary, such that the following holds.
    
    Let $W=\diag(2^{2j})_{(j,n)\in\Lambda_{\leq j_0}}$ and let $\widehat{u}$ be a solution of the minimization problem
    \begin{align}
        \min_{u} \|W^{-1}\Phi{u}\|_1\quad\colon\quad \frac{1}{m} \sum_{k=1}^m |F{u}(t_k)-y_k|^2 \leq
        \lc \beta+C_3  2^{-2j_0}r \rc^2,
    \end{align}
    where the minimum is taken over $\Span(\phi_{j,n})_{(j,n)\in\Lambda_{\leq j_0}}$.

    Let $\gamma\in(0,1)$. If
    \begin{align*}
        m\geq C_0\tau\max\{j_0\log^3{\tau},\log(1/\gamma)\},
    \end{align*}
    then, with probability exceeding $1-\gamma$, the following recovery estimate holds:
    \begin{equation}
        \|u^{\dagger}-\widehat{u}\|_{L^2} \leq
        C_1 2^{2j_0}\frac{\sigma_s(W^{-1}P_{\leq j_0}\Phi{u^{\dagger}})_1}{\sqrt{s}} + C_2 ( 2^{2j_0}\beta + r ).
    \end{equation}
\end{theorem}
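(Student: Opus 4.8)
The plan is to treat this as a direct application of the abstract recovery result, Theorem~\ref{thm:abstract}, with $\zeta=1$ and the uniform noise model, after verifying that all of its hypotheses hold in the elliptic setting. The crucial structural observation is that, unlike the deconvolution problem of Section~\ref{sec:deconv}, here the forward map $F$ already takes values in $L^2(\Omega)$ with $\Omega$ bounded, so no truncation and hence no balancing property is required: $F$ itself satisfies the full two-sided quasi-diagonalization of Assumption~\ref{ass:quasi-diag}.

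First I would record the quasi-diagonalization. By Lemma~\ref{lem:elliptic-sobolev} we have the two-sided estimate $C_1\|u\|_{H^{-2}(\Omega)}\leq\|Fu\|_{L^2(\Omega)}\leq C_2\|u\|_{H^{-2}(\Omega)}$, and combining this with the Littlewood--Paley characterization of $H^{-2}$ in Proposition~\ref{prop:littlewood-paley} yields Assumption~\ref{ass:quasi-diag} with $b=2$; the upper bound also furnishes an admissible $C_F$. Next, fixing $\alpha\in(0,1]$, Lemma~\ref{lem:elliptic-coherence} gives $|F_t\phi_{j,n}|\leq B_0\alpha^{-1/2}2^{-(1-\alpha)j}$ uniformly in $t\in\Omega$ and $(j,n)\in\Gamma$. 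Following the normalization in Section~\ref{sec:prob_distr} with the constant density $g_\nu\equiv B_0^2\alpha^{-1}$, this becomes Assumption~\ref{ass:coherence} with $f_\nu=1/|\Omega|$ (the uniform distribution on $\Omega$), $d=1-\alpha$ and $B=B_0|\Omega|^{1/2}\alpha^{-1/2}$. Since $f_\nu$ is constant and $\Omega\Supset K\supset\cB_1$ forces $|\Omega|\geq 1$, Assumption~\ref{ass:prob_lower_bound} holds with $c_\nu=1/|\Omega|$.

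With the three assumptions in place I would invoke Theorem~\ref{thm:abstract} in the uniform-bound case with $\zeta=1$, $b=2$ and $d=1-\alpha$. The quantity $\tau=B^2\,2^{2(b-d)j_0}\,2^{2(1-\zeta)bj_0}\,s$ collapses, using $\zeta=1$ and $b-d=1+\alpha$, to $\tau=B_0^2|\Omega|\,\alpha^{-1}2^{2(1+\alpha)j_0}s$; absorbing the fixed factor $B_0^2|\Omega|$ into $C_0$ recovers the stated $\tau\asymp\alpha^{-1}2^{2(1+\alpha)j_0}s$. The sample-complexity bound \eqref{eq:sample-complexity} transfers verbatim once one notes that for the wavelet dictionary $M_{\leq j_0}\asymp 2^{2j_0}$, so that $\log M_{\leq j_0}\asymp j_0$.

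Finally I would translate the abstract estimate \eqref{eq:rec-est-main-thm-2} into the stated form. The fidelity constraint in Theorem~\ref{thm:abstract} reads $\lc\beta+C_3 c_\nu^{-1/2}2^{-bj_0}r\rc^2$, and the recovery bound carries the prefactor $c_\nu^{-1/2}$; since $c_\nu^{-1/2}=|\Omega|^{1/2}$ is a fixed constant it can be absorbed into $C_2$ and $C_3$, yielding the constraint $\lc\beta+C_3 2^{-2j_0}r\rc^2$ and the error bound $C_1 2^{2j_0}\sigma_s(W^{-1}P_{\leq j_0}\Phi u^\dagger)_1/\sqrt{s}+C_2(2^{2j_0}\beta+r)$. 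Because $\Phi$ is the analysis operator of an orthonormal basis, it is an isometry and $\|x^\dagger-\widehat{x}\|_2=\|u^\dagger-\widehat{u}\|_{L^2}$, completing the identification. There is no genuine analytic obstacle inside this argument --- all the real work is packaged into Lemmas~\ref{lem:elliptic-sobolev} and \ref{lem:elliptic-coherence} --- so the only point demanding care is the bookkeeping that keeps the $\alpha$-dependence explicit in $\tau$ (through $B^2$ and $2^{2(b-d)j_0}$) while safely absorbing the $\Omega$-dependent factors $|\Omega|$ and $c_\nu^{-1/2}$ into the universal constants.
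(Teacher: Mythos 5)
Your proposal is correct and follows essentially the same route as the paper's proof, which likewise just verifies quasi-diagonalization via Lemma~\ref{lem:elliptic-sobolev} (with Proposition~\ref{prop:littlewood-paley}, $b=2$) and the coherence bound via Lemma~\ref{lem:elliptic-coherence} with the normalization $B=B_0|\Omega|^{1/2}\alpha^{-1/2}$, $d=1-\alpha$, $f_\nu=1/|\Omega|$, and then invokes Theorem~\ref{thm:abstract} with $\zeta=1$ in the uniform-noise case. In fact your write-up is more explicit than the paper's on the bookkeeping (absorbing $B_0^2|\Omega|$ into $\tau$, the $c_\nu^{-1/2}$ factors into $C_2,C_3$, and checking Assumption~\ref{ass:prob_lower_bound}); the only hypothesis you leave implicit --- the first truncation-error condition in \eqref{eq:ass-truncation-error} --- is covered, exactly as in the paper, by the uniform boundedness of $(F_t)_{t\in\Omega}$ established in Section~\ref{sec:elliptic-setting}.
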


\section{Fourier subsampling with ill-posedness}\label{sec-fou-ill}

In Example \ref{ex:fou-wav} we introduced the problem of reconstructing a signal $u^{\dagger}\in L^2(0,1)$ from some samples of its Fourier coefficients $\Big(\widehat{u^{\dagger}}(t)\Big)_{t\in \bZ}$. In this section we introduce a two-dimensional ill-posed version of this problem that is motivated by Magnetic Resonance Imaging (MRI). Indeed, in MRI one aims at reconstructing a signal $u^{\dagger}\in L^2(\cB_1)$ (recall that $\cB_1=\{x \in \bR^2 : |x| <1\}$) via the following samples along curves $k\colon[0,T]\rightarrow \bR^2$ for a discrete set of times $t_1,\dots,t_m$ --- see, for instance, \cite{nishimura}:
\begin{align*}
    s(t) = \int_{\bR^2} e^{-t/T_2(x)} u^{\dagger}(x) e^{-2\pi i k(t)\cdot x} \diff{x},
\end{align*}
where $T_2\colon\cB_1\rightarrow(0,+\infty)$ is the so-called \textit{spin-spin relaxation time}.

Assuming $T_2$ to be constant, we get that
\begin{align*}
    s(t) = e^{-t/T_2} \int_{\bR^2} u^{\dagger}(x) e^{-2\pi i k(t)\cdot x} \diff{x} = e^{-t/T_2} \cF u^{\dagger}\big(k(t)\big).
\end{align*}
In many cases, $k$ is chosen as a curve with $k(0)=0$ such that $|k|$ is increasing with time --- common choices are radial lines or outward spirals. If this is the case, we can see from the formula above that the actual samples are given by the Fourier transform of a signal multiplied by a function that decays to $0$ when $|k|\rightarrow+\infty$. This motivates the following simplified model.

\subsection{Setting}
\label{sec:illfour-setting}
We consider a $r$-regular ($r>1$) wavelet dictionary $(\phi_{j,n})_{(j,n)}$ adapted to the unit ball $\cB_1\subset\bR^2$ and the corresponding space $\cH_1$, as described in Sec.~\ref{sec:wavelet-uniform}. 
We also consider an open ball centered in the origin $\cB_R$ such that $\cB_R\Supset K$ --- see \eqref{eq-K}. We also set $d= d(\partial \cB_R,K)$ and introduce some related notions.
\begin{definition}[\cite{beurling}]
Let $Z\subset\bR^2$. We say that $Z$ is \textit{$\delta$-dense} if
\begin{align*}
    \delta = \sup_{y\in\bR^2} d(y,Z).
\end{align*}
We say that $Z$ is \textit{$\eta$-separated} if
\begin{align*}
    \inf_{ \substack{t_1,t_2\in Z \\ t_1\neq t_2} }
    |t_1-t_2| \geq \eta.
\end{align*}
\end{definition}
In our setting, we consider $Z\subset\bR^2$ which is $\delta$-dense for some $\delta\in(0,\frac{1}{4R})$ and $\eta$-separated for some $\eta>0$; for simplicity, we also assume that $0\in Z$. Under these assumptions, the dictionary $(\tilde{\psi}_t)_{t\in Z}$, defined by
\begin{align*}
    \tilde{\psi}_t(x) \coloneqq \exp(2\pi i t\cdot x)\mathbbm{1}_{\cB_R},
\end{align*}
is a frame of $L^2(\cB_R)$, with frame bounds depending only on $R$, $\delta$ and $\eta$ --- see \cite[Theorem 1]{beurling} for further details.

We consider a moderately ill-posed problem by modulating the signal via a Bessel operator: fix $b \in (0,r)$ and consider
\begin{align*}
    \psi_t \coloneqq (1+|t|^2)^{-b/2} \tilde{\psi}_t,\qquad t\in Z.
\end{align*}
We are interested in reconstructing a signal $u^{\dagger}\in L^2(\cB_1)$ from the knowledge of $\langle u^{\dagger},\psi_t \rangle$ for a finite number of $t\in Z$. The natural space for this problem is then $(\Sigma,\mu)=(Z,\mu)$, where $\mu$ is the counting measure on $Z$. The measurement operators $F_t\colon\cH_1\rightarrow\bC$ (here $\cH_2=\bC$) and the natural forward map $U\colon\cH_1\rightarrow\ell^2(Z)$ are given by
\begin{align*}
    F_t{u}=\langle u,\psi_t \rangle,\qquad U{u}(t)=\lc\langle u,\psi_t \rangle\rc_{t\in Z}.
\end{align*}
We explicitly note that $U$ is well defined because $(\tilde{\psi}_t)_{t\in Z}$ is a frame of $L^2(\cB_R)$:
\[
\|Uu\|_2^2 = \sum_{t\in Z} |\langle u,\psi_t \rangle|^2 = \sum_{t\in Z} (1+|t|^2)^{-b} |\langle u,\tilde \psi_t \rangle|^2 \le \sum_{t\in Z} |\langle u,\tilde \psi_t \rangle|^2 \lesssim \|u\|_{\cH_1}^2.
\]

\subsection{Balancing property and quasi-diagonalization} 
We first investigate the quasi-diagonalization property of the natural forward map $U$.
\begin{proposition}
\label{prop:illfour-quasidiag}
There exist constants $c,C>0$, depending only on $b$, on the frame bounds of $(\tilde{\psi}_t)_{t\in Z}$, on $K$ and on $R$, such that the following holds:
\begin{align*}
    c\|u\|_{H^{-b}(\bR^2)} \leq \|Uu\|_{\ell^2(Z)} \leq C\|u\|_{H^{-b}(\bR^2)},\qquad u\in L^2(K).
\end{align*}
\end{proposition}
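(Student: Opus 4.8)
The plan is to pass to the Fourier side and reduce the claim to a \emph{weighted sampling inequality} for band-limited functions. Since $u$ is supported in $K\subset\cB_R$, for every $t\in Z$ one has $\la u,\psi_t\ra=(1+|t|^2)^{-b/2}\cF u(t)$, whence
\[
\|Uu\|_{\ell^2(Z)}^2=\sum_{t\in Z}w(t)\,|\cF u(t)|^2,\qquad w(\xi)\coloneqq(1+|\xi|^2)^{-b},
\]
while $\|u\|_{H^{-b}(\bR^2)}^2=\int_{\bR^2}w(\xi)\,|\cF u(\xi)|^2\,\diff\xi$. Thus it suffices to compare the weighted sum with the weighted integral of $H\coloneqq\cF u$. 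The decisive structural fact is a reproducing formula: choosing $\varphi\in C_c^\infty(\bR^2)$ with $\varphi\equiv1$ on $K$ and $\supp\varphi\subset\cB_R$ (possible thanks to the gap $d=d(\partial\cB_R,K)>0$), the identity $u=\varphi u$ yields $H=\hat\varphi*H$ with $\hat\varphi=\cF\varphi$ a Schwartz function. I will also use throughout that $w$ is slowly varying, $w(t)\le C_b\,(1+|t-\zeta|^2)^{b}\,w(\zeta)$, and that $Z$ is $\eta$-separated.

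For the upper bound I would feed the reproducing formula into a Schur-type estimate. Cauchy--Schwarz gives $|H(t)|^2\le\|\hat\varphi\|_1\int|\hat\varphi(t-\zeta)|\,|H(\zeta)|^2\,\diff\zeta$, so that
\[
\sum_{t\in Z}w(t)|H(t)|^2\le\|\hat\varphi\|_1\int|H(\zeta)|^2\Big(\sum_{t\in Z}w(t)\,|\hat\varphi(t-\zeta)|\Big)\diff\zeta .
\]
Using the slow variation of $w$ and the rapid decay of $\hat\varphi$, the inner sum is bounded by $C\,w(\zeta)\sum_{t\in Z}(1+|t-\zeta|^2)^{-N}$, which is uniformly finite because $Z$ is separated (take $N>1$ in dimension two). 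This delivers the sharp bound $\|Uu\|_{\ell^2(Z)}^2\le C\|u\|_{H^{-b}(\bR^2)}^2$, with $C$ depending only on $b$, $R$, $K$ and $\eta$.

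The lower bound is the crux and, unlike the upper bound, must genuinely invoke the \emph{lower} frame inequality of $(\tilde\psi_t)_{t\in Z}$. Set $u'\coloneqq(I-\Delta)^{-b/2}u=\kappa_b*u$, where $\kappa_b=\cF^{-1}[(1+|\cdot|^2)^{-b/2}]$ is the Bessel kernel, so that $\la u,\psi_t\ra=\cF u'(t)$ and $\|u\|_{H^{-b}}=\|u'\|_{L^2}$; the obstruction is that $u'$ is \emph{not} supported in $\cB_R$. I would split $u'=\varphi u'+(1-\varphi)u'$ and apply the frame lower bound to $\varphi u'\in L^2(\cB_R)$, writing $\cF(\varphi u')(t)=\cF u'(t)-\cF((1-\varphi)u')(t)$. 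The samples of the remainder $g\coloneqq(1-\varphi)u'$ are controlled by a trace inequality for separated sets, $\sum_{t\in Z}|\cF g(t)|^2\le C\,\|(1+|\cdot|^2)\,g\|_{L^2}^2$, valid since point evaluation is bounded on $H^2(\bR^2)\hookrightarrow C^0$ and the balls $B(t,\eta/2)$ are disjoint. Combining these estimates gives $\|u'\|_{L^2}^2\lesssim\|Uu\|_{\ell^2(Z)}^2+T$, where $T\coloneqq\int(1+|y|^2)^2\,|(1-\varphi)u'(y)|^2\,\diff y$.

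The hard part is to show that the tail $T$ can be absorbed. Here I would exploit the exponential decay of the Bessel kernel: for $y$ away from $K$ the function $z\mapsto\kappa_b(y-z)$ is smooth on $K$, so testing the distribution $u\in H^{-b}$ against it yields the pointwise bound $|u'(y)|\le C\,\|u\|_{H^{-b}}\,e^{-c\,d(y,K)}$, with $c$ the Bessel decay rate. Consequently $T\le C_0\,\|u'\|_{L^2}^2$, where $C_0=\int_{d(y,K)\ge d_0}(1+|y|^2)^2 e^{-2c\,d(y,K)}\,\diff y$ is finite and becomes small as the gap grows. Taking $d_0$ (equivalently $R$) large enough that $C_0$ falls below the threshold fixed by the frame and trace constants, the term $T$ is absorbed into the left-hand side, giving $\|u\|_{H^{-b}}^2=\|u'\|_{L^2}^2\lesssim\|Uu\|_{\ell^2(Z)}^2$. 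I expect the management of this tail --- reconciling the non-compact support of $(I-\Delta)^{-b/2}u$ with the compactly supported frame, and quantifying the absorption through the gap $d(\partial\cB_R,K)$ and the kernel decay --- to be the main technical obstacle, and the place where the dependence of $c,C$ on $K$ and $R$ enters.
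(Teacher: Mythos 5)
Your upper bound is correct, and it is a genuinely different (and more self-contained) route than the paper's: the reproducing identity $\cF u=\widehat{\varphi}\ast\cF u$, the slow variation of the weight $(1+|\cdot|^2)^{-b}$, and a packing bound coming from the $\eta$-separation of $Z$ give a weighted Plancherel--P\'olya inequality that never invokes the frame property, whereas the paper obtains this direction from the frame inequality, duality and complex interpolation. The genuine gap is in the lower bound, precisely at the absorption step you yourself flag as the crux. Your scheme yields
\begin{equation*}
    \|u'\|_{L^2}^2 \le A\,\|Uu\|_{\ell^2(Z)}^2 + B\,T
    \qquad\text{and}\qquad
    T \le C_0\,\|u'\|_{L^2}^2,
\end{equation*}
where $A,B$ are fixed by the lower frame bound and the trace constant, and $C_0$ is proportional to $\int_{d(y,K)\ge d_0}(1+|y|^2)^2 e^{-2c\,d(y,K)}\,\diff{y}$, with $d_0<d(\partial\cB_R,K)$ the room available for $\varphi$ to pass from $1$ on $K$ to $0$ outside $\cB_R$. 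Absorption requires $B\,C_0<1$, and nothing in the setting makes $C_0$ small: $K$ and $R$, hence $d_0$, are \emph{given} data, and the proposition asserts the equivalence (with constants allowed to depend on $K$, $R$, $b$ and the frame bounds) for \emph{every} admissible configuration $\cB_R\Supset K$, not only for configurations with a large gap. The proposed remedy --- enlarging $R$ --- proves a different statement, and it is moreover circular: the threshold you must beat involves the lower frame bound, which by Beurling's theorem depends on $R$, $\delta$, $\eta$; in fact the standing assumption $\delta<\frac{1}{4R}$ caps $R$ at $1/(4\delta)$ for a fixed $Z$, so going to larger $R$ forces a denser $Z$ and changes the very constants defining your threshold.

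The idea you are missing is that the remainder $(1-\varphi)u'$ need never appear. The paper does not introduce $u'=\kappa_b\ast u$ at all; instead, for $u\in L^2(K)$ it uses the \emph{exact} reconstruction $u=\chi\,(\Psi^*\Psi)^{-1}\Psi^*(\Psi u)$, where $\Psi$ is the analysis operator of the frame $(\tilde{\psi}_t)_{t\in Z}$ on $L^2(\cB_R)$ and $\chi$ is a smooth cutoff with $\chi|_K\equiv 1$ and $\supp\chi\subset\cB_R$. Because the frame inversion $(\Psi^*\Psi)^{-1}$ is exact on $L^2(\cB_R)$, there is no tail to absorb and no smallness condition anywhere; the lower bound then follows by estimating the operator $\chi(\Psi^*\Psi)^{-1}\Psi^*$ from $\ell^2(Z)$ to $L^2(\bR^2)$ (frame bounds) and from the weighted sequence space $h^{-k}(Z)$ to $H^{-k}(\bR^2)$ (duality, for integer $k\ge b$), and by complex interpolation to reach the fractional exponent $-b$ --- where your approach handles fractional $b$ directly, at the price of the unabsorbable tail. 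If you wish to keep your Fourier-side formulation, the fix is to replace the splitting $u'=\varphi u'+(1-\varphi)u'$ by this exact frame inversion.
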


Therefore the natural forward map $U$ satisfies the quasi-diagonalization property with respect to $(\Phi,b)$ by Proposition~\ref{prop:littlewood-paley}.
We now investigate how to perform truncation in order to preserve the energy of the forward map: we consider the projections $(P_{N})_{N\in\bN}$ on the spaces $\ell^2(Z\cap\cB_{N})$, so that $\cD=Z\cap \cB_{N}$.
\begin{lemma}
\label{lem:illfour-balancing}
There exists a constant $C\geq 1$, depending only on the upper frame bound of $(\tilde{\psi}_t)_{t\in Z}$, such that, if
\begin{align*}
    N \geq \theta^{-1/b} C^{1/2b} 2^{j_0},
\end{align*}
then $P_N$ satisfies the balancing property with respect to $(U,\Phi,j_0,b,\theta)$.
\end{lemma}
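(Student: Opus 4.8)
The plan is to estimate the operator norm $\|P_N^\perp U \Phi^* \iota_{\leq j_0}\|_{\cH_1 \to \ell^2(Z)}$ directly, since the balancing property \eqref{eq:balancing} is exactly a bound on this quantity with $\cH = \ell^2(Z)$. Fix $x \in \ell^2(\Lambda_{\leq j_0})$ with $\|x\|_2 \leq 1$ and set $u = \Phi^* \iota_{\leq j_0} x = \sum_{(j,n) \in \Lambda_{\leq j_0}} x_{j,n} \phi_{j,n}$. Since each wavelet appearing in the sum has support contained in $K$ and $\cB_R \Supset K$, the function $u$ is supported in $\cB_R$; consequently $\langle u, \tilde{\psi}_t \rangle = \int_{\cB_R} u(x) e^{-2\pi i t \cdot x} \diff{x} = \cF u(t)$ for every $t \in Z$, so the frame coefficients against $\tilde{\psi}_t$ coincide with the full Fourier transform of $u$.

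Recalling that $P_N^\perp$ is the projection onto $\ell^2(Z \setminus \cB_N) = \{t \in Z : |t| \geq N\}$ and that $\psi_t = (1+|t|^2)^{-b/2} \tilde{\psi}_t$, I would write
\begin{align*}
\|P_N^\perp U u\|_{\ell^2(Z)}^2 = \sum_{\substack{t \in Z \\ |t| \geq N}} (1+|t|^2)^{-b} |\langle u, \tilde{\psi}_t \rangle|^2.
\end{align*}
The key observation is that the Bessel weight is uniformly small on the truncated tail: for $|t| \geq N$ one has $(1+|t|^2)^{-b} \leq N^{-2b}$. Factoring this bound out of the sum and then enlarging the index set to all of $Z$ gives
\begin{align*}
\|P_N^\perp U u\|_{\ell^2(Z)}^2 \leq N^{-2b} \sum_{t \in Z} |\langle u, \tilde{\psi}_t \rangle|^2.
\end{align*}

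At this point I invoke the frame upper bound: since $(\tilde{\psi}_t)_{t \in Z}$ is a frame of $L^2(\cB_R)$, there is a constant $C \geq 1$, namely the upper frame bound, with $\sum_{t \in Z} |\langle u, \tilde{\psi}_t \rangle|^2 \leq C \|u\|_{L^2}^2$. Because $(\phi_{j,n})$ is orthonormal, $\|u\|_{L^2}^2 = \|x\|_2^2 \leq 1$, and taking the supremum over the unit ball of $\ell^2(\Lambda_{\leq j_0})$ yields $\|P_N^\perp U \Phi^* \iota_{\leq j_0}\|^2 \leq C N^{-2b}$. Imposing the target bound $C N^{-2b} \leq \theta^2 2^{-2bj_0}$ and solving for $N$ produces exactly $N \geq C^{1/2b} \theta^{-1/b} 2^{j_0}$, the claimed threshold.

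There is no serious obstacle here; the argument is a short computation once the two ingredients are isolated. The only points requiring care are the support containment $K \subset \cB_R$, which legitimizes identifying $\langle u, \tilde{\psi}_t \rangle$ with $\cF u(t)$, and the clean separation between the high-frequency decay supplied by the Bessel factor (contributing the power $N^{-2b}$) and the control of the remaining coefficients supplied by the frame inequality (contributing the constant $C$). The parameter $b$ enters solely through the exponent of the Bessel weight, which is precisely why it surfaces as the power $1/2b$ in the final bound on $N$.
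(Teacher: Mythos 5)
Your proof is correct and follows essentially the same route as the paper's: both bound the tail $\sum_{|t|>N}(1+|t|^2)^{-b}|\widehat{u}(t)|^2$ by pulling out the factor $(1+N^2)^{-b}\le N^{-2b}$ and then applying the upper frame bound of $(\tilde{\psi}_t)_{t\in Z}$ together with the orthonormality of the wavelets. The only (immaterial) difference is that you specialize at once to $u=\Phi^*\iota_{\leq j_0}x$, whereas the paper proves the estimate for all $u\in L^2(K)$ and then reads off the balancing property.
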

By Proposition \ref{prop:balancing} we get that $F=P_N\circ U$ satisfies the weak quasi-diagonalization property with respect to $(\Phi,j_0,b)$. We can fix $\theta=\sqrt{c/2}$, where $c$ is the lower quasi-diagonalization constant of $U$.

We emphasize that the constant $C^{1/b}$ in the Lemma blows up as $b\rightarrow 0^+$. We do not expect these estimates to be sharp: indeed, as already remarked in Example~\ref{ex:fouwave-continues}, it is possible to prove (under suitable assumptions on the wavelet dictionary) that a balancing property holds in the case $b=0$ for $N\gtrsim 2^{j_0}$.

\subsection{Coherence bounds and choice of the probability distribution}
\label{subsec:illfour-coherence}
\begin{lemma}
\label{lem:illfour-coherence}
There exists a constant $B_0>0$, depending only on $\eta$ and on the wavelet dictionary, such that
\begin{equation}
\label{eq:fou_wav_illposed_coh}
    |F_t\phi_{j,n}| \leq B_0 \frac{1}{2^{bj}(1+|t|^2)^{1/2}},\quad t\in Z,\ (j,n)\in\Gamma.
\end{equation}
\end{lemma}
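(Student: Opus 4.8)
The plan is to prove the coherence bound in Lemma~\ref{lem:illfour-coherence} by combining two ingredients: the explicit form of the measurement functional $F_t\phi_{j,n}=\langle\phi_{j,n},\psi_t\rangle=(1+|t|^2)^{-b/2}\langle\phi_{j,n},\tilde\psi_t\rangle$, and a decay estimate for the Fourier transform of the wavelets. Observe first that, since $\phi_{j,n}$ is supported in $K\subset\cB_R$, we have
\begin{equation*}
    F_t\phi_{j,n}=(1+|t|^2)^{-b/2}\int_{\cB_R}\phi_{j,n}(x)\,\overline{e^{2\pi i t\cdot x}}\,\diff{x}=(1+|t|^2)^{-b/2}\,\overline{\cF\phi_{j,n}(t)},
\end{equation*}
so that $|F_t\phi_{j,n}|=(1+|t|^2)^{-b/2}|\cF\phi_{j,n}(t)|$. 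The task thus reduces to showing that $|\cF\phi_{j,n}(t)|\lesssim 2^{-bj}(1+|t|^2)^{(b-1)/2}$, which would combine with the prefactor to yield exactly the claimed $2^{-bj}(1+|t|^2)^{-1/2}$. Equivalently, since the standard wavelet normalization gives $\phi_{j,n}(x)=2^{dj/2}\phi(2^jx-\cdots)$ with $d=2$, a change of variables in the Fourier integral shows $\cF\phi_{j,n}(t)=2^{-dj/2}e^{-2\pi i\,\cdots}\,\cF\phi_{0,n}(2^{-j}t)$, reducing everything to decay estimates on the Fourier transform of the mother wavelet at a single scale.

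\emph{The key steps, in order.} First I would invoke the $r$-regularity of the wavelet dictionary with $r>1$ (Appendix~\ref{appendix:wavelets}) to get a Fourier decay bound of the form $|\cF\phi(\xi)|\lesssim(1+|\xi|)^{-r}$ together with the vanishing-moment property $|\cF\phi(\xi)|\lesssim|\xi|$ near the origin; combining these yields a uniform bound $|\cF\phi(\xi)|\lesssim|\xi|(1+|\xi|)^{-r}\lesssim \min(|\xi|,|\xi|^{1-r})$. Second, I would track the scale parameter: writing $\cF\phi_{j,n}(t)$ in terms of $\cF\phi$ evaluated at $2^{-j}t$ (up to a translation modulation that does not affect the modulus) and the $L^2$-normalization factor $2^{-j}$ in dimension $2$, I would assemble the bound $|\cF\phi_{j,n}(t)|\lesssim 2^{-j}\min(2^{-j}|t|,(2^{-j}|t|)^{1-r})$. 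Third, I would show that this product is controlled by $2^{-bj}(1+|t|)^{b-1}$ for all $j$ and $t$, using $0<b<r$ to interpolate between the low-frequency regime (where $|t|\lesssim 2^j$, so the small-frequency factor $|\xi|$ dominates) and the high-frequency regime (where $|t|\gtrsim 2^j$, so the decay factor $|\xi|^{1-r}$ takes over and the constraint $b<r$ guarantees summability against the required power). Finally, I would fold in the $(1+|t|^2)^{-b/2}$ prefactor from the Bessel modulation to recover the stated inequality, noting that the implicit constant $B_0$ depends only on the separation parameter $\eta$ (which enters only through the frame normalization of $(\tilde\psi_t)$) and on the fixed wavelet data.

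\emph{The main obstacle} will be getting the interpolation in the third step to land on precisely the exponent pair $(b,\,b-1)$ rather than some weaker power, i.e.\ verifying that the elementary inequality
\begin{equation*}
    2^{-j}\min\!\bigl(2^{-j}|t|,\,(2^{-j}|t|)^{1-r}\bigr)\;\lesssim\;2^{-bj}(1+|t|)^{b-1}
\end{equation*}
holds uniformly in $j\in\bN$ and $t\in\bR^2$. This hinges on choosing the right comparison in each regime: writing $\xi=2^{-j}|t|$, in the range $\xi\le 1$ one uses $2^{-j}\xi=2^{-(1+? )j}\cdots$ and must check the power of $2^j$ balances against $2^{-bj}(1+|t|)^{b-1}$, while in the range $\xi\ge 1$ the decay $\xi^{1-r}$ must beat the growth in $|t|$, which is exactly where $b<r$ is indispensable. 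The bookkeeping is elementary but requires care, since the two regimes meet at $|t|\asymp 2^j$ and one must confirm the bound is continuous (up to constants) across the crossover; I expect the rest of the argument to be routine given the wavelet estimates from the appendix.
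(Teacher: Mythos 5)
Your overall reduction --- writing $|F_t\phi_{j,n}|=(1+|t|^2)^{-b/2}\,2^{-j}|\cF\phi_{0,0}(2^{-j}t)|$ and reducing everything to a power-law bound on the Fourier transform of the mother wavelet --- is exactly the paper's strategy. The gap is in the wavelet estimate you feed into it: you use only \emph{one} vanishing moment, giving $|\cF\phi_{0,0}(\xi)|\lesssim|\xi|$ near the origin, and the resulting ``elementary inequality''
\begin{equation*}
    2^{-j}\min\bigl(2^{-j}|t|,\,(2^{-j}|t|)^{1-r}\bigr)\;\lesssim\;2^{-bj}(1+|t|)^{b-1}
\end{equation*}
is simply false in part of the admissible parameter range. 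Take $t\in Z\setminus\{0\}$ fixed with $|t|\asymp 1$ and let $j\to\infty$: the left-hand side is $2^{-2j}|t|\asymp 2^{-2j}$, while the right-hand side is $\asymp 2^{-bj}$. Since the lemma must hold for every $b\in(0,r)$ and $r$ may be larger than $2$, for any $b>2$ your bound decays too slowly, by a factor $2^{(b-2)j}$. What is needed at low frequency is vanishing of order $b-1$, namely $|\cF\phi_{0,0}(\xi)|\lesssim|\xi|^{b-1}$, and this is precisely what the paper extracts from the moment condition \eqref{eq:wav_vanishing_moments}: all moments of order $|\alpha|\le b-1$ vanish (available because $b<r$), hence $\partial^{\alpha}\cF\phi_{0,0}(0)=0$ for $|\alpha|\le b-1$ and Taylor expansion gives the required vanishing. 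There is a second, smaller slip at high frequency: the decay you state, $|\xi|^{1-r}$ (coming from the unjustified product form $|\xi|(1+|\xi|)^{-r}$), is weaker than the true decay $|\xi|^{-r}$ of a compactly supported $C^r$ wavelet, and with your version the regime $|t|\ge 2^j$ requires $\xi^{1-r}\lesssim\xi^{b-1}$, i.e.\ $r+b\ge 2$, which fails e.g.\ for $r=1.2$, $b=0.5$.

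Both issues disappear if you follow the paper's route, which avoids the two-regime interpolation altogether: one proves the \emph{single} bound $|\cF\phi_{0,0}(\xi)|\le C|\xi|^{b-1}$ for all $\xi\neq 0$, by cases in $b$. For $b\le 1$ this is a decay bound of order $1-b\le 1$, which follows from compact support and $1$-regularity (and is trivial near the origin); for $b>1$ it is trivial at high frequency, since $\cF\phi_{0,0}$ is bounded, and follows from the vanishing moments at low frequency. Plugging this single bound into the scaling identity yields $|F_t\phi_{j,n}|\le C\,2^{-bj}|t|^{b-1}(1+|t|^2)^{-b/2}$, and the separation hypothesis $|t|\ge\eta$ for $t\in Z\setminus\{0\}$ (with $F_0\phi_{j,n}=0$ handled separately) converts $|t|^{b-1}$ into $(1+|t|^2)^{(b-1)/2}$ at the cost of an $\eta$-dependent constant, giving the claim. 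Note in particular that $\eta$ enters through this separation of $Z$ from the origin, not through the frame bounds of $(\tilde{\psi}_t)_{t\in Z}$ as you suggest.
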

Following the normalization procedure outlined in Section~\ref{sec:prob_distr}, we choose a probability density $f_\nu$ on $\cD=Z\cap\cB_N$ (with respect to the counting measure) proportional to $(1+|\cdot|^2)^{-1}$.
\begin{lemma}
\label{lem:illfour-zestimate}
Given $C_0>0$, there exists a constant $C_1$, depending only on $C_0$ and on $\eta$, such that the following holds:
\begin{align*}
    \#\{ t\in Z\colon\, |t|\in[0,C_0) \} &\leq C_1,\\
    \#\{ t\in Z\colon\, |t|\in[C_0 2^j,C_0 2^{j+1}) \} &\leq C_1 2^{2j},\qquad j\in\bN.
\end{align*}
\end{lemma}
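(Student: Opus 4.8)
The plan is to prove both inequalities by a single elementary packing argument that uses only the $\eta$-separation of $Z$; the $\delta$-density hypothesis plays no role for upper bounds on the cardinality (density would be needed only for matching lower bounds). The key observation is that, since any two distinct points of $Z$ lie at distance at least $\eta$, the open balls $\cB_{\eta/2}(t) = \{x\in\bR^2 : |x - t| < \eta/2\}$, for $t \in Z$, are pairwise disjoint.

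First I would establish a general volume-comparison inequality. Fix a measurable set $A \subset \bR^2$ and let $A^{\eta/2}$ denote its $\eta/2$-neighborhood. If $t \in Z \cap A$, then $\cB_{\eta/2}(t) \subset A^{\eta/2}$; since these balls are disjoint and each has area $\pi(\eta/2)^2$, summing their areas gives
\begin{equation*}
    \#(Z \cap A)\,\pi(\eta/2)^2 = \Big| \bigcup_{t \in Z \cap A} \cB_{\eta/2}(t) \Big| \leq |A^{\eta/2}|,
\end{equation*}
so that $\#(Z \cap A) \leq 4|A^{\eta/2}|/(\pi \eta^2)$.

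It then remains to apply this inequality to the two regions of interest and to bound the areas of their neighborhoods. For the first bound, with $A = \{|t| < C_0\}$, the neighborhood $A^{\eta/2}$ is the disk of radius $C_0 + \eta/2$, of area $\pi(C_0 + \eta/2)^2$; this produces a constant depending only on $C_0$ and $\eta$. For the second bound, with $A = \{C_0 2^j \leq |t| < C_0 2^{j+1}\}$, the neighborhood is contained in the annulus $\{C_0 2^j - \eta/2 \leq |t| < C_0 2^{j+1} + \eta/2\}$; expanding the difference of the two disk areas and using $j \geq 0$ to absorb the lower-order terms, I would bound $|A^{\eta/2}|$ by a constant multiple of $C_0(C_0 + \eta)\,2^{2j}$, which gives the claimed $C_1 2^{2j}$. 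Taking $C_1$ to be the larger of the two resulting constants completes the argument.

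I do not anticipate any genuine obstacle here: the only point requiring care is the bookkeeping in the annulus computation, namely verifying that the cross terms and the $\eta^2$ contribution arising from squaring $C_0 2^{j+1} + \eta/2$ and $C_0 2^j - \eta/2$ are all dominated by $C_0(C_0+\eta)\,2^{2j}$ uniformly in $j \in \bN$. This is immediate because $2^j \geq 1$ for $j \geq 0$, so every term of lower order in $2^j$ is controlled by the leading $2^{2j}$ term.
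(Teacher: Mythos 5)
Your proposal is correct and follows essentially the same route as the paper's proof: both rely on the $\eta$-separation to make the balls $B_{\eta/2}(t)$ pairwise disjoint, compare their total area with that of the annulus enlarged by $\eta/2$, and absorb the lower-order term $C_0\eta\,2^j$ into $2^{2j}$ using $j\geq 0$. The only cosmetic differences are that you phrase the packing step as a general volume-comparison inequality and treat both estimates explicitly, whereas the paper proves the second and declares the first analogous.
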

    We now recast the inequality of Lemma~\ref{lem:illfour-coherence} in the form of Assumption~\ref{ass:coherence}. Choosing $N=\lceil C_0 2^{j_0}\rceil$, where $C_0=\theta^{-1/b}C^{1/b}$ as in Lemma~\ref{lem:illfour-balancing}, we consider the corresponding constant $C_1$ given by Lemma~\ref{lem:illfour-zestimate}. The normalization constant of the probability density is given by
\begin{align*}
    C_{\nu} \coloneqq \sum_{t\in\cD} \frac{1}{1+|t|^2} &=
    \sum_{\substack{ t\in Z \\ |t|\in[0,C_0) }} \frac{1}{1+|t|^2} +
    \sum_{j=0}^{j_0-1} \sum_{\substack{ t\in Z \\ |t|\in[C_0 2^j,C_0 2^{j+1}) }} \frac{1}{1+|t|^2} \\ &\leq
    C_1 + \sum_{j=0}^{j_0-1} \sum_{\substack{ t\in Z \\ |t|\in[C_0 2^j,C_0 2^{j+1}) }} \frac{1}{1+C_0^2 2^{2j}} \\ &\leq
    C_1 + \sum_{j=0}^{j_0-1} \frac{C_1 2^{2j}}{1+C_0^2 2^{2j}} \leq C_1 (j_0+1).
\end{align*}
We deduce that, for $N=C_0 2^{j_0}$, there exists $B\geq 1$, depending only on $\eta$, on the wavelet dictionary and on the upper frame bound, such that
\begin{align*}
    |F_t \phi_{j,n}| \leq B\sqrt{j_0} \frac{\sqrt{f_{\nu}(t)}}{2^{bj}},\qquad t\in Z\cap\cB_N,\ (j,n)\in\Gamma.
\end{align*}

\subsection{Recovery estimates}
We can now apply Theorem \ref{thm:abstract} with $\zeta=1$ and with the nonuniform bound assumption on the noise level.
\begin{theorem} \label{thm:four-nonu}
    Consider the lattice $Z$ and the wavelet dictionary $(\phi_{j,n})_{j,n\in\Gamma}$ defined in Sec.~\ref{sec:illfour-setting} and the corresponding analysis operator $\Phi\colon \cH_1\rightarrow\ell^2(\Gamma)$. Fix $j_0\in\bN$ and $b>0$.

    Consider a signal $u^{\dagger}\in L^2(\cB_1)$ satisfying \eqref{eq:ass-truncation-error2} for some $r\geq 0$ and let $t_1,\dots,t_m\in Z\cap\cB_N$, where $N=\lceil C^{1/b} 2^{j_0}\rceil$ as in Lemma~\ref{lem:illfour-balancing}, be i.i.d.\ samples drawn from the distribution with density $f_{\nu}$ proportional to $(1+|\cdot|^2)^{-1}$ with respect to the counting measure on $Z\cap\cB_N$. Let $y_k \coloneqq (1+|t_k|^2)^{-b/2}\widehat{u^{\dagger}}(t_k)+\varepsilon_k$ for $k=1,\dots,m$, with $|\varepsilon_k|\leq \lc f_{\nu}(t_k)\rc^{1/2}\beta$. Let $s\in\bN$ be such that $2\leq s\leq M_{\leq j_0}$.
    
    There exist constants $C_0,C_1,C_2,C_3>0$, which depend only on $\eta$, $\delta$, $R$ and on the wavelet dictionary, for which the following result holds.
    
    Let $W=\diag(2^{bj})_{(j,n)\in\Lambda_{\leq j_0}}$ and let $\widehat{u}$ be a solution of the minimization problem
    \begin{align}
        \min_{u} \|W^{-1}\Phi{u}\|_1\quad\colon\quad \frac{1}{m} \sum_{k=1}^m f_{\nu}(t_k)^{-1}|(1+|t_k|^2)^{-b/2}\widehat{u}(t_k)-y_k|^2 \leq
        \lc \beta+C_3 2^{-bj_0}r \rc^2,
    \end{align}
    where the minimum is taken over $\Span(\phi_{j,n})_{(j,n)\in\Lambda_{\leq j_0}}$.

    Let $\gamma\in(0,1)$. If
    \begin{align*}
        m\geq C_0 j_0 s\max\{j_0\log^3{(j_0 s)},\log(1/\gamma)\},
    \end{align*}
    then, with probability exceeding $1-\gamma$, the following recovery estimate holds:
    \begin{equation}
        \|u^{\dagger}-\widehat{u}\|_{L^2} \leq
        C_1 2^{bj_0}\frac{\sigma_s(W^{-1}P_{\Lambda_{\leq j_0}}\Phi{u^{\dagger}})_1}{\sqrt{s}} + C_2 ( 2^{bj_0}\beta + r ).
    \end{equation}
\end{theorem}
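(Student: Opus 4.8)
The plan is to verify, one at a time, the hypotheses of the nonuniform bound case of Theorem~\ref{thm:abstract} for the truncated forward map $F = P_N\circ U$, and then to invoke that theorem directly with the specialization $\zeta = 1$, tracking how its general parameters collapse to the quantities stated here.

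First I would establish the weak quasi-diagonalization of $F$. Proposition~\ref{prop:illfour-quasidiag} gives that the natural forward map $U$ satisfies the quasi-diagonalization property \eqref{eq:quasi-diag} with respect to $(\Phi,b)$, with some lower constant $c>0$. Fixing $\theta = \sqrt{c/2}$ and taking $N = \lceil C^{1/b}2^{j_0}\rceil$, one checks that this $N$ meets the threshold of Lemma~\ref{lem:illfour-balancing}, so that $P_N$ satisfies the balancing property with respect to $(U,\Phi,j_0,b,\theta)$ with $\theta^2 = c/2$. Proposition~\ref{prop:balancing} then yields that $F = P_N\circ U$ satisfies the weak quasi-diagonalization property \eqref{eq:weak-quasi-diag1}--\eqref{eq:weak-quasi-diag2} with respect to $(\Phi,j_0,b)$, which is exactly the relaxation of Assumption~\ref{ass:quasi-diag} allowed in the hypotheses of Theorem~\ref{thm:abstract}.

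Next I would recast the coherence estimate. Combining Lemma~\ref{lem:illfour-coherence} with the lattice counting of Lemma~\ref{lem:illfour-zestimate}, the normalization constant of the density $f_\nu\propto(1+|\cdot|^2)^{-1}$ on $\cD = Z\cap\cB_N$ obeys $C_\nu\leq C_1(j_0+1)$, as computed in Section~\ref{subsec:illfour-coherence}; this gives $|F_t\phi_{j,n}|\leq B\sqrt{j_0}\,\sqrt{f_\nu(t)}\,2^{-bj}$ for $t\in Z\cap\cB_N$. This is precisely the weaker coherence bound \eqref{eq:ass_coherence_mod} of Remark~\ref{rem:ass_coherence_mod}, with $d = b$ and with an effective coherence constant $B\sqrt{j_0}$ that depends on $j_0$. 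The $j_0$-dependence is the logarithmic loss produced by summing $(1+|t|^2)^{-1}$ over $Z\cap\cB_N$, and it is exactly why the weaker form of the coherence assumption is indispensable here --- Assumption~\ref{ass:coherence} with a scale-uniform constant is not available. Since in the nonuniform case Assumption~\ref{ass:prob_lower_bound} is not required, the hypotheses of Theorem~\ref{thm:abstract} are met once one also records that $u^\dagger$ satisfies \eqref{eq:ass-truncation-error2} and that the noise obeys \eqref{eq:noise_nonunif_bound}, both of which are assumed; the minimization problem and the noise-plus-truncation budget $(\beta + C_3 2^{-bj_0}r)^2$ coincide with those of the abstract statement.

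Finally I would translate the abstract conclusion. With $\zeta = 1$ and $d = b$, the definition of $\tau$ collapses to $\tau = (B\sqrt{j_0})^2\,2^{2(b-d)j_0}\,2^{2(1-\zeta)bj_0}\,s = B^2 j_0 s \asymp j_0 s$. Inserting this into the sample-complexity bound \eqref{eq:sample-complexity}, and using that in dimension two $M_{\leq j_0}\asymp 2^{2j_0}$ so that $\log M_{\leq j_0}\asymp j_0$, produces exactly $m\gtrsim j_0 s\max\{j_0\log^3(j_0 s),\log(1/\gamma)\}$. For the error bound, the second recovery estimate of the nonuniform case reads $\|x^\dagger-\widehat x\|_2\leq C_1 2^{\zeta bj_0}\sigma_s(W^{-\zeta}P_{\leq j_0}x^\dagger)_1/\sqrt{s} + C_2(2^{bj_0}\beta+r)$; setting $\zeta = 1$, writing $x^\dagger=\Phi u^\dagger$, and using that $\Phi$ is an isometry, so that $\|x^\dagger-\widehat x\|_2 = \|u^\dagger-\widehat u\|_{L^2}$, gives the stated estimate verbatim. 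The only genuinely delicate point is the bookkeeping of the $j_0$-dependent coherence constant through $\tau$ and the logarithmic factors; once the weaker coherence form \eqref{eq:ass_coherence_mod} is in place, the remainder is a direct specialization of Theorem~\ref{thm:abstract}.
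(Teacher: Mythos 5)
Your proposal is correct and follows essentially the same route as the paper's own proof: verify weak quasi-diagonalization of $F=P_N\circ U$ via Proposition~\ref{prop:illfour-quasidiag}, Lemma~\ref{lem:illfour-balancing} and Proposition~\ref{prop:balancing}, recast the coherence bound of Lemma~\ref{lem:illfour-coherence} using the counting estimate of Lemma~\ref{lem:illfour-zestimate}, and then specialize Theorem~\ref{thm:abstract} (nonuniform case, $\zeta=1$). In fact, your write-up is more explicit than the paper's brief verification, correctly flagging both the need for the weaker coherence form \eqref{eq:ass_coherence_mod} due to the $j_0$-dependent constant $B\sqrt{j_0}$, and the bookkeeping $\tau\asymp j_0 s$, $\log M_{\leq j_0}\asymp j_0$ that yields the stated sample complexity.
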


\begin{remark}
\label{rem:illfour-nonunif-trunc}
Note that the nonuniform bound \eqref{eq:ass-truncation-error2} on the truncation error has the following form: 
\begin{align*}
    \sup_{t\in Z} C j_0^{1/2} (1+|t|^2)^{(1-b)/2} |\widehat{u_R}(t)| \leq r.
\end{align*}
where $u_R= \Phi^* P_{\leq j_0}^{\perp}\Phi u^{\dagger}$. It is natural to investigate conditions such that
\begin{align*}
    \sup_{t\in Z} (1+|t|^2)^{(1-b)/2} |\widehat{u_R}(t)| \leq r
\end{align*}
holds, neglecting the factor $j_0$ for simplicity. If $b\geq 1$, then
\begin{align*}
    \sup_{t\in Z} (1+|t|^2)^{(1-b)/2} |\widehat{u_R}(t)| \leq
    \Big( \sum_{t\in Z}|\widehat{u_R}(t)|^2 \Big)^{1/2} \lesssim \|u_R\|_{\cH_1}
\end{align*}
by the frame property. Therefore, if $\|u_R\|_{\cH_1}\leq r$, then condition \eqref{eq:ass-truncation-error2} holds with respect to the same $r$, up to a constant.

If $b\in(0,1)$ instead, we get that
\begin{align*}
    \sup_{t\in Z} (1+|t|^2)^{(1-b)/2} |\widehat{u_R}(t)| \leq
    \Big( \sum_{t\in Z} (1+|t|^2)^{1-b}|\widehat{u_R}(t)|^2 \Big)^{1/2}.
\end{align*}
By way of arguments similar to the ones presented in the proof of Proposition~\ref{prop:illfour-quasidiag}, we infer 
\begin{align*}
    \Big( \sum_{t\in Z} (1+|t|^2)^{1-b}|\widehat{u_R}(t)|^2 \Big)^{1/2} \lesssim \|u_R\|_{H^{1-b}(B_R)}.
\end{align*}
In particular, if $\|u_R\|_{H^{1-b}(B_R)}\leq r$, then condition \eqref{eq:ass-truncation-error2} holds (up to a constant) with respect to the same $r$.
\end{remark}

\section{Proofs}\label{sec-proofs}

\subsection{Proofs of the abstract results}
\begin{proof}[Proof of Theorem \ref{thm:abstract}]
The result in the case of the uniform bound on the noise \eqref{eq:noise_unif_bound} corresponds to \cite[Theorem 3.11]{split1}, up to minor modifications. Indeed, notice that in the present version of the Theorem we have dropped the assumption \begin{align*}
    \sup_{t\in\cD} \|F_t\|\leq C_F,
\end{align*}
which was convenient in the context of \cite{split1}, and replaced it by
\begin{align*}
    \sup_{t\in\cD} \|F_t \Phi^* P_{\leq j_0}^{\perp} x^{\dagger}\|_{\cH_2}\leq r.
\end{align*}
As remarked in \cite[Section 2.2]{split1}, the uniform bound on $\|F_t\|$ affects only the truncation error; a careful inspection of \cite[Proposition 5.8]{split1} shows that the estimates hold also with the new assumption -- see also \cite[Remark 5.9]{split1}.

To obtain the result in the case of the non-uniform bound on the noise \eqref{eq:noise_nonunif_bound}, we consider the measurement operators given by $F_t'\coloneqq f_{\nu}(t)^{-1/2}F_t$. The nonuniform bound on the noise \eqref{eq:noise_nonunif_bound} for the measurements $F_t$ is thus equivalent to a uniform bound on the noise for the measurements $F_t'$. Condition \eqref{eq:ass-truncation-error} still holds after replacing $F_t$ with $F_t'$. We define the corresponding forward map as $F'u(t)\coloneqq F_t'{u}$ and set $\diff{\mu}'\coloneqq f_{\nu}\diff{\mu}$. Then, 
\begin{align*}
    \|F'u\|_{L_{\mu'}(\cD;\cH_2)}^2 = \int_{\cD} |f_{\nu}(t)|^{-1} \|F_t{u}\|_{\cH_2}^2 \diff{\mu}'(t) = \int_{\cD} \|F_t{u}\|_{\cH_2}^2 \diff{\mu}(t)=\|Fu\|_{L_{\mu}(\cD;\cH_2)}^2.
\end{align*}
In particular, $F'$ is well defined and Assumption \ref{ass:quasi-diag} is still satisfied. Moreover, the coherence bounds in Assumption \ref{ass:coherence} are still satisfied by the measurements $F_t'$ with respect to the probability density $f_{\nu}'= 1$. Finally, the probability density $f_{\nu}'$ trivially satisfies Assumption \ref{ass:prob_lower_bound} with respect to $c_{\nu}'=1$.

Applying the result in the case of the uniform bound to the measurements $F_t'$, the result follows.
\end{proof}

\begin{proof}[Proof of Proposition \ref{prop:balancing}]
Clearly, the quasi-diagonalization of $U$ and the fact that $P$ is a projection imply that
\begin{align*}
    \|F\Phi^*x\|_{L_{\mu}^2(\cD;\cH_2)}^2 =
    \|PU\Phi^*x\|_{\cH}^2 \leq
    \|U\Phi^* x\|_{\cH}^2 \leq
    C \sum_{(j,n)\in\Gamma} 2^{-2bj}|x_{j,n}|^2.
\end{align*}
This proves the first inequality of the weak quasi-diagonalization property of $F$.

In order to prove the second inequality, we consider $x\in\ell^2(\Lambda_{\leq j_0})$ and exploit the balancing property and the quasi-diagonalization of $U$ as follows:
\begin{align*}
    \|F\Phi^*x\|_{L_{\mu}^2(\cD;\cH_2)}^2 &=
    \|U\Phi^*x\|_{\cH}^2 -
    \|P^{\perp}U\Phi^*x\|_{\cH}^2 \\ &\geq
    c\sum_{(j,n)\in\Lambda_{\leq j_0}} 2^{-2bj} |x_{j,n}|^2 - \frac{c}{2} 2^{-2bj_0} \|x\|^2 \\ &=
    c\sum_{(j,n)\in\Lambda_{\leq j_0}} 2^{-2bj} |x_{j,n}|^2 - \frac{c}{2} \sum_{(j,n)\in\Lambda_{\leq j_0}} 2^{-2bj_0}|x_{j,n}|^2 \\ &\geq
    \frac{c}{2}\sum_{(j,n)\in\Lambda_{\leq j_0}} 2^{-2bj} |x_{j,n}|^2.
\end{align*}
This proves the second inequality and therefore the Proposition.
\end{proof}

\subsection{Proofs for sparse deconvolution}

\begin{proof}[Proof of Lemma~\ref{lem:deconv_balancing}]
The Bessel kernel $\kappa_b$ can be explicitly characterized --- see, for instance, \cite{AS}. We have that $\kappa_b\in C(\bR^2)$ and 
\begin{equation}
\label{eq:kernel_decay}
    |\kappa_b(x)| \leq C_1 e^{-C_2|x|},
\end{equation}
for constants $C_1,C_2>0$ that depend on $b$. Thus,  for $u\in \cH_1$  (hence compactly supported in $K$), we have that
\begin{align*}
    |u\ast\kappa_b(x)| &\leq
    \int_{K} |\phi(y)| |\kappa_b(x-y)|\diff{y} \\ &\leq
    C_1 \int_{K} |u(y)|e^{-C_2|x-y|}\diff{y} \\ &\leq
    C_1 e^{-C_2 |x|} \int_{K} |u(y)|e^{C_2|y|}\diff{y} \\ &\leq
    C_1 e^{-C_2 |x|} \lc \int_{K} e^{2C_2|y|}\diff{y} \rc^{1/2} \|u\|_{\cH_1}. 
\end{align*}
Therefore,
\begin{equation}
    \label{eq:decay_conv}
    |u\ast\kappa_b(x)|\leq
    C_1' e^{-C_2|x|} \|u\|_{\cH_1},\qquad x\in\bR^2,
\end{equation}
where the constant $C_1'$ depends only on $b$ and $K$. In particular, 
\[
\|P_N^{\perp}U u\|_{L^2(\bR^2)}^2 =\int_{\cB_N^c} |u\ast\kappa_b(x)|^2\,\diff{x} \le {C'_1}^2 \int_{\cB_N^c} e^{-2C_2|x|} \,\diff{x} \|u\|_{\cH_1}^2
\le {C''_1}^2  e^{-2C_2'N} \|u\|_{\cH_1}^2
\]
where $C_1',C_2'$  depend only on $K$ and $b$. In other words, $\|P_N^{\perp} U\Phi^*\iota_{\le j_0}\|_{\cH_1\to L^2(\bR^2)}^2 \leq C_1'' e^{-C_2' N}$.

Recall that $P_N$ satisfies the balancing property with respect to $(U,\Phi,j_0,b,\theta)$ if
\[
\|P_N^{\perp} U\Phi^*\iota_{\le j_0}\|_{\cH_1\to L^2(\bR^2)}^2 \leq \theta^2 2^{-2bj_0}.
\]
Thus, it is sufficient to show that
\[
 C_1'' e^{-C_2'N} \leq \theta^2 2^{-2bj_0},
\]
which follows from the bound
\begin{align*}
    N \geq C_3 \lc j_0 + \log(1/\theta) + 1 \rc,
\end{align*}
where $C_3>0$ depends on $C_1',C_2'$ and $b$.
\end{proof}

\begin{proof}[Proof of Theorem~\ref{thm:deconv}]
Proposition~\ref{prop:balancing} applied to \eqref{eq:deconv-quasi-diag} and Lemma~\ref{lem:deconv_balancing} imply that $F$ satisfies the weak quasi-diagonalization property with respect to $(\Phi,j_0,b)$, provided that $N$ is chosen large enough. Moreover, as outlined in Section~\ref{sec:deconv-coherencebounds-probdistr}, the estimate \eqref{eq:deconv_coherence} can be put in the form of Assumption~\ref{ass:coherence} with $B$ and $c_{\nu}$ as in \eqref{eq:deconv-bcnu-est}.

The claim then follows after applying Theorem~\ref{thm:abstract} in the case of uniform bounds on the noise and the truncation error.
\end{proof}

\begin{proof}[Proof of Lemma~\ref{lem:deconv_alternative_coherence}]
In view of \eqref{eq:decay_conv} applied with $u=\phi_{j,n}$, we have the following coherence bound for the wavelet dictionary
$(\phi_{j,n})_{(j,n)\in\Gamma}$:
\begin{align*}
    |F_t{\phi_{j,n}}| \leq C_1' e^{-C_2|t|}.
\end{align*}
On the other hand, recall from \eqref{eq:deconv_coherence} the following coherence bounds for some constant $B_0$ depending only on the wavelet dictionary:
\begin{equation}
    |F_t\phi_{j,n}|\leq B_0 \frac{1}{2^{(b-1)j}}.
\end{equation}
Now let $\alpha\in(0,1]$. We conclude that
\begin{equation}
\label{eq:deconv_new_coherence}
    |F_t{\phi_{j,n}}| \leq B_1 \frac{e^{-C_2\alpha|t|}}{2^{(1-\alpha)(b-1)j}},
\end{equation}
where $B_1=\max(C_1',B_1)$. 

Let 
\begin{align*}
    C \coloneqq \int_{\bR^2} e^{-2 C_2|x|}\,\diff{x},
\end{align*}
which clearly depends only on $C_2$. By a change of variable, we obtain $C\alpha^{-2}=\|e^{-2 C_2\alpha|\cdot|}\|_{L^1}$. We can then recast \eqref{eq:deconv_new_coherence} in the form of Assumption~\ref{ass:coherence} as
\begin{align*}
    |F_t{\phi_{j,n}}| \leq B\alpha^{-1} \frac{\sqrt{f_{\nu}(t)}}{2^{dj}},
\end{align*}
with $B=B_1 C^{1/2}$, $d=(1-\alpha)(b-1)$ and $f_{\nu}$ is the probability density proportional to $e^{-2C_2\alpha|\cdot|}$ with respect to the Lebesgue measure on $\bR^2$.
\end{proof}

\begin{proof}[Proof of Theorem~\ref{thm:deconv2}] 
We verify the conditions required to apply Theorem~\ref{thm:abstract} with nonuniform 
bounds on the noise and truncation error.

First, the quasi-diagonalization property of $F$ with respect to $(\Phi,b)$ follows 
directly from \eqref{eq:deconv-quasi-diag}. Second, Lemma~\ref{lem:deconv_alternative_coherence} 
establishes that Assumption~\ref{ass:coherence} holds with $d=(1-\alpha)(b-1)$.

For the nonuniform bound \eqref{eq:ass-truncation-error2}, we need to verify:
\begin{align*}
    \sup_{t\in\cD} f_{\nu}(t)^{-1/2} \|F_t \Phi^* P_{\leq j_0}^{\perp}x^{\dagger}\|_{\cH_2}\leq r.
\end{align*}
From \eqref{eq:decay_conv}, we know that for every $u\in\cH_1$,
\begin{align*}
    |u\ast\kappa_b(x)|\leq C e^{-C_2|x|} \|u\|_{\cH_1},\qquad x\in\bR^2,
\end{align*}
where $C_2$ is from \eqref{eq:decay_conv} and $C \geq 1$ depends only on $b$ and $K$. 
Since $f_{\nu}$ is proportional to $e^{-2C_2\alpha|\cdot|}$ with $\alpha=\eta/(b-1)$, 
we obtain
\begin{align*}
    \sup_{t\in\bR^2} f_{\nu}(t)^{-1/2} |P_{\leq j_0}^{\perp}u^{\dagger}\ast\kappa_b(x)| 
    \leq C\alpha^{-1} \|P_{\leq j_0}^{\perp}u^{\dagger}\|_{\cH_1} \leq C (b-1) r,
\end{align*}
where the last inequality uses our hypothesis $\|P_{\leq j_0}^{\perp}u^{\dagger}\|_{\cH_1}\leq \eta r$.

Therefore, condition \eqref{eq:ass-truncation-error2} holds with $C(b-1)r$ in place of $r$. 
The conclusion then follows from Theorem~\ref{thm:abstract}.
\end{proof}

\begin{proof}[Proof of the estimates in Section \ref{sec:deconv_cartoon_like}]
We first introduce the convenient notation \[ \tilde{m}\coloneqq m/(C_0 j_0\log^3{\tau}),\] where $C_0$ is the constant appearing in Theorem \ref{thm:deconv} and $\tau\coloneqq j_0^2 2^{2j_0}s$ for some $s\geq 2$. Notice that, by the choice of $j_0$, we have $j_0\asymp \log(1/\beta)$ and 
\begin{align*}
    \log{\tau} \asymp \log{j_0}+j_0+\log{s} \asymp j_0+\log{M_{\leq j_0}}\asymp j_0 \asymp \log(1/\beta).
\end{align*}
We conclude that
\begin{align}
\label{eq:rel-m-mtilde}
    \tilde{m} \asymp m/\log^4(1/\beta).
\end{align}
Suppose that $\beta$ is sufficiently small so that $(\log^3{\tau})\, j_0\geq \log(1/\gamma)$ --- recall that $j_0\asymp\log(1/\beta)$.
By Theorem \ref{thm:deconv}, if
\begin{equation}
\label{eq:tildem_sparsity}
    \tilde{m} = \lceil j_0^2 2^{2j_0} s \rceil,
\end{equation}
then, with probability exceeding $1-\gamma$, the following error estimate holds:
\begin{equation}
    \|u^{\dagger}-\widehat{u}\|_{L^2} \leq
    C_1 2^{bj_0}\frac{\sigma_s(W^{-1}P_{\leq j_0}\Phi{u^{\dagger}})_1}{\sqrt{s}} + C_2 j_0 ( 2^{bj_0}\beta + j_0 r ).
\end{equation}
Take now $s_1,\dots,s_{j_0}\in\bN$ such that $s_1+\dots+s_{j_0}=s$. We have
\[
\begin{split}
   \sigma_s(W^{-1}P_{\leq j_0}\Phi{u^{\dagger}})_1 
   &= \inf\{\| (W^{-1}P_{\leq j_0}\Phi{u^{\dagger}})_{\Lambda_{\leq j_0}\setminus S}\|_1 : |S|\leq s\}
   \\ &\leq \inf\left\{\sum_{j=1}^{j_0}\| (P_j W^{-1}P_{\leq j_0}\Phi{u^{\dagger}})_{\Lambda_{j}\setminus S_j}\|_1 : S_j\subseteq \Lambda_j,\ |S_j|\leq s_j\right\}
   \\ &=\sum_{j=1}^{j_0}   2^{-bj}\inf\left\{\| (P_j \Phi{u^{\dagger}})_{\Lambda_{j}\setminus S_j}\|_1 : S_j\subseteq \Lambda_j,\ |S_j|\leq s_j\right\}
   \\ &=\sum_{j=1}^{j_0}   2^{-bj}\sigma_{s_j}(P_j \Phi{u^{\dagger}})_1.
\end{split}
\]
Therefore
\begin{equation}
    \label{eq:sparsity_est}
    \|u^{\dagger}-\widehat{u}\|_{L^2} \leq \frac{C_1}{\sqrt{s}} 2^{bj_0} \sum_{j=1}^{j_0} 2^{-bj} \sigma_{s_j}(P_{j}\Phi u^{\dagger})_1 + C_2 j_0 ( 2^{bj_0}\beta + j_0 r ).
\end{equation}

Following the arguments used in \cite[Section 9.3.1]{Ma}, we have that $\|\phi_{j,n}\|_{L^2}=1$ and $|\supp(\phi_{j,n})|\lesssim C2^{-2j}$, and therefore
\begin{align*}
    |\langle u^{\dagger},\phi_{j,n} \rangle| \leq \|u^{\dagger}\|_{L^{\infty}} \|\phi_{j,n}\|_{L^1} \lesssim 2^{-j}.
\end{align*}
On the other hand, in view of the Littlewood-Paley property of wavelets with $b=-2$ --- see Proposition~\ref{prop:littlewood-paley} in Appendix~\ref{appendix:wavelets} --- we have that, if $\supp(\phi_{j,n})$ does not intersect the discontinuities of $u^\dagger$, then, choosing a smooth cutoff $\eta$ supported in the vicinity of $\supp(\phi_{j,n})$,
\begin{align*}
    |\langle u^{\dagger},\phi_{j,n} \rangle| = |\langle u^{\dagger}\eta,\phi_{j,n} \rangle| \leq
    \|u^{\dagger}\eta\|_{H^2} \|\phi_{j,n}\|_{H^{-2}}\lesssim 2^{-2j}.
\end{align*}
Let $\Sigma_j^1$ be the set of indices of wavelets at scale $j$ whose support intersects the discontinuities of $u^\dagger$, and  $\Sigma_j^2$ be the set of indices of wavelets at scale $j$ whose support does not intersects the discontinuities of $u^\dagger$. It is possible to see, arguing as in \cite[Section 9.3.1 --- \textit{NonLinear Approximation of Piecewise Regular Images}]{Ma}, that $|\Sigma_j^1|\asymp 2^j$ and $|\Sigma_j^2|\asymp 2^{2j}$. We can then compute the $\ell^1$-norm of $P_{j}\Phi u^{\dagger}$ for every $j$:
\begin{align*}
    \|P_{j}\Phi u^{\dagger}\|_1 &=
    \sum_{(j,n)\in\Sigma_j^1} |\langle u^{\dagger},\phi_{j,n} \rangle| + \sum_{(j,n)\in\Sigma_j^2} |\langle u^{\dagger},\phi_{j,n} \rangle| \\ &\lesssim
    2^j 2^{-j} + 2^{2j} 2^{-2j} = 2.
\end{align*}
We conclude that $\sigma_{s_j}(P_{j}\Phi u^{\dagger})_1 \leq \|P_{j}\Phi{u^{\dagger}}\|_1 \lesssim 1$. A similar argument gives $r\lesssim 2^{-j_0/2}$. The implicit constants appearing in the bounds for the non-linear approximation error depend both on the wavelet basis and on parameters defining the cartoon-like images class.

Plugging these estimates into \eqref{eq:sparsity_est}, we get
\begin{align*}
    \|u^{\dagger}-\widehat{u}\|_{L^2} %&\leq  \frac{C_1}{\sqrt{s}} 2^{bj_0} \sum_{j\leq j_0} 2^{-bj} \sigma_{s_j}(P_{\Lambda_{j}}\Phi u^{\dagger})_1 + C_2 j_0 ( 2^{bj_0}\beta + j_0 r ) \\ 
    &\lesssim
    \frac{1}{\sqrt{s}} 2^{bj_0} \sum_{j\leq j_0} 2^{-bj} + j_0 ( 2^{bj_0}\beta + j_0 2^{-j_0/2} ) \\ &\lesssim
    \frac{2^{bj_0}}{\sqrt{s}} + j_0^2 ( 2^{bj_0}\beta + 2^{-j_0/2} ).
\end{align*}
Imposing that $2^{bj_0}\beta \asymp 2^{-j_0/2}$ we get $2^{bj_0}\asymp\beta^{-b/(b+1/2)}$, and therefore
\begin{align*}
    \|u^{\dagger}-\widehat{u}\|_{L^2} \lesssim \frac{\beta^{-2b/(2b+1)}}{\sqrt{s}} + \log^2(1/\beta) \beta^{1/(2b+1)}.
\end{align*}
Using relation \eqref{eq:tildem_sparsity}, we get
\begin{align*}
    \|u^{\dagger}-\widehat{u}\|_{L^2} \lesssim \frac{\beta^{-2\frac{b+1}{2b+1}}\log(1/\beta)}{\tilde{m}^{1/2}}+\log^2(1/\beta) \beta^{\frac{1}{2b+1}}.
\end{align*}
Moreover, if $\tilde{m}= \beta^{ -2\frac{2b+3}{2b+1} }\log^{-2}(1/\beta)$, then
\begin{align*}
    \|u^{\dagger}-\widehat{u}\|_{L^2} \lesssim \log^2(1/\beta) \beta^{1/(2b+1)}.
\end{align*}
The sample complexity for $m$ follows from \eqref{eq:rel-m-mtilde}.
\end{proof}

\subsection{Proofs for the sparse inverse source problem}
The proofs of the results in this section resort to standard techniques from elliptic PDE theory --- we address the interested reader to classical references such as \cite{Ev, gilbargtrudinger, necas, lions}. Nevertheless, since we were not able to locate in the literature the exact estimates that are needed for our purposes, we decided to report detailed proofs for the sake of completeness.
\begin{proof}[Proof of Lemma~\ref{lem:elliptic-sobolev}]
We first prove that
\begin{equation}
\label{eq:bound1-sobolev}
    C_1\|u\|_{H^{-2}(\Omega)} \leq \|Fu\|_{L^2(\Omega)}.
\end{equation}
Indeed, let $u\in\cH_1$ and let $v\in H_0^2(\Omega)$. We have that
\begin{align*}
    \langle u,v \rangle &=
    \int_{\Omega} \sigma\nabla(Fu)\cdot\nabla{v} \\ &=
    \int_{\partial\Omega} Fu(\sigma\nabla{v}\cdot\nu) -
    \int_{\Omega} Fu\, \Div(\sigma\nabla{v}) \\ &=
    -\int_{\Omega} Fu\, (\nabla{\sigma}\cdot\nabla{v}+\sigma\Delta{v}) \\ &\leq
    2C_{\sigma}\|v\|_{H_0^2(\Omega)} \|Fu\|_{L^2(\Omega)}.
\end{align*}
Taking the supremum over $v\in H_0^2(\Omega)$ with $\|v\|_{H_0^2(\Omega)}\leq 1$ implies \eqref{eq:bound1-sobolev}.

We now prove that
\begin{equation}
\label{eq:bound2-sobolev}
    \|F{u}\|_{L^2(\Omega)} \leq C_2 \|u\|_{H^{-2}(\Omega)}.
\end{equation}
Indeed, let $u\in\cH_1$ and let $v\in L^2(\Omega)$. By standard elliptic PDE theory \cite[Theorem 7.25]{gilbargtrudinger}, there exists a bounded extension operator $E\colon H^2(\Omega)\rightarrow H^2(\bR^2)$ such that
\begin{equation}
\label{eq:extension_op}
    Ew|_{\Omega} \equiv w,\qquad \|Ew\|_{H^2(\bR^2)}\leq C\|w\|_{H^2(\Omega)},
\end{equation}
where $C=C(\Omega)$. We have that
\begin{align*}
    \langle Fu,v \rangle &=
    \langle Fu,-\Div(\sigma\nabla{Fv}) \rangle \\ &=
    \int_{\Omega} \sigma\nabla(Fu)\cdot\nabla{Fv} \\ &=
    \int_{\Omega} u\, Fv \\ &=
    \int_{\bR^2} u\, EFv \\ &\leq
    \|u\|_{H^{-2}(\bR^2)} \|EFv\|_{H^2(\bR^2)} \\ &\leq
    C\|u\|_{H^{-2}(\bR^2)} \|Fv\|_{H^2(\Omega)} \\ &\leq
    C\|u\|_{H^{-2}(\bR^2)} \|v\|_{L^2(\Omega)},
\end{align*}
where we have used \eqref{eq:h2reg} in the last inequality; here $C$ depends on the norm of the extension operator and on the stability constant in \eqref{eq:h2reg}. Taking the supremum over $v\in L^2(\Omega)$ with $\|v\|_{L^2(\Omega)}\leq 1$, we conclude that
\begin{align*}
    \|Fu\|_{L^2(\Omega)} \leq C\|u\|_{H^{-2}(\bR^2)}.
\end{align*}
In order to prove \eqref{eq:bound2-sobolev}, we now show that, for all $u\in L^2(\bR^2)$ with $\supp(u)\subset K$,
\begin{align*}
    \|u\|_{H^{-2}(\bR^2)} \leq C\|u\|_{H^{-2}(\Omega)}
\end{align*}
for some constant $C>0$. Indeed, let $v\in H^2(\bR^2)$. Consider a smooth bump function $\eta\in C^{\infty}(\bR^2)$ such that
\begin{align*}
    \eta|_K \equiv 1,\qquad \supp(\eta)\subset\Omega.
\end{align*}
It follows that $\eta v\in H_0^2(\Omega)$. Moreover, we have that
\begin{align*}
    \langle u,v \rangle &= 
    \langle u,\eta v \rangle \leq
    \|u\|_{H^{-2}(\Omega)} \|\eta v\|_{H_0^2(\Omega)} \leq
    C \|u\|_{H^{-2}(\Omega)} \|v\|_{H^2(\bR^2)},
\end{align*}
where $C$ depends on $\eta$, and therefore only on $K$ and $\Omega$. Taking the supremum over all $v\in H^2(\bR^2)$ with $\|v\|_{H^2(\bR^2)}\leq 1$, we conclude.
\end{proof}

\begin{proof}[Proof of Lemma~\ref{lem:elliptic-coherence}]

    By standard elliptic PDE theory, we have that a solution of \eqref{eq:ell_probl} exists for $u\in H^{-1}(\Omega)$ and that the following estimate holds:
    \begin{align*}
        \|F{u}\|_{H^1(\Omega)} \leq C_1\|u\|_{H^{-1}(\Omega)},
    \end{align*}
    where $C_1=C_1(\Omega,C_{\sigma},\lambda)$. By regularity theory \cite[Theorem 9.25]{brezis}, we have that a solution of \eqref{eq:ell_probl} for $u\in L^2(\Omega)$ is actually in $H^2(\Omega)$ and that
    \begin{align*}
        \|Fu\|_{H^2(\Omega)} \leq C_2\|u\|_{L^2(\Omega)},
    \end{align*}
    where $C_2=C_2(\Omega,C_{\sigma},\lambda)$.

    By standard  elliptic PDE theory \cite[Theorem 7.25]{gilbargtrudinger}, there exists a bounded extension operator $E\colon H^1(\Omega)\rightarrow H^1(\bR^2)$ such that $E(H^2(\Omega))\subset H^2(\bR^2)$ and
    \begin{equation}
    Ew|_{\Omega} \equiv w,\quad \|Ew\|_{H^1(\bR^2)}\leq C\|w\|_{H^1(\Omega)},\quad \|Ew\|_{H^2(\bR^2)}\leq C\|w\|_{H^2(\Omega)},
    \end{equation}
    where $C=C(\Omega)$. We conclude that
    \begin{align*}
        \|EF{u}\|_{H^1(\bR^2)} \leq C_1'\|u\|_{H^{-1}(\Omega)},\quad
        \|EF{u}\|_{H^2(\bR^2)} \leq C_2'\|u\|_{L^2(\Omega)},
    \end{align*}
    where the constants $C_1',C_2'$ have the same dependencies of $C_1,C_2$ on the setting of the problem. Clearly, we have that $\|u\|_{L^2(\Omega)}\leq\|u\|_{L^2(\bR^2)}$.
    
    The following bound holds for $u\in L^2(\bR^2)$:
    \begin{align*}
        \|u\|_{H^{-1}(\Omega)} \leq \|u\|_{H^{-1}(\bR^2)}.
    \end{align*}
    Indeed, we have that, if $v\in H_0^1(\Omega)$, then  $\|\tilde v\|_{H^1(\bR^2)} = \|v\|_{H_0^1(\Omega)}$, where $\tilde v\in H^1(\bR^2)$ is the extension by $0$ of $v$ to  $\bR^2$. Therefore
    \begin{align*}
        |\langle u,v \rangle_{H^{-1}(\Omega)\times H^1_0(\Omega)}| &=|\langle u,\tilde v \rangle_{H^{-1}(\bR^2)\times H^1_0(\bR^2)}| \\ &\leq \|u\|_{H^{-1}(\bR^2)} \|\tilde v\|_{H^1(\bR^2)} \\ &= \|u\|_{H^{-1}(\bR^2)} \|v\|_{H_0^1(\Omega)}.
    \end{align*}
    Taking the supremum over all $v\in H_0^1(\Omega)$ with $\|v\|_{H_0^1(\Omega)}\leq 1$ proves the claim.
    
    We conclude that, for $u\in L^2(\bR^2)$,
    \begin{align*}
        \|EF{u}\|_{H^1(\bR^2)} \leq C_1'\|u\|_{H^{-1}(\bR^2)},\quad
        \|EF{u}\|_{H^2(\bR^2)} \leq C_2'\|u\|_{L^2(\bR^2)}.
    \end{align*}
      In particular, if $\eta\in\cD(\bR^2)$ is a cutoff function with $\supp(\eta)\subset\Omega$, $\eta\in[0,1]$ and $\eta|_K\equiv 1$, we get that, for $u\in L^2(\bR^2)$,
    \begin{align*}
        \|EF(\eta u)\|_{H^{1}(\bR^2)}&\leq C_1'\|\eta u\|_{H^{-1}(\bR^2)} \leq C_1'' \|u\|_{H^{-1}(\bR^2)},\\
        \|EF(\eta u)\|_{H^2(\bR^2)} &\leq C_2' \|u\|_{L^2(\bR^2)}.
    \end{align*}
    where $C_1''$ have the same dependencies of $C_1'$ on the setting of the problem, with an additional dependency on $\eta$ and thus on $K$.
    
    Complex interpolation \cite[Theorem 4.1.2, Theorem 6.4.5]{bergh2012interpolation} applied to the couples $\lc H^{-1}(\bR^2),L^2(\bR^2)\rc$ and $\lc H^1(\bR^2),H^2(\bR^2) \rc$ yields, for every $\alpha\in(0,1)$ and for every $u\in H^{1+\alpha}(\bR^2)$ with $\supp(u)\subset K$,
    \begin{align*}
        \|EFu\|_{H^{1+\alpha}(\bR^2)} \leq C\|u\|_{H^{-1+\alpha}(\bR^2)},
    \end{align*}
    where the constant $C$ can be chosen as $C=\max(C_1'',C_2')$, for instance.

    By Sobolev embedding \cite[Theorem 7.17]{gilbargtrudinger}, we have that $H^{1+\alpha}(\bR^2)\subset C(\bR^2)$ is a continuous inclusion, where the operator norm of the inclusion can be bounded by a constant that scales with $\alpha$ as $\alpha^{-1/2}$. Indeed, we have that, for $u\in H^{1+\alpha}(\bR^2)$, 
    \begin{align*}
        \|u\|_{L^{\infty}(\bR^2)} &\leq
        \|\widehat{u}\|_{L^1} =
        \int_{\bR^2} (1+|\xi|^2)^{(1+\alpha)/2}|\widehat{u}(\xi)|
        \frac{1}{(1+|\xi|^2)^{(1+\alpha)/2}}\,\diff{\xi} \\ &\leq
        \lc \int_{\bR^2} \frac{1}{(1+|\xi|^2)^{(1+\alpha)}}\,\diff{\xi} \rc^{1/2} \|u\|_{H^{1+\alpha}(\bR^2)} \\ &=
        \sqrt{\pi} \lc \int_0^{+\infty} \frac{1}{(1+t)^{1+\alpha}}\,\diff{t} \rc^{1/2} \|u\|_{H^{1+\alpha}(\bR^2)} \\ &=
        \sqrt{\pi} \alpha^{-1/2} \|u\|_{H^{1+\alpha}(\bR^2)}.
    \end{align*}
    We obtain that, for $u\in L^2(\bR^2)$ with $\supp(u)\subset K$,
    \begin{align*}
        \|F{u}\|_{L^{\infty}(\Omega)} & \leq
        \|EF{u}\|_{L^{\infty}(\bR^2)} \\ &\leq
        C\alpha^{-1/2} \|EF{u}\|_{H^{1+\alpha}(\bR^2)} \\ &\leq
        C\alpha^{-1/2} \|u\|_{H^{-1+\alpha}(\bR^2)}.
    \end{align*}
    Using the Littlewood-Paley property of wavelets (see Prop.~\ref{prop:littlewood-paley}), we conclude that
    \begin{align*}
        |F_t{\phi_{j,n}}| &\leq \|F{\phi_{j,n}}\|_{L^{\infty}(\Omega)} \\ &\leq C\alpha^{-1/2} \|\phi_{j,n}\|_{H^{-1+\alpha}(\Omega)} \\ &\leq B\alpha^{-1/2}\frac{1}{2^{(1-\alpha)j}},
    \end{align*}
    as desired.
\end{proof}

\begin{proof}[Proof of Theorem~\ref{thm:inverse-source}] 
We verify the conditions required to apply Theorem~\ref{thm:abstract}. First, 
Lemma~\ref{lem:elliptic-sobolev} establishes that the forward operator satisfies 
the quasi-diagonalization property with parameter $b=2$ with respect to the analysis 
operator $\Phi$. Additionally, the necessary coherence bounds for the measurement 
operators are provided by Lemma~\ref{lem:elliptic-coherence}. With these key 
conditions satisfied, the recovery guarantees follow directly from 
Theorem~\ref{thm:abstract}.
\end{proof}

\subsection{Proofs for Fourier subsampling with ill-posedness}

We first introduce some notation.

Let $\Psi\colon L^2(\cB_R)\rightarrow\ell^2(Z)$ be the analysis operator of $(\tilde{\psi}_t)_{t\in Z}$. For $s\in\bR$, let $h^s(Z)$ be the space
\begin{align*}
    h^s(Z) \coloneqq \bigg\{(x_t)_{t\in Z}\in\bC^Z\colon\ 
    \sum_{t\in Z} (1+|t|^2)^s|x_t|^2<+\infty \bigg\}.
\end{align*}
We note that $h^s(Z)$ is a Hilbert space endowed with the inner product
\begin{align*}
    \langle x,y \rangle_{h^s} \coloneqq \sum_{t\in Z} (1+|t|^2)^s x_t \overline{y_t}.
\end{align*}
For $k\in\bN$, the norm $\|\cdot\|_{h^k}$ is equivalent to the one induced by the following inner product --- which, with an abuse of notation, we still denote with $\langle \cdot,\cdot \rangle_{h^k}$:
\begin{align*}
    \langle x,y \rangle_{h^k} \coloneqq \sum_{t\in Z}\sum_{|\alpha|\leq k} (2\pi)^{2|\alpha|} t^{2\alpha} x_t\overline{y_t}.
\end{align*}
Note that the implicit constants in the equivalence of norms depend only on $k$.

For $k\in\bN$ and $u\in L^2(\cB_R)$, we recall the definition of the inner product on $H^k(\cB_R)$, given by
\begin{align*}
    \langle u,v \rangle_{H^k} = \sum_{|\alpha|\leq k} \langle \partial^{\alpha}u,\partial^{\alpha}v \rangle.
\end{align*}
For $u\in H_0^k(\cB_R)$, the norm $\|u\|_{H^k(\cB_R)}$ can be identified with $\|\tilde{u}\|_{H^k(\bR^2)}$, where $\tilde{u}$ is the extension by $0$ of $u$ to $\bR^2$. Again, the implicit constants in the equivalence of norms depend only on $k$.

Finally, for $k\in\bN$ and $u\in L^2(\cB_R)$, we define the norm
\begin{align*}
    \|u\|_{\tilde{H}^{-k}(\cB_R)} \coloneqq \sup\{|\langle u,v\rangle_{L^2}|\colon\ v\in H^k(\cB_R),\ \|v\|_{H^k(\cB_R)}\leq 1\}.
\end{align*}
The proof of Proposition~\ref{prop:illfour-quasidiag} requires several lemmas, to be proved separately.
\begin{lemma}
\label{lem:illfour-quasidiag-sublem1}
Let $k\in\bN$. Then $\Psi\big(H^k(\cB_R)\big)\subset h^k(Z)$ and there exists constants $c_1,C_1>0$, depending only on the frame bounds, such that
\begin{align*}
    c_1\|u\|_{H^k(\cB_R)} \leq \|\Psi{u}\|_{h^k(Z)} \leq C_1\|u\|_{H^k(\cB_R)},\qquad u\in H^k(\cB_R).
\end{align*}
\end{lemma}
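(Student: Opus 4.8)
The plan is to deduce the stated equivalence from the two-sided frame inequality for $(\tilde\psi_t)_{t\in Z}$ on $L^2(\cB_R)$, applied not to $u$ itself but to each of its derivatives of order at most $k$. Write $A,B>0$ for the frame bounds, so that $A\|g\|_{L^2(\cB_R)}^2\le\sum_{t\in Z}|\langle g,\tilde\psi_t\rangle|^2\le B\|g\|_{L^2(\cB_R)}^2$ for every $g\in L^2(\cB_R)$. Since $(\Psi u)_t=\langle u,\tilde\psi_t\rangle=\int_{\cB_R}u(x)\,e^{-2\pi i t\cdot x}\,\diff x$, and since the $h^k(Z)$ norm is equivalent (with constants depending only on $k$) to the one induced by $\langle x,y\rangle_{h^k}=\sum_{t\in Z}\sum_{|\alpha|\le k}(2\pi)^{2|\alpha|}t^{2\alpha}x_t\overline{y_t}$, it suffices to compare, for each multi-index $\alpha$ with $|\alpha|\le k$, the quantity $(2\pi)^{2|\alpha|}\sum_{t\in Z}t^{2\alpha}|(\Psi u)_t|^2$ with $\|\partial^\alpha u\|_{L^2(\cB_R)}^2$, and then to sum over $\alpha$.

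The first step is the identity $(2\pi i t)^\alpha(\Psi u)_t=\langle \partial^\alpha u,\tilde\psi_t\rangle$ (assuming for the moment that the boundary terms in the integration by parts vanish; see the next paragraph). This is just the differentiation rule for the Fourier transform: denoting by $\tilde u$ the extension of $u$ by zero, one has $\widehat{\partial^\alpha\tilde u}(t)=(2\pi i t)^\alpha\widehat{\tilde u}(t)=(2\pi i t)^\alpha(\Psi u)_t$, while integrating by parts $|\alpha|$ times in $\int_{\cB_R}u\,\partial_x^\alpha e^{-2\pi i t\cdot x}\,\diff x$ moves the derivatives onto $u$ and produces $\langle \partial^\alpha u,\tilde\psi_t\rangle$. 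Taking squared moduli and summing over $t\in Z$, the frame inequality applied to $g=\partial^\alpha u\in L^2(\cB_R)$ gives
\[
A\,\|\partial^\alpha u\|_{L^2(\cB_R)}^2\le(2\pi)^{2|\alpha|}\sum_{t\in Z}t^{2\alpha}|(\Psi u)_t|^2\le B\,\|\partial^\alpha u\|_{L^2(\cB_R)}^2 .
\]
Summing over all $\alpha$ with $|\alpha|\le k$ and recalling that $\|u\|_{H^k(\cB_R)}^2=\sum_{|\alpha|\le k}\|\partial^\alpha u\|_{L^2(\cB_R)}^2$, this yields $A\|u\|_{H^k(\cB_R)}^2\le\|\Psi u\|_{h^k(Z)}^2\le B\|u\|_{H^k(\cB_R)}^2$ up to the $k$-dependent constants coming from the norm equivalence on $h^k(Z)$. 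In particular $\Psi u\in h^k(Z)$, and one reads off the constants $c_1,C_1$, which depend only on the frame bounds (and on $k$).

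The main obstacle is precisely the identity $(2\pi i t)^\alpha(\Psi u)_t=\langle\partial^\alpha u,\tilde\psi_t\rangle$: the integration by parts generates boundary contributions on $\partial\cB_R$ involving the traces of $u$ and of its lower-order derivatives, which must be controlled in order to legitimately reduce to the frame coefficients of $\partial^\alpha u$. These boundary terms vanish exactly when the zero-extension $\tilde u$ belongs to $H^k(\bR^2)$, which is automatic for the functions to which the lemma is applied: recall that in Proposition~\ref{prop:illfour-quasidiag} and in Remark~\ref{rem:illfour-nonunif-trunc} the relevant $u$ is supported in $K\Subset\cB_R$, so that the traces of $u$ and of all its derivatives vanish identically. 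To reach the statement for an arbitrary $u\in H^k(\cB_R)$ one must in addition estimate the contribution of the boundary layer to the sampled Fourier transform $(\Psi\,\cdot\,)_t$, using the separation of $Z$ together with a trace estimate on $\partial\cB_R$; this boundary analysis is where the genuine technical work lies, the frame inequality and the norm-equivalence bookkeeping of the previous paragraphs being otherwise routine.
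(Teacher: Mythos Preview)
Your approach is exactly the paper's: apply the frame inequality to each $\partial^\alpha u$, use the identity $\langle\partial^\alpha u,\tilde\psi_t\rangle=(2\pi i t)^\alpha\langle u,\tilde\psi_t\rangle$, and sum over $|\alpha|\le k$. The paper's proof simply \emph{asserts} this identity for all $u\in H^k(\cB_R)$ without any mention of boundary terms, whereas you correctly single them out as the real issue. In that sense your write-up is more honest than the paper's.

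However, your final paragraph points in the wrong direction. You suggest that for general $u\in H^k(\cB_R)$ the boundary contribution can be controlled by trace estimates and the separation of $Z$; it cannot, because the statement is false at that level of generality. Take (in one dimension, for clarity) $R=1$, $Z=\tfrac12\bZ$ (a frame for $L^2(-1,1)$), $k=1$, and $u(x)=x$. Then $u\in C^\infty([-1,1])\subset H^1(-1,1)$, but a direct computation gives $|(\Psi u)_t|=\tfrac{1}{\pi|t|}$ for every $t\in\tfrac12\bZ\setminus\{0\}$, so that $\sum_t(1+|t|^2)|(\Psi u)_t|^2=\infty$ and $\Psi u\notin h^1(Z)$. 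The same phenomenon occurs in two dimensions for, say, $u(x)=x_1$ on $\cB_R$: the jump of the zero-extension across $\partial\cB_R$ forces $|(\Psi u)_t|\sim|t|^{-3/2}$, and the $h^1$ sum diverges. So the inclusion $\Psi(H^k(\cB_R))\subset h^k(Z)$ fails as stated, and the identity $\langle\partial^\alpha u,\tilde\psi_t\rangle=(2\pi i t)^\alpha\langle u,\tilde\psi_t\rangle$ on which both your argument and the paper's rest is simply not available without vanishing traces.

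The correct repair is not a boundary analysis but a restriction of the hypothesis: replace $H^k(\cB_R)$ by $H_0^k(\cB_R)$ (or by functions supported in $K\Subset\cB_R$), in which case the integration by parts is legitimate and your argument, which coincides with the paper's, goes through verbatim. This is also all that is needed downstream, since Lemma~\ref{lem:illfour-quasidiag-sublem4} and Proposition~\ref{prop:illfour-quasidiag} are ultimately applied to $u\in L^2(K)$.
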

\begin{lemma}
\label{lem:illfour-quasidiag-sublem2}
Let $k\in\bN$. Then $\Psi^* \Psi\big(H^k(\cB_R)\big)=H^k(\cB_R)$ and there exists constants $c_2,C_2>0$, depending only on the frame bounds, such that
\begin{align*}
    c_2\|u\|_{H^k(\cB_R)} \leq \|\Psi^* \Psi{u}\|_{H^k(\cB_R)} \leq C_2\|u\|_{H^k(\cB_R)},\qquad u\in H^k(\cB_R).
\end{align*}
Moreover, we have that
\begin{align*}
    C_2^{-1}\|u\|_{\tilde{H}^{-k}(\cB_R)} \leq \|(\Psi^* \Psi)^{-1}{u}\|_{\tilde{H}^{-k}(\cB_R)} \leq c_2^{-1} \|u\|_{\tilde{H}^{-k}(\cB_R)},\qquad u\in L^2(\cB_R).
\end{align*}
\end{lemma}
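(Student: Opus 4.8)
The plan is to read the statement as two claims about the frame operator $S \coloneqq \Psi^*\Psi$: first that $S$ is a bounded isomorphism of $H^k(\cB_R)$ with the stated constants, and then that its $L^2$-inverse is a bounded isomorphism of $\tilde{H}^{-k}(\cB_R)$. Since $(\tilde\psi_t)_{t\in Z}$ is a frame of $L^2(\cB_R)$ with frame bounds $0<A\le B$, the operator $S$ is self-adjoint and positive on $L^2(\cB_R)$ with $AI\le S\le BI$; in particular it is an isomorphism of $L^2(\cB_R)$, with $L^2$-inverse $S^{-1}$. Everything will be bootstrapped from Lemma~\ref{lem:illfour-quasidiag-sublem1}, which identifies $\|\Psi u\|_{h^k(Z)}$ with $\|u\|_{H^k(\cB_R)}$ up to constants depending only on the frame bounds.

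For the upper bound I would first show that the synthesis operator $\Psi^*$ maps $h^k(Z)$ boundedly into $H^k(\cB_R)$. For $c\in h^k(Z)$ one has $\Psi^* c(x)=\sum_{t\in Z} c_t e^{2\pi i t\cdot x}$ on the open ball $\cB_R$, and differentiating termwise gives $\partial^\alpha\Psi^* c=\Psi^*\big((2\pi i t)^\alpha c_t\big)_t$ for $|\alpha|\le k$, the differentiated series converging in $L^2(\cB_R)$ precisely because $c\in h^k(Z)$. Applying the Bessel bound $\|\Psi^*\|_{\ell^2\to L^2}\le\sqrt B$ to each derivative and summing over $|\alpha|\le k$, the equivalent norm on $h^k(Z)$ yields $\|\Psi^* c\|_{H^k(\cB_R)}\le\sqrt B\,\|c\|_{h^k(Z)}$. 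Composing with Lemma~\ref{lem:illfour-quasidiag-sublem1} gives $\|\Psi^*\Psi u\|_{H^k(\cB_R)}\le\sqrt B\,\|\Psi u\|_{h^k(Z)}\le \sqrt B\,C_1\|u\|_{H^k(\cB_R)}$, so $C_2=\sqrt B\,C_1$ works.

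The substantial part is the lower bound together with surjectivity $S\big(H^k(\cB_R)\big)=H^k(\cB_R)$; once these hold, the open mapping theorem upgrades injectivity (inherited from $L^2$) to a bounded inverse on $H^k$, giving $c_2$. Here I would pass to the Gram operator $G\coloneqq\Psi\Psi^*$ on $\ell^2(Z)$, whose matrix is the Toeplitz-type array $G_{t,t'}=\widehat{\mathbbm{1}_{\cB_R}}(t-t')$. Writing $\Psi\big(\Psi^*\Psi u\big)=G\,\Psi u$ and using Lemma~\ref{lem:illfour-quasidiag-sublem1} on $Su\in H^k(\cB_R)$, the lower bound $\|Su\|_{H^k}\gtrsim\|u\|_{H^k}\asymp\|\Psi u\|_{h^k}$ is equivalent to $G$ being bounded below on the closed subspace $\Psi\big(H^k(\cB_R)\big)\subset h^k(Z)$, and surjectivity to $G$ mapping this subspace (which it preserves, since $G\Psi u=\Psi(Su)$) onto itself. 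I would prove boundedness of $G$ on $h^k(Z)$ by a weighted Schur test, exploiting the polynomial decay of $\widehat{\mathbbm{1}_{\cB_R}}$ together with the $\eta$-separation of $Z$ quantified in Lemma~\ref{lem:illfour-zestimate}; the invertibility of $G$ on the weighted space would then follow from an inverse-closedness (Wiener/Sjöstrand-type) argument showing that the canonical dual frame coefficients inherit the same weighted bounds, equivalently that $G^{-1}$ retains the off-diagonal decay of $G$. This weighted invertibility is the main obstacle: the $L^2$ frame bounds control $G$ only on the unweighted $\ell^2(Z)$, and transferring invertibility to $h^k(Z)$ genuinely requires the decay structure rather than the abstract frame inequalities alone.

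Finally, the second pair of inequalities follows from the first by duality, with no further analytic input. Since $\tilde{H}^{-k}(\cB_R)$ is by definition the dual of $H^k(\cB_R)$ realized through the $L^2$ pairing, and $S^{-1}$ is self-adjoint on $L^2(\cB_R)$ with $L^2$ dense in $\tilde{H}^{-k}(\cB_R)$, the transpose of $S^{-1}\colon H^k(\cB_R)\to H^k(\cB_R)$ is exactly the action of $S^{-1}$ on $\tilde{H}^{-k}(\cB_R)$. Transposing the two-sided bound $C_2^{-1}\|v\|_{H^k}\le\|S^{-1}v\|_{H^k}\le c_2^{-1}\|v\|_{H^k}$ furnished by the first part then yields $C_2^{-1}\|u\|_{\tilde{H}^{-k}}\le\|(\Psi^*\Psi)^{-1}u\|_{\tilde{H}^{-k}}\le c_2^{-1}\|u\|_{\tilde{H}^{-k}}$ for all $u\in L^2(\cB_R)$, as claimed.
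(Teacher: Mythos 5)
Your upper bound and your closing duality step are sound (and essentially parallel the paper's), but the part you yourself flag as ``the main obstacle'' --- the lower bound together with the surjectivity $\Psi^*\Psi\big(H^k(\cB_R)\big)=H^k(\cB_R)$ --- is left as a plan, and the plan as described would not go through. Your route via the Gram operator $G=\Psi\Psi^*$, with entries $G_{t,t'}=\widehat{\mathbbm{1}_{\cB_R}}(t-t')$, needs absolute off-diagonal decay that is summable over the $\eta$-separated set $Z$; in dimension $2$ this requires decay of order strictly greater than $2$. But $\widehat{\mathbbm{1}_{\cB_R}}(\xi)=R\,J_1(2\pi R|\xi|)/|\xi|$ decays only like $|\xi|^{-3/2}$, so by the counting estimate of Lemma~\ref{lem:illfour-zestimate} one gets $\sum_{t'\neq t}|G_{t,t'}|\gtrsim\sum_{l}2^{2l}\,2^{-3l/2}=\infty$: the weighted Schur test fails at the first step, and $G$ does not lie in the Jaffard/Sj\"ostrand-type algebras whose inverse-closedness you want to invoke. (The $\ell^2$-boundedness and invertibility of $G$ on the range of $\Psi$ hold only through cancellation, via the frame property, not through absolute decay.) So the localized-frames machinery is not merely heavy here; as proposed, it is inapplicable.

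The point you missed is that no such machinery is needed. Set $\Psi_k\coloneqq\Psi|_{H^k(\cB_R)}\colon H^k(\cB_R)\to h^k(Z)$ and compute its adjoint with respect to the $H^k$ and $h^k$ inner products (the ones built from $\partial^{\alpha}$ on one side and multiplication by $(2\pi i t)^{\alpha}$ on the other). Because the exponentials $\tilde{\psi}_t$ diagonalize differentiation, a short computation gives $\Psi_k^* y=\sum_{t\in Z}y_t\tilde{\psi}_t$, i.e.\ $\Psi_k^*=\Psi^*|_{h^k(Z)}$, and hence $\Psi^*\Psi u=\Psi_k^*\Psi_k u$ for $u\in H^k(\cB_R)$. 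Thus $\Psi^*\Psi|_{H^k}$ has the form $T^*T$, so it is self-adjoint and positive for the $H^k$ inner product, and its quadratic form is $\langle \Psi^*\Psi u,u\rangle_{H^k}=\|\Psi_k u\|_{h^k}^2$, which by Lemma~\ref{lem:illfour-quasidiag-sublem1} is equivalent to $\|u\|_{H^k}^2$. The standard frame-operator (spectral) argument then delivers simultaneously the two-sided bound, with $c_2=c_1^2$ and $C_2=C_1^2$, and the automorphism property $\Psi^*\Psi(H^k)=H^k$ --- exactly the two facts your Gram-matrix detour was meant to supply, and with constants depending only on the frame bounds as claimed. Your final duality paragraph, which is correct conditionally on the first part, then closes the second claim just as in the paper.
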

\begin{lemma}
\label{lem:illfour-quasidiag-sublem3}
Let $k\in\bN$. Then there exist constants $c_3,C_3>0$, depending only on $k$ and on the frame bounds, such that
\begin{align*}
    \|\Psi{u}\|_{h^{-k}(Z)} \leq C_3\|u\|_{\tilde{H}^{-k}(\cB_R)},\qquad u\in L^2(\cB_R)
\end{align*}
and
\begin{align*}
    c_3\|(\Psi^*\Psi)^{-1}\Psi^* x\|_{\tilde{H}^{-k}(\cB_R)} \leq \|x\|_{h^{-k}(Z)},\qquad x\in\ell^2(Z).
\end{align*}
\end{lemma}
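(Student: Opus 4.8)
The plan is to deduce both inequalities from the two natural dualities together with Lemmas~\ref{lem:illfour-quasidiag-sublem1} and~\ref{lem:illfour-quasidiag-sublem2}. The guiding observation is that the $h^{-k}(Z)$ norm is dual to the $h^k(Z)$ norm through the $\ell^2$ pairing $\langle x,y\rangle_{\ell^2}=\sum_{t\in Z}x_t\overline{y_t}$, in the sense that $\|x\|_{h^{-k}(Z)}=\sup\{|\langle x,y\rangle_{\ell^2}|:\ \|y\|_{h^k(Z)}\leq 1\}$ for $x\in\ell^2(Z)$, while $\|\cdot\|_{\tilde{H}^{-k}(\cB_R)}$ is, by definition, the dual of $\|\cdot\|_{H^k(\cB_R)}$ through the $L^2$ pairing. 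The bridge between the two sides is the adjoint identity $\langle\Psi u,y\rangle_{\ell^2}=\langle u,\Psi^* y\rangle_{L^2}$, which holds since $(\Psi u)_t=\langle u,\tilde{\psi}_t\rangle$ and $\Psi^* y=\sum_t y_t\tilde{\psi}_t$.

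First I would establish the auxiliary boundedness $\|\Psi^* y\|_{H^k(\cB_R)}\leq\sqrt{B}\,\|y\|_{h^k(Z)}$ for $y\in h^k(Z)$, where $B$ is the upper frame bound; this is the dual counterpart of Lemma~\ref{lem:illfour-quasidiag-sublem1} and is the only genuinely new computation. Since $\Psi^* y=\sum_{t\in Z}y_t e^{2\pi i t\cdot x}$ on the interior of $\cB_R$, termwise differentiation gives $\partial^\alpha\Psi^* y=\Psi^*\bigl((2\pi i t)^\alpha y_t\bigr)_{t}$ in $L^2(\cB_R)$ for every multi-index $\alpha$ with $|\alpha|\leq k$ (the coefficient sequence lies in $\ell^2(Z)$ because $y\in h^k(Z)$). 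As the synthesis operator $\Psi^*$ of the frame is bounded by $\sqrt{B}$, one gets
\[
\|\partial^\alpha\Psi^* y\|_{L^2(\cB_R)}^2\leq B\sum_{t\in Z}(2\pi)^{2|\alpha|}t^{2\alpha}|y_t|^2.
\]
Summing over $|\alpha|\leq k$ and recognizing the right-hand side as the equivalent $h^k(Z)$ inner product introduced above yields the claim, up to the $k$-dependent norm-equivalence constant.

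For the first inequality I would then write, using the duality for $h^{-k}$ and the adjoint identity,
\[
\|\Psi u\|_{h^{-k}(Z)}=\sup_{\|y\|_{h^k}\leq 1}|\langle u,\Psi^* y\rangle_{L^2}|\leq\|u\|_{\tilde{H}^{-k}(\cB_R)}\sup_{\|y\|_{h^k}\leq 1}\|\Psi^* y\|_{H^k(\cB_R)},
\]
and the auxiliary bound controls the last supremum, giving the first inequality. For the second inequality I would first bound $\|\Psi^* x\|_{\tilde{H}^{-k}(\cB_R)}$: for $v\in H^k(\cB_R)$ with $\|v\|_{H^k}\leq 1$, the adjoint identity and the $h^{-k}/h^k$ Cauchy--Schwarz pairing give $|\langle\Psi^* x,v\rangle_{L^2}|=|\langle x,\Psi v\rangle_{\ell^2}|\leq\|x\|_{h^{-k}(Z)}\|\Psi v\|_{h^k(Z)}\leq C_1\|x\|_{h^{-k}(Z)}$ by Lemma~\ref{lem:illfour-quasidiag-sublem1}, so $\|\Psi^* x\|_{\tilde{H}^{-k}(\cB_R)}\leq C_1\|x\|_{h^{-k}(Z)}$. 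Applying the second statement of Lemma~\ref{lem:illfour-quasidiag-sublem2} to $u=\Psi^* x\in L^2(\cB_R)$ then absorbs the factor $(\Psi^*\Psi)^{-1}$, yielding $\|(\Psi^*\Psi)^{-1}\Psi^* x\|_{\tilde{H}^{-k}(\cB_R)}\leq c_2^{-1}C_1\|x\|_{h^{-k}(Z)}$, i.e. the claim with $c_3=c_2/C_1$.

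The main obstacle is the auxiliary adjoint bound $\Psi^*\colon h^k(Z)\to H^k(\cB_R)$: one must justify differentiating the synthesis series $\sum_t y_t e^{2\pi i t\cdot x}$ termwise inside $\cB_R$ (it converges only in $L^2$ a priori) and identify the resulting weight $\sum_{|\alpha|\leq k}(2\pi)^{2|\alpha|}t^{2\alpha}$ with the $h^k$ norm via the equivalence recorded in the notation. Once this is in place, everything else is bookkeeping of the two dualities together with direct appeals to Lemmas~\ref{lem:illfour-quasidiag-sublem1} and~\ref{lem:illfour-quasidiag-sublem2}.
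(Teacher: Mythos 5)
Your proposal is correct and takes essentially the same route as the paper: both proofs run on the $\ell^2$/$L^2$ duality, the adjoint identity $\langle\Psi u,y\rangle_{\ell^2}=\langle u,\Psi^*y\rangle_{L^2}$, Lemma~\ref{lem:illfour-quasidiag-sublem1} for the bound $\|\Psi^* x\|_{\tilde{H}^{-k}(\cB_R)}\leq C_1\|x\|_{h^{-k}(Z)}$, and the second part of Lemma~\ref{lem:illfour-quasidiag-sublem2} to absorb $(\Psi^*\Psi)^{-1}$, with identical constants $c_3=c_2/C_1$. The only minor divergence is in the auxiliary bound $\|\Psi^*y\|_{H^k(\cB_R)}\lesssim\|y\|_{h^k(Z)}$: the paper obtains it from the identification $\Psi_k^*=\Psi^*|_{h^k(Z)}$ established in the proof of Lemma~\ref{lem:illfour-quasidiag-sublem2} together with equality of adjoint operator norms, whereas you re-derive it by termwise differentiation of the synthesis series and the frame synthesis bound --- a step that is indeed legitimate, since the differentiated coefficient sequences lie in $\ell^2(Z)$, so the differentiated partial sums converge in $L^2(\cB_R)$ and agree with the distributional derivatives in the limit.
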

\begin{lemma}
\label{lem:illfour-quasidiag-sublem4}
Let $k\in\bN$. Then there exist constants $c_4,C_4>0$, depending only on $k$, on the frame bounds, on $K$ and on $R$, such that, for all $b\in[0,k]$,
\begin{align*}
    c_4\|u\|_{H^{-b}(\bR^2)}\leq\|\Psi{u}\|_{h^{-b}(Z)}\leq C_4\|u\|_{H^{-b}(\bR^2)},\qquad u\in L^2(K).
\end{align*}
\end{lemma}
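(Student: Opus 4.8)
The plan is to reduce the statement to a comparison between a discrete and a continuous weighted $L^2$-norm of a Fourier transform, and then to obtain the full range $b\in[0,k]$ from the two integer endpoints $b=0$ and $b=k$ by complex interpolation. First I would note that for $u\in L^2(K)$, extended by zero to $\bR^2$, the analysis coefficients are just Fourier samples, $(\Psi u)_t=\langle u,\tilde{\psi}_t\rangle=\widehat{u}(t)$, so that
\[
\|\Psi u\|_{h^{-b}(Z)}^2=\sum_{t\in Z}(1+|t|^2)^{-b}|\widehat{u}(t)|^2,\qquad \|u\|_{H^{-b}(\bR^2)}^2=\int_{\bR^2}(1+|\xi|^2)^{-b}|\widehat{u}(\xi)|^2\,\diff\xi .
\]
The claim is thus a weighted sampling equivalence for the band-limited function $\widehat{u}$, with weights that vary slowly on the scale of the $\eta$-separated, $\delta$-dense set $Z$.

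For the endpoint $b=0$ the statement is exactly the frame inequality for $(\tilde{\psi}_t)_{t\in Z}$ on $L^2(\cB_R)$ recalled in Section~\ref{sec:illfour-setting}, since $h^0(Z)=\ell^2(Z)$ and $\|u\|_{H^0(\bR^2)}=\|u\|_{L^2(\cB_R)}$ for $u$ supported in $K$. For the endpoint $b=k$ I would combine the two inequalities of Lemma~\ref{lem:illfour-quasidiag-sublem3}: its first bound gives $\|\Psi u\|_{h^{-k}(Z)}\le C_3\|u\|_{\tilde{H}^{-k}(\cB_R)}$, while applying its second bound to $x=\Psi u$ and using $(\Psi^*\Psi)^{-1}\Psi^*\Psi u=u$ (Lemma~\ref{lem:illfour-quasidiag-sublem2}) yields $c_3\|u\|_{\tilde{H}^{-k}(\cB_R)}\le\|\Psi u\|_{h^{-k}(Z)}$, hence $\|\Psi u\|_{h^{-k}(Z)}\asymp\|u\|_{\tilde{H}^{-k}(\cB_R)}$. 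It then remains to compare $\tilde{H}^{-k}(\cB_R)=(H^k(\cB_R))'$ with $H^{-k}(\bR^2)$ for $u$ supported in $K$: the inequality $\|u\|_{H^{-k}(\bR^2)}\le C\|u\|_{\tilde{H}^{-k}(\cB_R)}$ follows by testing against $v\in H^k(\bR^2)$ and restricting to $\cB_R$, while the reverse follows by fixing a cutoff $\chi\in C_c^\infty(\cB_R)$ with $\chi\equiv1$ on $K$ and writing $\langle u,v\rangle=\langle u,\chi v\rangle$, since $\chi v$ extends by zero to an $H^k(\bR^2)$ function with $\|\chi v\|_{H^k(\bR^2)}\lesssim\|v\|_{H^k(\cB_R)}$. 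This settles the case $b=k$.

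To pass to general $b\in[0,k]$ I would interpolate the two inequalities separately. For the upper bound, consider $\tilde{\Psi}u:=(\widehat{\chi u}(t))_{t\in Z}$, which agrees with $\Psi u$ when $u$ is supported in $K$: it is bounded $L^2(\bR^2)\to h^0(Z)$ by the frame upper bound, and bounded $H^{-k}(\bR^2)\to h^{-k}(Z)$ by the first inequality of Lemma~\ref{lem:illfour-quasidiag-sublem3} together with the boundedness of $u\mapsto\chi u$ from $H^{-k}(\bR^2)$ to $\tilde{H}^{-k}(\cB_R)$. Complex interpolation, with $[h^0,h^{-k}]_\theta=h^{-\theta k}$ and $[L^2,H^{-k}]_\theta=H^{-\theta k}(\bR^2)$, gives $\|\Psi u\|_{h^{-b}(Z)}\le C_4\|u\|_{H^{-b}(\bR^2)}$ for $b=\theta k$. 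For the lower bound, interpolate instead the canonical dual synthesis $R:=(\Psi^*\Psi)^{-1}\Psi^*$, which satisfies $R\Psi=\mathrm{id}$ on $L^2(\cB_R)$: it is bounded $h^0(Z)=\ell^2(Z)\to L^2(\cB_R)$ by the frame lower bound and $h^{-k}(Z)\to\tilde{H}^{-k}(\cB_R)$ by the second inequality of Lemma~\ref{lem:illfour-quasidiag-sublem3}. Identifying $[\,L^2(\cB_R),\tilde{H}^{-k}(\cB_R)\,]_\theta$ with $(H^b(\cB_R))'$ by the duality theorem for complex interpolation, and using $u=R\Psi u$ together with the comparison $\|u\|_{H^{-b}(\bR^2)}\le C\|u\|_{(H^b(\cB_R))'}$ from the previous paragraph (now for real $b$), yields $c_4\|u\|_{H^{-b}(\bR^2)}\le\|\Psi u\|_{h^{-b}(Z)}$.

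The main obstacle I expect is the careful bookkeeping of the negative-order spaces under interpolation: extending $\Psi$ and $R$ from $L^2(\cB_R)$ to $\tilde{H}^{-k}(\cB_R)$ by density, identifying $[\,L^2(\cB_R),\tilde{H}^{-k}(\cB_R)\,]_\theta$ with $(H^b(\cB_R))'$ via interpolation--duality (which requires density of $L^2(\cB_R)$ in the dual and reflexivity), and checking that the comparison between $(H^b(\cB_R))'$ and $H^{-b}(\bR^2)$ for $K$-supported functions holds with constants uniform over $b\in[0,k]$. All constants produced this way depend only on $k$, the frame bounds, $K$ and $R$, as claimed, because complex interpolation controls the intermediate operator norms through $\|T\|_{\theta}\le\|T\|_0^{1-\theta}\|T\|_1^{\theta}$.
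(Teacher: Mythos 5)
Your upper-bound argument coincides with the paper's: the operator you call $\tilde\Psi u=(\widehat{\chi u}(t))_{t\in Z}$ is exactly the map $u\mapsto\Psi(\eta u)$ that the paper interpolates between $L^2(\bR^2)\to\ell^2(Z)$ and $H^{-k}(\bR^2)\to h^{-k}(Z)$, two standard couples on $\bR^2$ and on $Z$. The genuine divergence is in the lower bound. You interpolate $R=(\Psi^*\Psi)^{-1}\Psi^*$ with \emph{target} spaces $L^2(\cB_R)$ and $\tilde{H}^{-k}(\cB_R)$, which forces you to (i) identify $[L^2(\cB_R),\tilde{H}^{-k}(\cB_R)]_\theta$ with $(H^{\theta k}(\cB_R))'$ via the duality theorem for complex interpolation, (ii) know that $[L^2(\cB_R),H^k(\cB_R)]_\theta=H^{\theta k}(\cB_R)$, which requires an extension operator for the ball, (iii) handle the fact that $\tilde{H}^{-k}(\cB_R)$ as defined in the paper is only a norm on $L^2(\cB_R)$ and must be completed before interpolation theory applies, and (iv) prove the comparison between $(H^b(\cB_R))'$ and $H^{-b}(\bR^2)$ for $K$-supported functions with constants uniform in $b\in[0,k]$. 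These are exactly the obstacles you flag at the end; they are real, and discharging them is most of the remaining work. The paper sidesteps all of them with a single move: it interpolates the operator $x\mapsto\eta(\Psi^*\Psi)^{-1}\Psi^*x$, i.e.\ it puts the cutoff \emph{inside} the operator, so that both endpoint targets become $L^2(\bR^2)$ and $H^{-k}(\bR^2)$. The only interpolation identities then needed are $[\ell^2(Z),h^{-k}(Z)]_\theta=h^{-\theta k}(Z)$ and $[L^2(\bR^2),H^{-k}(\bR^2)]_\theta=H^{-\theta k}(\bR^2)$, both immediate weighted-$L^2$/Fourier-multiplier facts; no duality theorem and no Sobolev interpolation on a bounded domain enter anywhere. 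The conclusion is then reached as in your argument, setting $x=\Psi u$ and using $(\Psi^*\Psi)^{-1}\Psi^*\Psi u=u$ together with $\eta u=u$ for $u\in L^2(K)$. If you keep your route, items (i)--(iv) must actually be proved, not just listed; otherwise the paper's rearrangement makes your ``bookkeeping'' paragraph unnecessary. A minor remark: your separate treatment of the endpoint $b=k$ becomes redundant once the interpolation argument is in place, since the interpolated bounds already cover all $b\in[0,k]$, endpoints included.
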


% \begin{proof}[Proof of Proposition~\ref{prop:illfour-quasidiag}]
% Notice that, with reference to the statement of Proposition~\ref{prop:illfour-quasidiag}, we have $$\|\Psi{u}\|_{h^{-b}(Z)}=\|U{u}\|_{\ell^2(Z)}.$$ Therefore, Lemma~\ref{lem:illfour-quasidiag-sublem4} concludes the proof.
% \end{proof}

\begin{proof}[Proof of Proposition~\ref{prop:illfour-quasidiag}] 
We establish the quasi-diagonalization property by showing an equivalence between norms. 
First, observe that for any $u \in \mathcal{H}_1$,
\begin{align*}
    \|\Psi u\|_{h^{-b}(Z)} &= \left(\sum_{t \in Z} (1+|t|^2)^{-b}|\langle u,\tilde \psi_t\rangle|^2\right)^{1/2} \\
    &= \left(\sum_{t \in Z}|\langle u,(1+|t|^2)^{-b/2} \tilde \psi_t\rangle|^2\right)^{1/2} \\
    &= \|U u\|_{\ell^2(Z)}.
\end{align*}
The result then follows directly from Lemma~\ref{lem:illfour-quasidiag-sublem4}.
\end{proof}

\begin{proof}[Proof of Lemma~\ref{lem:illfour-quasidiag-sublem1}]
For $u\in H^k(\cB_R)$ and $|\alpha|\leq k$, we have that
\begin{align*}
    \|\Psi(\partial^{\alpha}u)\|_{\ell^2(Z)}^2 = \sum_{t\in Z}|\langle \partial^{\alpha}u,\tilde{\psi}_t\rangle_{L^2}|^2 =
    \sum_{t\in Z}(2\pi)^{2|\alpha|}t^{2\alpha}|\langle u,\tilde{\psi}_t\rangle_{L^2}|^2.
\end{align*}
We conclude that
\begin{align*}
    \sum_{|\alpha|\leq k} \|\Psi(\partial^{\alpha}u)\|_{\ell^2(Z)}^2 = \|\Psi{u}\|_{h^k(Z)}^2.
\end{align*}
We now use the frame property of $u$ to conclude that
\begin{align*}
    \|u\|_{H^k(\cB_R)}^2 = \sum_{|\alpha|\leq k} \|\partial^{\alpha}u\|_{L^2(\cB_R)}^2 \asymp \sum_{|\alpha|\leq k} \|\Psi(\partial^{\alpha}u)\|_{\ell^2(Z)}^2 = \|\Psi{u}\|_{h^k(Z)}^2.
\end{align*}
where the implicit constants depend only on the frame bounds.
\end{proof}

\begin{proof}[Proof of Lemma~\ref{lem:illfour-quasidiag-sublem2}]
Let $\Psi_k\colon H^k(\cB_R)\rightarrow h^k(Z)$ be defined by $\Psi_k=\Psi|_{H^k(\cB_R)}$. We now compute $\Psi_k^*$. Given $u\in H^k(\cB_R)$ and $y\in h^k(Z)$, we have that
\begin{align*}
    \langle \Psi_k u,y \rangle_{h^k} &=
    \sum_{t\in Z} \sum_{|\alpha|\leq k} (2\pi)^{2|\alpha|} t^{2\alpha} \langle u,\tilde{\psi}_t \rangle_{L^2} \overline{y_t} \\ &=
    \sum_{t\in Z} \sum_{|\alpha|\leq k} (2\pi i)^{|\alpha|}t^{\alpha} \langle \partial^{\alpha}u,\tilde{\psi}_t \rangle_{L^2} \overline{y_t} \\ &=
    \sum_{|\alpha|\leq k}
    \bigg\langle \partial^{\alpha}u,\sum_{t\in Z}(-2\pi i)^{|\alpha|}t^{\alpha}y_t\tilde{\psi}_t \bigg\rangle_{L^2} \\ &=
    \sum_{|\alpha|\leq k} \bigg\langle \partial^{\alpha}u,\partial^{\alpha}\sum_{t\in Z}y_t \tilde{\psi}_t \bigg\rangle_{L^2} \\ &=
    \langle u,\sum_{t\in Z}y_t\tilde{\psi}_t \rangle_{H^k}.
\end{align*}
We conclude that
\begin{align*}
    \Psi_k^* y = \sum_{t\in Z} y_t \tilde{\psi}_t.
\end{align*}
In other words, $\Psi_k^*=\Psi^*|_{h^k(Z)}$, which implies that
\begin{equation}
\label{eq:illfour-quasidiag-sublem2-identity}
    \Psi_k^* \Psi_k u = \Psi^* \Psi u,\qquad u\in H^k(\cB_R).
\end{equation}
By Lemma~\ref{lem:illfour-quasidiag-sublem1}, $\Psi_k$ is an isomorphism on its image; this implies that $\Psi_k^*\Psi_k$ is an automorphism of $H^k(\cB_R)$; this proves the first claim.

We now prove the second claim. Let $z\in L^2(\cB_R)$ and let $v\in H^k(\cB_R)$. Then we have that 
\begin{align*}
    |\langle \Psi^*\Psi z,v \rangle_{L^2}| &\leq 
    |\langle z,\Psi^*\Psi v \rangle_{L^2}| \\ &\leq
    \|z\|_{\tilde{H}^{-k}(\cB_R)} \|\Psi^*\Psi v\|_{H^k(\cB_R)} \\ &\leq
    C_2 \|z\|_{\tilde{H}^{-k}(\cB_R)} \|v\|_{H^k(\cB_R)},
\end{align*}
which implies that
\begin{align*}
    \|\Psi^* \Psi z\|_{\tilde{H}^{-k}(\cB_R)} \leq C_2 \|z\|_{\tilde{H}^{-k}(\cB_R)}.
\end{align*}
Let $w\in H^k(\cB_R)$ be such that $v=\Psi^* \Psi w$. Then
\begin{align*}
    |\langle z,v \rangle_{L^2}| &=
    |\langle z,\Psi^*\Psi w \rangle_{L^2}| \\ &=
    |\langle \Psi^*\Psi z,w \rangle_{L^2}| \\ &\leq
    \|\Psi^*\Psi z\|_{\tilde{H}^{-k}(\cB_R)} \|w\|_{H^k(\cB_R)} \\ &\leq
    c_2^{-1} \|\Psi^*\Psi z\|_{\tilde{H}^{-k}(\cB_R)} \|v\|_{H^k(\cB_R)},
\end{align*}
which implies that
\begin{align*}
   \|z\|_{\tilde{H}^{-k}(\cB_R)} \leq c_2^{-1}\|\Psi^*\Psi z\|_{\tilde{H}^{-k}(\cB_R)}.
\end{align*}
Setting $z=\Psi^*\Psi u$ concludes the proof of the second claim.
\end{proof}

\begin{proof}[Proof of Lemma~\ref{lem:illfour-quasidiag-sublem3}]
We prove the first claim. Let $u\in L^2(\cB_R)$ and $y\in h^k(Z)$. Then we have that
\begin{align*}
    |\langle \Psi{u},y \rangle_{\ell^2}| = |\langle u,\Psi^*{y} \rangle_{L^2}|.
\end{align*}
We now recall the identity $\Psi_k^*=\Psi^*|_{h^k(Z)}$ from the proof of Lemma~\ref{lem:illfour-quasidiag-sublem2}. By Lemma~\ref{lem:illfour-quasidiag-sublem1} we obtain
\begin{align*}
    |\langle \Psi{u},y \rangle_{\ell^2}| &=
    |\langle u,\Psi_k^* y \rangle_{L^2}| \\ &\leq
    \|u\|_{\tilde{H}^{-k}(\cB_R)} \|\Psi_k^*\|_{h^k\to H^k} \|y\|_{h^k(Z)} \\ &=
    \|u\|_{\tilde{H}^{-k}(\cB_R)} \|\Psi_k\|_{H^k\to h^k} \|y\|_{h^k(Z)} \leq
    C_1 \|u\|_{\tilde{H}^{-k}(\cB_R)} \|y\|_{h^k(Z)}.
\end{align*}
This implies that
\begin{align*}
    \|\Psi{u}\|_{h^{-k}(Z)} \leq C_3 \|u\|_{\tilde{H}^{-k}(\cB_R)}
\end{align*}
for $C_3=C_1$.

We now prove the second claim. Let $x\in\ell^2(Z)$ and let $v\in H^k(\cB_R)$. By Lemma~\ref{lem:illfour-quasidiag-sublem2}, we have that
\begin{align*}
    \|(\Psi^*\Psi)^{-1}\Psi^* x\|_{\tilde{H}^{-k}(\cB_R)} \leq
    c_2^{-1} \|\Psi^* x\|_{\tilde{H}^{-k}(\cB_R)}.
\end{align*}
We now use Lemma~\ref{lem:illfour-quasidiag-sublem1} to conclude that
\begin{align*}
    |\langle \Psi^* x,v \rangle_{L^2}| &=
    |\langle x,\Psi{v} \rangle_{\ell^2}| \\ &\leq
    \|x\|_{h^{-k}(Z)} \|\Psi{v}\|_{h^k(Z)} \leq
    C_1' \|x\|_{h^{-k}(Z)} \|v\|_{H^k(\cB_R)}.
\end{align*}
This implies that
\begin{align*}
    c_3 \|(\Psi^*\Psi)^{-1}\Psi^* x\|_{\tilde{H}^{-k}(\cB_R)} \leq \|x\|_{h^{-k}(Z)}
\end{align*}
for $c_3\coloneqq c_2/C_1'$.
\end{proof}

\begin{proof}[Proof of Lemma~\ref{lem:illfour-quasidiag-sublem4}]
We consider a smooth cutoff function $\eta\in\cD_{\cB_R}(\bR^2)$ with $\eta\in[0,1]$ and $\eta|_{K}\equiv 1$. Notice that, for $v\in H^k(\bR^2)$, we have that $\eta v\in H^k(\cB_R)$ and
\begin{align*}
    \|\eta v\|_{H^k(\cB_R)} \leq C'\|v\|_{H^k(\bR^2)},
\end{align*}
where $C'$  depends only on $k$ and $\eta$. We conclude that, for $w\in L^2(\cB_R)$,
\begin{align}
\label{eq:cut_boh}
    \|\eta w\|_{H^{-k}(\bR^2)} \leq C' \|w\|_{\tilde{H}^{-k}(\cB_R)}.
\end{align}
Analogously, if $v\in H^k(\cB_R)$, we have that $\eta v\in H^k(\bR^2)$ and
\begin{align*}
    \|\eta v\|_{H^k(\bR^2)} \leq C''\|v\|_{H^k(\cB_R)},
\end{align*}
where $C''$ only depends on $k$ and on $\eta$. This implies that, for $w\in L^2(\bR^2)$,
\begin{align}\label{eq:cut_boh2}
    \|\eta w\|_{\tilde{H}^{-k}(\cB_R)} \leq C''\|w\|_{H^{-k}(\bR^2)}.
\end{align}

We now prove the first inequality. Let $0<c\leq C$ denote the frame bounds of $\{\tilde\psi_t\}_{t\in Z}$. By $ \|(\Psi^*\Psi)^{-1}\|_{L^2(\cB_R)\to L^2(\cB_R)} \leq c^{-1}$ and $\|\Psi^*\|_{\ell^2(Z)\to L^2(\cB_R)}\leq \sqrt{C}$, we know that
\begin{align*}
    \|\eta(\Psi^*\Psi)^{-1}\Psi^* x\|_{L^2(\bR^2)} \leq \|(\Psi^*\Psi)^{-1}\Psi^* x\|_{L^2(\cB_R)} \leq \frac{\sqrt{C}}{c} \|x\|_{\ell^2(Z)},\qquad x\in \ell^2(Z).
\end{align*}
Moreover, the following holds:
\begin{align*}
    \frac{c_3}{C'}\|\eta(\Psi^*\Psi)^{-1}\Psi^* x\|_{H^{-k}(\bR^2)} \leq \|x\|_{h^{-k}(Z)},\qquad x\in \ell^2(Z).
\end{align*}
Indeed, by \eqref{eq:cut_boh} and Lemma~\ref{lem:illfour-quasidiag-sublem3}, we have that
\begin{align*}
    \|\eta(\Psi^*\Psi)^{-1}\Psi^* x\|_{H^{-k}(\bR^2)} \leq
    C' \|(\Psi^*\Psi)^{-1}\Psi^* x\|_{H^{-k}(\bR^2)} \leq
    \frac{C'}{c_3} \leq \|x\|_{h^{-k}(Z)}.
\end{align*}
Applying the complex interpolation method \cite[Theorem 4.1.2, Theorem 6.4.5]{bergh2012interpolation} to the compatible couples $\big( h^{-k}(Z),\ell^2(Z) \big)$ and $\big( H^{-k}(\bR^2),L^2(\bR^2) \big)$, we conclude that
\begin{align*}
    c_4 \|\eta(\Psi^*\Psi)^{-1}\Psi^* x\|_{H^{-b}(\bR^2)} \leq \|x\|_{h^{-b}(Z)},\qquad x\in \ell^2(Z),
\end{align*}
where $c_4$ can be chosen as $c_4\coloneqq \min(c/\sqrt{C},c_3/C')$, for instance. If $u\in L^2(K)$, we can set $x=\Psi{u}$ and get
\begin{align*}
    c_4\|u\|_{H^{-b}(\bR^2)} = c_4\|\eta u\|_{H^{-b}(\bR^2)} \leq \|\Psi{u}\|_{h^{-b}(Z)}.
\end{align*}

We now prove the second inequality. We have that
\begin{align*}
    \|\Psi(\eta u)\|_{\ell^2(Z)} \leq \sqrt{C}\|\eta u\|_{L^2(\cB_R)} \leq \sqrt{C}\|u\|_{L^2(\bR^2)},\qquad u\in L^2(\bR^2).
\end{align*}
Moreover, by \eqref{eq:cut_boh2} and Lemma~\ref{lem:illfour-quasidiag-sublem3}, we have that
\begin{align*}
    \|\Psi(\eta u)\|_{h^{-k}(Z)} \leq C_3\|\eta u\|_{\tilde{H}^{-k}(\cB_R)}\leq C'' C_3\|u\|_{H^{-k}(\bR^2)},\qquad u\in L^2(\bR^2).
\end{align*}
Applying the complex interpolation method \cite[Theorem 4.1.2, Theorem 6.4.5]{bergh2012interpolation} to the compatible couples $\big( H^{-k}(\bR^2),L^2(\bR^2) \big)$ and $\big( h^{-k}(Z),\ell^2(Z) \big)$, we conclude that
\begin{align*}
    \|\Psi(\eta u)\|_{h^{-b}(Z)} \leq C_4\|u\|_{H^{-b}(\bR^2)},\qquad u\in L^2(\bR^2).
\end{align*}
where $C_4$ can be chosen as $C_4\coloneqq\max(\sqrt{C},C''C_3)$, for instance. We conclude the proof by observing that $\eta u = u$ if $u\in L^2(K)$.
\end{proof}

\begin{proof}[Proof of Lemma~\ref{lem:illfour-balancing}]
Let $u\in L^2(K)$. Then we have that:
\begin{align*}
    \|P_N^{\perp} U{u}\|_{\ell^2}^2 &= \sum_{ \substack{ t\in Z \\ |t|>N } } (1+|t|^2)^{-b} |\widehat{u}(t)|^2 \\ &\leq
    (1+N^2)^{-b} \sum_{t\in Z} |\widehat{u}(t)|^2 \\ &\leq
    C(1+N^2)^{-b} \|u\|_{L^2}^2
    \\ &\leq C N^{-2b} \|u\|_{L^2}^2,
\end{align*}
where the constant $C\geq 1$ is the upper frame bound of $(\tilde{\psi}_t)_{t\in Z}$. The balancing property then holds if
\begin{equation*} 
    N \geq \theta^{-1/b} C^{1/2b} 2^{j_0}. \qedhere
\end{equation*}
\end{proof}

\begin{proof}[Proof of Lemma~\ref{lem:illfour-coherence}]
In this proof we consider the dictionary of 2D wavelets $(\psi_{n})_n\cup(\phi_{j,n,\varepsilon})_{j,n,\varepsilon}$ --- see Appendix~\ref{appendix:wavelets}. The element $(\phi_{j,n})_{j,n}$ will denote one of the three types of wavelets $(\phi_{j,n,\varepsilon})_{j,n}$ for $\varepsilon\in\{0,1\}^2\setminus\{(0,0)\}$. The estimates for the low frequency part $(\psi_{n})_n$ are obtained analogously.

For $t=0$, the inequality is trivial as $F_0\phi_{j,n}=0$. Let then $t\in Z\setminus\{0\}$ and notice that, as $Z$ is $\eta$-separated and $0\in Z$, we have that $|t|\geq\eta$. Suppose first that $0\leq b\leq 1$, so that $0\leq 1-b\leq 1$. As the wavelets are compactly supported and at least $1$-regular, there exists a constant $C>0$ (depending only on the wavelet dictionary) such that
\begin{align*}
    |\widehat{\phi_{0,0}}| \leq \frac{C}{|\cdot|^{1-b}}.
\end{align*}
Suppose now that $b>1$. By property \eqref{eq:wav_vanishing_moments}, we have that
\begin{align*}
    0 = (-2\pi i)^{|\alpha|}\int x^{\alpha}\phi_{0,0} = \partial^{\alpha}\widehat{\phi_{0,0}}(0),\qquad |\alpha|\leq b-1.
\end{align*}
This implies that there exists $C>0$ depending only on the wavelet dictionary such that
\begin{align*}
    |\widehat{\phi_{0,0}}| \leq \frac{C}{|\cdot|^{1-b}}.
\end{align*}
In both cases, we conclude that
\begin{align*}
    |F_t\phi_{j,n}| &= \frac{1}{(1+|t|^2)^{b/2}}|\widehat{\phi_{j,n}}(t)| \\ &=
    \frac{1}{(1+|t|^2)^{b/2}} 2^{-j} |\widehat{\phi_{0,0}}(2^{-j} t)| \\ &\leq
    \frac{C}{(1+|t|^2)^{b/2}} 2^{-j} \frac{1}{2^{-(1-b)j} |t|^{1-b} }  \\ &\leq
    \frac{C'}{2^{bj} (1+|t|^2)^{1/2}}
\end{align*}
for some constant $C'>0$ depending only on $\eta$ and on the wavelet dictionary. This proves \eqref{eq:fou_wav_illposed_coh}.
\end{proof}

\begin{proof}[Proof of Lemma~\ref{lem:illfour-zestimate}]
We only prove the second estimate, the first one being analogous. We fix $j\in\bN$ and consider the following sets:
\begin{align*}
    A_0 &\coloneqq \{ t\in\bR^2\colon\, |t|\in[C_0 2^j-\eta/2,C_0 2^{j+1}+\eta/2) \},\\
    A_1 &\coloneqq \bigcup_{\substack{t\in Z \\ |t|\in[C_0 2^j,C_0 2^{j+1})}} B_{\eta/2}(t).
\end{align*}
As $Z$ is $\eta$-separated, the balls in the definition of $A_1$ are pairwise disjoint. Therefore, we get that
\begin{align*}
    |A_1| = \#\{ t\in Z\colon\, |t|\in[C_0 2^j,C_0 2^{j+1}) \} \omega_2 \frac{\eta^2}{4},
\end{align*}
where $\omega_2\coloneqq |\cB_1|$. Moreover, by definition $A_1\subset A_0$, therefore
\begin{align*}
    |A_1| \leq |A_0|.
\end{align*}
We have
\begin{align*}
    |A_0| &= \omega_2 \lc (C_0 2^{j+1}+\eta/2)^2 - (C_0 2^j-\eta/2)^2 \rc \\ &=
    3\omega_2 (C_0^2 2^{2j}+C_0 2^j\eta).
\end{align*}
Putting everything together, we get that
\begin{align*}
    \#\{ t\in Z\colon\, |t|\in[C_0 2^j,C_0 2^{j+1}) \} \leq
    12 \frac{C_0^2 2^{2j}+C_0 2^j \eta}{\eta^2} \leq
    12 \frac{C_0^2+C_0\eta}{\eta^2} 2^{2j}.
\end{align*}
Choosing $C_1\coloneqq 12 (C_0^2+C_0\eta)/\eta^2$ yields the result.
\end{proof}

% \begin{proof}[Proof of Theorem~\ref{thm:four-nonu}]
% Proposition~\ref{prop:balancing}, combined with the bounds from Proposition~\ref{prop:illfour-quasidiag} and Lemma~\ref{lem:illfour-balancing}, implies that $F$ satisfies the weak quasi-diagonalization property with respect to $(\Phi,j_0,b)$ provided $N$ is chosen as $N=\lceil C_0 2^{j_0} \rceil$. Moreover, as outlined at the end of Section~\ref{subsec:illfour-coherence}, the estimate given by Lemma~\ref{lem:illfour-coherence} can be put in the form of Assumption~\ref{ass:coherence}.

% The thesis then follows by applying Theorem~\ref{thm:abstract} in the case of nonuniform bounds on the noise and the truncation error.
% \end{proof}

\begin{proof}[Proof of Theorem~\ref{thm:four-nonu}] 
We verify the conditions required to apply Theorem~\ref{thm:abstract} with nonuniform 
bounds on the noise and truncation error.

Let $N = \lceil C_0 2^{j_0} \rceil$. By Proposition~\ref{prop:illfour-quasidiag}, 
the natural forward map $U$ satisfies the quasi-diagonalization property. 
Furthermore, Lemma~\ref{lem:illfour-balancing} establishes that the projection 
$P_N$ satisfies the balancing property. Therefore, by Proposition~\ref{prop:balancing}, 
the truncated forward map $F = P_N \circ U$ satisfies the weak quasi-diagonalization 
property with respect to $(\Phi, j_0, b)$.

As shown in Section~\ref{subsec:illfour-coherence}, the coherence estimate from 
Lemma~\ref{lem:illfour-coherence} can be reformulated to satisfy 
Assumption~\ref{ass:coherence}.

With the above conditions verified, we can apply Theorem~\ref{thm:abstract} with 
nonuniform bounds on the noise and truncation error to obtain the desired recovery 
guarantees.
\end{proof}

\section*{Acknowledgments}

This material is based upon work supported by the Air Force Office of Scientific Research under award number FA8655-23-1-7083. Co-funded by the European Union (ERC, SAMPDE, 101041040). Views and opinions expressed are however those of the authors only and do not necessarily reflect those of the European Union or the European Research Council. Neither the European Union nor the granting authority can be held responsible for them. The authors are members of the ``Gruppo Nazionale per l’Analisi Matematica, la Probabilità e le loro Applicazioni'', of the ``Istituto Nazionale di Alta Matematica''. The research was supported in part by the MIUR Excellence Department Project awarded to Dipartimento di Matematica, Università di Genova, CUP D33C23001110001. Funded
by European Union – Next Generation EU.

\bibliography{refs}{}

\begin{thebibliography}{10}

\bibitem{adcockinfdim}
Ben Adcock.
\newblock Infinite-dimensional compressed sensing and function interpolation.
\newblock {\em Foundations of Computational Mathematics}, 18:661 -- 701, 2015.

\bibitem{adcock2022efficient}
Ben Adcock, Simone Brugiapaglia, Nick Dexter, and Sebastian Moraga.
\newblock {\em On efficient algorithms for computing near-best polynomial
  approximations to high-dimensional, {H}ilbert-valued functions from limited
  samples}, volume~13 of {\em Memoirs of the European Mathematical Society}.
\newblock EMS Press, Berlin, 2024.

\bibitem{adcock_sparsebook}
Ben Adcock, Simone Brugiapaglia, and Clayton~G. Webster.
\newblock {\em Sparse Polynomial Approximation of High-Dimensional Functions}.
\newblock Society for Industrial and Applied Mathematics, Philadelphia, PA,
  2022.

\bibitem{AHC}
Ben Adcock, Anders Hansen, Clarice Poon, and Bogdan Roman.
\newblock Breaking the coherence barrier: {A} new theory for compressed
  sensing.
\newblock {\em Forum Math. Sigma}, 5, 02 2017.

\bibitem{AH1}
Ben Adcock and Anders~C. Hansen.
\newblock A generalized sampling theorem for stable reconstructions in
  arbitrary bases.
\newblock {\em J. Fourier Anal. Appl.}, 18(4):685--716, 2012.

\bibitem{adcock2016generalized}
Ben Adcock and Anders~C Hansen.
\newblock Generalized sampling and infinite-dimensional compressed sensing.
\newblock {\em Found. Comput. Math.}, 16(5):1263--1323, 2016.

\bibitem{AHP2}
Ben Adcock, Anders~C. Hansen, and Clarice Poon.
\newblock On optimal wavelet reconstructions from {F}ourier samples:
  {L}inearity and universality of the stable sampling rate.
\newblock {\em Appl. Comput. Harmon. Anal.}, 36(3):387--415, 2014.

\bibitem{hemant2020}
Hemant~Kumar Aggarwal and Mathews Jacob.
\newblock {J-MoDL}: Joint model-based deep learning for optimized sampling and
  reconstruction.
\newblock {\em IEEE Journal of Selected Topics in Signal Processing},
  14:1151--1162, 2020.

\bibitem{split1}
Giovanni~S. Alberti, Alessandro Felisi, Matteo Santacesaria, and S.~Ivan
  Trapasso.
\newblock Compressed sensing for inverse problems and the sample complexity of
  the sparse {R}adon transform.
\newblock {\em Journal of the European Mathematical Society}, to appear.

\bibitem{alberti2017infinite}
Giovanni~S.\ Alberti and Matteo Santacesaria.
\newblock Infinite dimensional compressed sensing from anisotropic measurements
  and applications to inverse problems in {PDE}.
\newblock {\em Appl. Comput. Harmon. Anal.}, 50:105--146, 2021.

\bibitem{AS}
Nachman Aronszajn and K.~T. Smith.
\newblock Theory of {B}essel potentials. {I}.
\newblock {\em Annales de l'Institut Fourier}, 11:385--475, 1961.

\bibitem{bergh2012interpolation}
J.~Bergh and J.~L{\"o}fstr{\"o}m.
\newblock {\em Interpolation Spaces: An Introduction}.
\newblock Grundlehren der mathematischen Wissenschaften. Springer Berlin
  Heidelberg, 2012.

\bibitem{beurling}
Arne Beurling.
\newblock Local harmonic analysis with some applications to diﬀerential
  operators.
\newblock pages 109--125, 1966.

\bibitem{biros-dogan-2008}
George Biros and G\"{u}nay Do\v{g}an.
\newblock A multilevel algorithm for inverse problems with elliptic {PDE}
  contraints.
\newblock {\em Inverse Problems}, 24(3):034010, 18, 2008.

\bibitem{boyer2019}
Claire Boyer, Jérémie Bigot, and Pierre Weiss.
\newblock Compressed sensing with structured sparsity and structured
  acquisition.
\newblock {\em Applied and Computational Harmonic Analysis}, 46(2):312--350,
  2019.

\bibitem{brezis}
H.~Brezis.
\newblock {\em Functional Analysis, Sobolev Spaces and Partial Differential
  Equations}.
\newblock Universitext. Springer New York, 2010.

\bibitem{CRT}
Emmanuel~J. Candès, Justin Romberg, and Terence Tao.
\newblock Robust uncertainty principles: exact signal reconstruction from
  highly incomplete frequency information.
\newblock {\em IEEE Trans. Inform. Theory}, 52(2):489--509, 2006.

\bibitem{chauffert2014}
Nicolas Chauffert, Philippe Ciuciu, Jonas Kahn, and Pierre Weiss.
\newblock Variable density sampling with continuous trajectories.
\newblock {\em SIAM Journal on Imaging Sciences}, 7(4):1962--1992, 2014.

\bibitem{chauffert2013}
Nicolas Chauffert, Philippe Ciuciu, and Pierre Weiss.
\newblock Variable density compressed sensing in mri. theoretical vs heuristic
  sampling strategies.
\newblock In {\em 2013 IEEE 10th International Symposium on Biomedical
  Imaging}, pages 298--301, 2013.

\bibitem{donoho2006compressed}
David~L Donoho.
\newblock Compressed sensing.
\newblock {\em IEEE Transactions on information theory}, 52(4):1289--1306,
  2006.

\bibitem{dossal2017}
Charles Dossal, Vincent Duval, and Clarice Poon.
\newblock Sampling the {F}ourier transform along radial lines.
\newblock {\em SIAM Journal on Numerical Analysis}, 55(6):2540--2564, 2017.

\bibitem{duval-peyre2015}
Vincent Duval and Gabriel Peyr{\'{e}}.
\newblock Sparse spikes deconvolution on thin grids.
\newblock {\em CoRR}, abs/1503.08577, 2015.

\bibitem{elbadia-2000}
A.~El~Badia and T.~Ha-Duong.
\newblock An inverse source problem in potential analysis.
\newblock {\em Inverse Problems}, 16(3):651--663, 2000.

\bibitem{eldar-kutyniok-2012}
Yonina~C. Eldar and Gitta Kutyniok, editors.
\newblock {\em Compressed sensing}.
\newblock Cambridge University Press, Cambridge, 2012.
\newblock Theory and applications.

\bibitem{Ev}
Lawrence~C. Evans.
\newblock {\em Partial Differential Equations}.
\newblock American Mathematical Society, 2nd edition, 2010.

\bibitem{fornasier-2015}
Massimo Fornasier and Holger Rauhut.
\newblock Compressive sensing.
\newblock In {\em Handbook of mathematical methods in imaging. {V}ol. 1, 2, 3},
  pages 205--256. Springer, New York, 2015.

\bibitem{FR}
Simon Foucart and Holger Rauhut.
\newblock {\em {A Mathematical Introduction to Compressive Sensing}}, volume
  Applied and Numerical Harmonic Analysis.
\newblock Birkh\"auser/Springer, New York, 2013.

\bibitem{gilbargtrudinger}
D.~Gilbarg and N.S. Trudinger.
\newblock {\em Elliptic Partial Differential Equations of Second Order}.
\newblock Classics in Mathematics. Springer Berlin Heidelberg, 2001.

\bibitem{gozcu2018}
Baran Gözcü, Rabeeh~Karimi Mahabadi, Yen-Huan Li, Efe Ilıcak, Tolga Çukur,
  Jonathan Scarlett, and Volkan Cevher.
\newblock Learning-based compressive mri.
\newblock {\em IEEE Transactions on Medical Imaging}, 37(6):1394--1406, 2018.

\bibitem{gozcu2019}
Baran Gözcü, Thomas Sanchez, and Volkan Cevher.
\newblock Rethinking sampling in parallel mri: A data-driven approach.
\newblock In {\em 2019 27th European Signal Processing Conference (EUSIPCO)},
  pages 1--5, 2019.

\bibitem{haldar2018}
Justin~P. Haldar and Daeun Kim.
\newblock Oedipus: An experiment design framework for sparsity-constrained mri.
\newblock {\em IEEE Transactions on Medical Imaging}, 38:1545--1558, 2018.

\bibitem{isakov2}
V.~Isakov.
\newblock {\em Inverse Source Problems}.
\newblock Mathematical surveys and monographs. American Mathematical Society,
  1990.

\bibitem{isakov}
Victor Isakov.
\newblock {\em Inverse Problems for Partial Differential Equations}.
\newblock Springer, 2006.

\bibitem{JAH}
Alexander~Daniel Jones, Ben Adcock, and Anders~C. Hansen.
\newblock Analyzing the structure of multidimensional compressed sensing
  problems through coherence.
\newblock {\em CoRR}, abs/1610.07497, 2016.

\bibitem{knoll2011}
Florian Knoll, Kristian Bredies, Thomas Pock, and Rudolf Stollberger.
\newblock Second order total generalized variation (tgv) for mri.
\newblock {\em Magnetic Resonance in Medicine}, 65(2):480--491, 2011.

\bibitem{krahmer2014}
Felix Krahmer and Rachel Ward.
\newblock Stable and robust sampling strategies for compressive imaging.
\newblock {\em IEEE Transactions on Image Processing}, 23(2):612--622, 2014.

\bibitem{li-wang2013}
X.~Li and R.~Wang.
\newblock Compressed sensing based ultrasonic nondestructive testing by the use
  of sparse deconvolution.
\newblock {\em Chinese Journal of Electronics}, 22:405--409, 04 2013.

\bibitem{lions}
J.L. Lions, P.~Kenneth, and E.~Magenes.
\newblock {\em Non-Homogeneous Boundary Value Problems and Applications: Volume
  II}.
\newblock Grundlehren der mathematischen Wissenschaften. Springer Berlin
  Heidelberg, 2012.

\bibitem{LDP}
Michael Lustig, David Donoho, and John~M. Pauly.
\newblock Sparse {MRI}: The application of compressed sensing for rapid {MR}
  imaging.
\newblock {\em Magn. Reson. Med.}, 58(6):1182--1195, 2007.

\bibitem{LDSP}
Michael Lustig, David~L. Donoho, Juan~M. Santos, and John~M. Pauly.
\newblock Compressed sensing {MRI}.
\newblock {\em IEEE Signal Processing Magazine}, 25(2):72--82, 2008.

\bibitem{Ma}
Stéphane Mallat.
\newblock {\em A Wavelet Tour of Signal Processing. {T}he Sparse Way.}
\newblock Elsevier/Academic Press, Amsterdam, 3rd edition, 2009.

\bibitem{Me}
Yves Meyer.
\newblock {\em Wavelets and Operators}.
\newblock Cambridge University Press, Cambridge, 1992.

\bibitem{natterer}
Frank Natterer.
\newblock {\em The Mathematics of Computerized Tomography}.
\newblock Society for Industrial and Applied Mathematics, 2001.

\bibitem{necas}
J.~Necas, C.G. Simader, {\v{S}}.~Necasov{\'a}, G.~Tronel, and A.~Kufner.
\newblock {\em Direct Methods in the Theory of Elliptic Equations}.
\newblock Springer Monographs in Mathematics. Springer Berlin Heidelberg, 2011.

\bibitem{nishimura}
D.G. Nishimura.
\newblock {\em Principles of Magnetic Resonance Imaging}.
\newblock Stanford University, 1996.

\bibitem{Poon2014}
Clarice Poon.
\newblock A consistent and stable approach to generalized sampling.
\newblock {\em J. Fourier Anal. Appl.}, 20(5):985--1019, 2014.

\bibitem{poon2016}
Clarice Poon.
\newblock On cartesian line sampling with anisotropic total variation
  regularization.
\newblock {\em CoRR}, abs/1602.02415, 2016.

\bibitem{puy2011}
Gilles Puy, Pierre Vandergheynst, and Yves Wiaux.
\newblock On variable density compressive sampling.
\newblock {\em IEEE Signal Processing Letters}, 18(10):595--598, 2011.

\bibitem{RW}
Holger Rauhut and Rachel Ward.
\newblock Interpolation via weighted $\ell^1$ minimization.
\newblock {\em Appl. Comput. Harmon. Anal.}, 40(2):321--351, 2016.

\bibitem{ravishankar2011}
Saiprasad Ravishankar and Yoram Bresler.
\newblock Adaptive sampling design for compressed sensing mri.
\newblock In {\em 2011 Annual International Conference of the IEEE Engineering
  in Medicine and Biology Society}, pages 3751--3755, 2011.

\bibitem{seeger2010}
Matthias Seeger, Hannes Nickisch, Rolf Pohmann, and Bernhard Schölkopf.
\newblock Optimization of k-space trajectories for compressed sensing by
  bayesian experimental design.
\newblock {\em Magnetic Resonance in Medicine}, 63(1):116--126, 2010.

\bibitem{schoenlieb2020}
Ferdia Sherry, Martin Benning, Juan~Carlos De~los Reyes, Martin~J. Graves,
  Georg Maierhofer, Guy Williams, Carola-Bibiane Schönlieb, and Matthias~J.
  Ehrhardt.
\newblock Learning the sampling pattern for {MRI}.
\newblock {\em IEEE Transactions on Medical Imaging}, 39(12):4310--4321, 2020.

\bibitem{tsagkatakis2014}
Grigorios Tsagkatakis, Panagiotis Tsakalides, Arnaud Woiselle, Marc Bousquet,
  George Tzagkarakis, and Jean-Luc Starck.
\newblock Compressed sensing reconstruction of convolved sparse signals.
\newblock In {\em 2014 IEEE International Conference on Acoustics, Speech and
  Signal Processing (ICASSP)}, pages 3340--3344, 2014.

\bibitem{weiss2020}
Tomer Weiss, Sanketh Vedula, Ortal Senouf, Oleg Michailovich, Michael
  Zibulevsky, and Alex Bronstein.
\newblock Joint learning of cartesian under sampling and reconstruction for
  accelerated {MRI}.
\newblock In {\em ICASSP 2020 - 2020 IEEE International Conference on
  Acoustics, Speech and Signal Processing (ICASSP)}, pages 8653--8657, 2020.

\bibitem{xiao2011}
Liang Xiao, Jun Shao, Lili Huang, and Zhihui Wei.
\newblock Compounded regularization and fast algorithm for compressive sensing
  deconvolution.
\newblock In {\em 2011 Sixth International Conference on Image and Graphics},
  pages 616--621, 2011.

\end{thebibliography}
\bibliographystyle{plain}

\appendix
\section{Wavelets and their properties}
\label{appendix:wavelets}
We briefly describe the framework of \textit{multiresolution approximation (MRA)} and the associated construction of wavelets --- we refer the reader to \cite[Chapter 2, 3]{Me} for further details.

\begin{definition}
A \textit{multiresolution approximation} of $L^2(\bR^\dim)$ is a sequence of subspaces $(V_j)_{j\in\bZ}\subset L^2(\bR^\dim)$ such that
\begin{enumerate}
    \item $V_j\subset V_{j+1}$ for all $j\in\bZ$,
    \item \begin{align*}
        \bigcap_{j\in\bZ} V_j = \emptyset,\qquad
        \overline{\bigcup_{j\in\bZ} V_j} = L^2(\bR^\dim),
    \end{align*}
    \item $f\in V_j$ if and only if $f(2\cdot)\in V_{j+1}$,
    \item there exists a function $g\in L^2(\bR^\dim)$ such that $(g(\cdot-n))_{n\in\bZ^\dim}$ is a Riesz basis of the space $V_0$, namely there exists constants $C_1,C_2>0$ such that, for every $x=(x_n)_{n\in\bZ^\dim}\subset\ell^2(\bZ^\dim)$,
    \begin{align*}
        C_1 \|x\|_{\ell^2}^2 \leq
        \bigg\| \sum_{n\in\bZ^d}x_n g(\cdot-n) \bigg\|_{L^2}^2 \leq C_2 \|x\|_{\ell^2}^2.
    \end{align*}
\end{enumerate}
\end{definition}
\begin{definition}
Let $r\in\bN$. A multiresolution approximation $(V_j)_{j\in\bZ}\subset L^2(\bR^\dim)$ is called \textit{$r$-regular} if the function $g$ in the definition can be chosen in such a way that $g\in C^r(\bR^\dim)$ and, for every multi-index $\alpha$ with $|\alpha|\leq r$ and for every $m\in\bN$, there exists $C_m>0$ such that
\begin{align*}
    |\partial^{\alpha}g(x)| \leq \frac{C_m}{(1+|x|^2)^{m/2}},\qquad x\in\bR^\dim.
\end{align*}
\end{definition}
\begin{proposition}[{\cite[Theorem 1, Section 2.4]{Me}}]
Let $(V_j)_{j\in\bZ}$ be a $r$-regular multiresolution approximation of $L^2(\bR^\dim)$ and let $g$ be the function in the definition. Consider the functions
\begin{align*}
    \widehat{\psi} \coloneqq \frac{\widehat{g}}{\lc \sum_{n\in\bZ^d} |\widehat{g}(\cdot+n)|^2 \rc^{1/2}},\qquad
    \psi_{0,n} \coloneqq \psi(\cdot-n).
\end{align*}
Then $(\psi_{0,n})_{n\in\bZ^\dim}$ defines an orthonormal basis of $V_0$ that is $r$-regular.
\end{proposition}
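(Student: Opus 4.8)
The plan is to verify the three assertions—orthonormality, that the system spans $V_0$, and $r$-regularity—separately, working throughout on the Fourier side, where the integer-translation structure turns into a $\bZ^\dim$-periodic multiplier. Write $G(\xi)\coloneqq\sum_{n\in\bZ^\dim}|\widehat g(\xi+n)|^2$ for the periodization. By Plancherel and periodization of the Fourier transform over the unit cube, the Riesz basis condition in item (4) of the definition is equivalent to the pointwise bounds $C_1\le G(\xi)\le C_2$ for a.e.\ $\xi$; in particular $G$ is bounded above and away from $0$, so $\widehat\psi=\widehat g/G^{1/2}$ is a well-defined element of $L^2$.

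First I would establish orthonormality. The standard criterion is that $(\psi(\cdot-n))_{n}$ is orthonormal if and only if $\sum_{k\in\bZ^\dim}|\widehat\psi(\xi+k)|^2=1$ for a.e.\ $\xi$: this follows by writing $\la\psi(\cdot-p),\psi\ra=\int|\widehat\psi(\xi)|^2 e^{-2\pi i p\cdot\xi}\,d\xi$ and recognizing the right-hand side, after periodizing, as the $p$-th Fourier coefficient of $\xi\mapsto\sum_k|\widehat\psi(\xi+k)|^2$. For our $\psi$, using that $G$ is $\bZ^\dim$-periodic, $\sum_k|\widehat\psi(\xi+k)|^2=\sum_k|\widehat g(\xi+k)|^2/G(\xi+k)=G(\xi)/G(\xi)=1$, which gives the claim.

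Next I would show that $(\psi_{0,n})_n$ spans $V_0$. Since $G^{-1/2}$ is a bounded $\bZ^\dim$-periodic function, multiplication by it on the Fourier side preserves the shift-invariant space $V_0=\overline{\Span}(g(\cdot-n))_n$, so each $\psi(\cdot-n)\in V_0$; conversely, from $\widehat g=G^{1/2}\widehat\psi$ with $G^{1/2}$ bounded periodic one sees that $g$, and hence all its translates, lie in $\overline{\Span}(\psi(\cdot-n))_n$. Thus the two systems generate the same closed subspace $V_0$, and being orthonormal, $(\psi_{0,n})_n$ is an orthonormal basis of $V_0$.

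The main obstacle is the last assertion, that $\psi$ inherits the $r$-regularity of $g$. The $C^r$ smoothness and rapid decay of $g$ and of its derivatives translate into $\widehat g\in C^\infty$ with $\partial^\beta\widehat g(\xi)=O((1+|\xi|)^{-r})$ for every multi-index $\beta$ (since $x^\beta g$ remains $C^r$ with rapidly decaying derivatives up to order $r$). Summing over the lattice—which converges together with all its derivatives once $2r>\dim$—shows that $G$ is a smooth $\bZ^\dim$-periodic function bounded below by $C_1>0$, whence $G^{-1/2}$ is smooth and periodic. Expanding $G^{-1/2}(\xi)=\sum_{k\in\bZ^\dim}a_k e^{2\pi i k\cdot\xi}$ in Fourier series, smoothness forces the coefficients $(a_k)_k$ to decay faster than any polynomial, and since multiplication by $e^{2\pi i k\cdot\xi}$ corresponds to the shift $g\mapsto g(\cdot+k)$, one obtains $\psi=\sum_{k\in\bZ^\dim}a_k\,g(\cdot+k)$. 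The rapid decay of $(a_k)_k$ together with the rapid decay of $g$ and of its derivatives up to order $r$ then yields, by a Leibniz/dominated-convergence argument on the series, that $\psi\in C^r$ with each $\partial^\alpha\psi$ ($|\alpha|\le r$) decaying faster than any polynomial. I expect controlling this series—uniform convergence of the differentiated series and the required decay of $\partial^\alpha\psi$—to be the only genuinely technical step, the orthonormality and spanning arguments being soft Fourier-side computations.
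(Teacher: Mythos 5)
The paper itself offers no proof of this proposition: it is quoted directly from Meyer [Me, Theorem~1, Section~2.4]. Measured against the standard argument, your orthonormality and spanning steps are exactly the classical ones and are correct: the Riesz condition is equivalent to $C_1 \le G(\xi) \le C_2$ a.e.\ for the periodization $G(\xi)=\sum_{n\in\bZ^\dim}|\widehat g(\xi+n)|^2$, dividing by $G^{1/2}$ makes the periodization of $|\widehat\psi|^2$ identically $1$ (which characterizes orthonormality of integer translates), and the boundedness and periodicity of $G^{\pm 1/2}$ give equality of the two shift-invariant spaces.

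The genuine gap is in the $r$-regularity step. You deduce smoothness of $G$ by differentiating the lattice series term by term, using $|\partial^\beta\widehat g(\xi)|=O((1+|\xi|)^{-r})$, and this forces the restriction $2r>\dim$, which you flag but which is \emph{not} among the hypotheses: the proposition is asserted for every $r\in\bN$ (the paper's $\bN$ contains $0$), and the decay $(1+|\xi|)^{-r}$ of every $\partial^\beta\widehat g$ is all the hypotheses provide, so for $2r\le\dim$ (e.g.\ $r=0$ in any dimension, or $r=1$ with $\dim=2$) your series of derivatives need not converge and the argument breaks down. The standard repair --- and the actual route in Meyer's proof --- avoids differentiating the series altogether: since $G$ is $\bZ^\dim$-periodic and integrable, compute its Fourier coefficients by unfolding the periodization and applying Plancherel,
\begin{equation}
    \int_{[0,1]^\dim} G(\xi)\, e^{-2\pi i k\cdot \xi}\,\diff{\xi}
    = \int_{\bR^\dim} |\widehat g(\xi)|^2 e^{-2\pi i k\cdot\xi}\,\diff{\xi}
    = \int_{\bR^\dim} g(x)\,\overline{g(x+k)}\,\diff{x},
\end{equation}
i.e.\ they are the integer samples of the autocorrelation of $g$. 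These decay faster than any polynomial in $|k|$ using only the rapid decay of $g$ itself --- no smoothness of $g$, hence no decay of $\widehat g$, is needed --- so $G\in C^{\infty}(\bT^\dim)$ for every $r\ge 0$ and every $\dim$. From that point your own conclusion goes through unchanged: $G\ge C_1>0$ gives $G^{-1/2}\in C^{\infty}(\bT^\dim)$ with rapidly decaying Fourier coefficients $(a_k)_k$, and $\psi=\sum_k a_k\,g(\cdot+k)$ inherits from $g$ the $C^r$ smoothness and the rapid decay of all derivatives up to order $r$, by the Leibniz/dominated-convergence argument you describe.
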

The function $\psi$ in the Proposition is called the \textit{scaling function} of the multiresolution approximation.

We now consider the one dimensional case.
\begin{proposition}[{\cite[Theorem 1, Section 3.2]{Me}}]
Let $(V_j)_{j\in\bZ}$ be a $r$-regular multiresolution approximation of $L^2(\bR)$. For $j\in\bZ$, let $W_j$ denote the orthogonal complement of $V_{j}$ in $V_{j+1}$. Then
\begin{align*}
    L^2(\bR) = \bigoplus_{j\in\bZ} W_j.
\end{align*}
Moreover, there exists a function $\phi\in W_0$ such that, defining
\begin{align*}
    \phi_{j,n}(x) \coloneqq 2^{j/2}\phi(2^jx-n),\qquad j\in\bZ,\,n\in\bZ,
\end{align*}
then $(\phi_{j,n})_{j,n}$ is an $r$-regular orthonormal basis of $L^2(\bR)$, with $\phi_{j,n}\in W_j$.
\end{proposition}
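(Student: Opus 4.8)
The plan is to prove the two assertions in turn: first the orthogonal decomposition $L^2(\bR)=\bigoplus_{j\in\bZ}W_j$, and then the existence of a wavelet $\phi\in W_0$ whose dyadic system is an $r$-regular orthonormal basis. For the decomposition, I would start from the defining relation $V_{j+1}=V_j\oplus W_j$ (orthogonal sum), which holds for every $j$ by definition of $W_j$. Iterating between two scales $j_0\le j_1$ gives $V_{j_1+1}=V_{j_0}\oplus\bigoplus_{j=j_0}^{j_1}W_j$, and distinct $W_j$ are mutually orthogonal since $W_j\subset V_{j+1}\perp W_{j'}$ whenever $j'\ge j+1$. Sending $j_0\to-\infty$ and $j_1\to+\infty$ and invoking the MRA axioms $\bigcap_j V_j=\{0\}$ and $\overline{\bigcup_j V_j}=L^2(\bR)$ then yields $L^2(\bR)=\bigoplus_{j\in\bZ}W_j$.

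For the construction of $\phi$, I would pass to the Fourier side with the convention $\widehat{f}(\xi)=\int e^{-2\pi i\xi x}f(x)\,\diff{x}$. Since the scaling function $\psi$ lies in $V_0\subset V_1$ and $(\sqrt{2}\,\psi(2\cdot-n))_n$ is an orthonormal basis of $V_1$, there is a $1$-periodic low-pass filter $m_0$ with $\widehat{\psi}(\xi)=m_0(\xi/2)\widehat{\psi}(\xi/2)$. Orthonormality of the integer translates of $\psi$ is equivalent to $\sum_{k\in\bZ}|\widehat{\psi}(\xi+k)|^2=1$, which in turn forces the quadrature mirror relation $|m_0(\xi)|^2+|m_0(\xi+1/2)|^2=1$. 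I would then define the high-pass filter $m_1(\xi)\coloneqq e^{-2\pi i\xi}\,\overline{m_0(\xi+1/2)}$ and set $\widehat{\phi}(\xi)\coloneqq m_1(\xi/2)\widehat{\psi}(\xi/2)$. A direct computation using these filter identities shows that $\phi\in V_1$, that $\phi\perp V_0$ (hence $\phi\in W_0$), that $\sum_{k\in\bZ}|\widehat{\phi}(\xi+k)|^2=1$ (so the translates $(\phi(\cdot-n))_n$ are orthonormal), and that the translates of $\psi$ together with those of $\phi$ span $V_1$; therefore $(\phi(\cdot-n))_n$ is an orthonormal basis of $W_0$. Applying the dilation $f\mapsto 2^{j/2}f(2^j\cdot)$, which maps $V_0$ onto $V_j$ and $W_0$ onto $W_j$ isometrically, transports this to the statement that $(\phi_{j,n})_n$ is an orthonormal basis of $W_j$; combining with the decomposition above shows that $(\phi_{j,n})_{j,n}$ is an orthonormal basis of $L^2(\bR)$.

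I expect the main obstacle to be the single-scale verification step: choosing $m_1$ correctly and checking, purely through the quadrature relations, that the integer translates of $\phi$ exactly exhaust $W_0$, so that orthonormality and completeness within one scale hold simultaneously. The remaining point, the $r$-regularity of $\phi$, I would deduce from that of $\psi$: in the $r$-regular setting $m_0$ inherits smoothness together with rapid decay of its Fourier coefficients, $m_1$ is built algebraically from $m_0$, and the formula $\widehat{\phi}(\xi)=m_1(\xi/2)\widehat{\psi}(\xi/2)$ transfers the smoothness and rapid-decay estimates from $\widehat{\psi}$ to $\widehat{\phi}$, which yields the pointwise bounds $|\partial^{\alpha}\phi(x)|\le C_m(1+|x|^2)^{-m/2}$ for $|\alpha|\le r$ and every $m\in\bN$.
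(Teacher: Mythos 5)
Your proposal is correct, and it reproduces essentially the standard argument: the paper itself gives no proof of this proposition --- it is quoted in Appendix~\ref{appendix:wavelets} directly from Meyer's book \cite[Theorem 1, Section 3.2]{Me}, where the proof is precisely the telescoping decomposition $V_{j+1}=V_j\oplus W_j$ combined with the quadrature-mirror-filter construction $m_1(\xi)=e^{-2\pi i\xi}\,\overline{m_0(\xi+1/2)}$, $\widehat{\phi}(\xi)=m_1(\xi/2)\widehat{\psi}(\xi/2)$ that you outline. The points you flag as delicate (exhausting $W_0$ by the translates of $\phi$ via unitarity of the $2\times 2$ filter matrix, and transferring $r$-regularity through the rapid decay of the filter coefficients) are exactly the steps carried out in the cited reference, so there is no gap.
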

The following result is a fundamental tool for the construction of compactly supported wavelets in dimension $1$.
\begin{proposition}[{\cite[Theorem 3, Section 3.8]{Me}}]
Let $r\in\bN$. Then there exists a $r$-regular multiresolution approximation $(V_j)_{j\in\bZ}$ of $L^2(\bR)$ such that the associated functions $\psi$ and $\phi$ have compact support.
\end{proposition}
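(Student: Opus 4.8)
The plan is to realize the multiresolution approximation through the classical Daubechies filter construction, working on the Fourier side throughout (with the normalization $\widehat{u}(\xi)=\int e^{-2\pi i \xi x}u(x)\,\diff{x}$ fixed in the paper). Any MRA is generated by a scaling function obeying a two-scale relation, which in frequency reads $\widehat{\psi}(\xi)=m_0(\xi/2)\,\widehat{\psi}(\xi/2)$ for a $1$-periodic low-pass symbol $m_0$; compact support of $\psi$ forces $m_0$ to be a trigonometric polynomial $m_0(\xi)=\sum_{k}h_k e^{-2\pi i k\xi}$ with finitely many nonzero coefficients, while orthonormality of the translates $(\psi(\cdot-n))_n$ is equivalent to the conjugate quadrature filter (CQF) identity $|m_0(\xi)|^2+|m_0(\xi+\tfrac12)|^2=1$ together with $m_0(0)=1$. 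Smoothness of order $r$ is enforced by demanding a zero of high order at the half-integer, so I would look for $m_0$ of the form $m_0(\xi)=\bigl(\tfrac{1+e^{-2\pi i\xi}}{2}\bigr)^{N}L(\xi)$, with $N=N(r)$ to be chosen large and $L$ a trigonometric polynomial to be determined.

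First I would reduce the CQF identity to a polynomial problem. Writing $y=\sin^2(\pi\xi)$ one has $\cos^2(\pi\xi)=1-y$ and $\sin^2(\pi(\xi+\tfrac12))=1-y$, while $|L|^2$ is an even $1$-periodic trigonometric polynomial, hence a polynomial $P(y)$ in $y$. The identity then becomes the Bezout equation
\begin{equation*}
    (1-y)^N P(y)+y^N P(1-y)=1.
\end{equation*}
Since $y^N$ and $(1-y)^N$ are coprime, there is a unique solution of degree $<N$, namely $P(y)=\sum_{k=0}^{N-1}\binom{N-1+k}{k}y^k$, which is strictly positive on $[0,1]$. The Fej\'er--Riesz spectral factorization lemma then yields a trigonometric polynomial $L$ with $|L(\xi)|^2=P(\sin^2\pi\xi)$, completing the construction of $m_0$.

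Next I would define $\psi$ through the convergent infinite product $\widehat{\psi}(\xi)=\prod_{k\ge 1}m_0(2^{-k}\xi)$, which is legitimate because $m_0(0)=1$. Three properties then require verification: (i) $\psi$ is compactly supported, which holds because $m_0$ is a trigonometric polynomial, so $\psi$ solves a finite dilation equation and its support lies in an interval determined by the degree of $m_0$; (ii) the translates $(\psi(\cdot-n))_n$ are orthonormal, for which I would invoke Cohen's condition, satisfied here since the strict positivity of $P$ and the factor $\cos^{2N}(\pi\xi)$ ensure $m_0(\xi)\neq0$ on $[-\tfrac14,\tfrac14]$; and (iii) $\psi\in C^r$ for $N$ large, the delicate point. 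Using the factorization $\widehat{\psi}(\xi)=\bigl(\tfrac{1-e^{-2\pi i\xi}}{2\pi i\xi}\bigr)^{N}\prod_{k\ge 1}L(2^{-k}\xi)$, one bounds the product by $C(1+|\xi|)^{\log_2(\sup|L|)}$, obtaining $|\widehat{\psi}(\xi)|\lesssim(1+|\xi|)^{-N+\log_2(\sup|L|)}$; choosing $N=N(r)$ large enough makes $(1+|\xi|)^{|\alpha|}\widehat{\psi}$ integrable for all $|\alpha|\le r$, so that $\psi\in C^r$, the rapid-decay bounds in the definition of $r$-regularity being then automatic from the compact support of $\psi$. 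Finally, the wavelet is defined by $\widehat{\phi}(\xi)=m_1(\xi/2)\widehat{\psi}(\xi/2)$ with $m_1(\xi)=e^{-2\pi i\xi}\overline{m_0(\xi+\tfrac12)}$; since $m_1$ is again a trigonometric polynomial, $\phi$ is compactly supported, and the CQF relations guarantee that $(\phi_{j,n})$ is the orthonormal wavelet basis of the complements $W_j$.

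The main obstacle is step (iii): quantifying the regularity of the solution of the dilation equation. The naive decay rate $N$ of $\widehat{\psi}$ is degraded by the growth of $\sup|L|$ with $N$, so one must show that $\log_2(\sup|L|)$ grows strictly more slowly than $N$ (as is classical for the Daubechies masks), so that $N-\log_2(\sup|L|)\to+\infty$ and the attainable smoothness tends to infinity. Verifying orthonormality via Cohen's condition is a secondary technical point, since one must exclude degenerate cancellations in the infinite product rather than merely check the algebraic CQF identity.
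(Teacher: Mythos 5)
The paper gives no proof of this proposition: it is quoted verbatim from Meyer's book (Theorem 3, Section 3.8 of \cite{Me}), whose proof is precisely the Daubechies filter construction you reproduce, so your route coincides with that of the cited source. Your outline is correct, and the one step you leave as ``classical'' --- the growth of $\sup|L|$ --- does close as you expect: since $P$ has positive coefficients it is increasing on $[0,1]$, so
\begin{equation*}
    \sup_{[0,1]}P = P(1) = \sum_{k=0}^{N-1}\binom{N-1+k}{k} = \binom{2N-1}{N-1} \leq \frac{4^{N}}{2\sqrt{\pi N}},
\end{equation*}
whence $\log_2\bigl(\sup|L|\bigr) \leq N - \tfrac12 - \tfrac14\log_2(\pi N)$ and the decay exponent $N - \log_2\bigl(\sup|L|\bigr) \geq \tfrac12 + \tfrac14\log_2(\pi N)$ tends to infinity. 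Note that the crude bound $\binom{2N-1}{N-1}\leq 4^{N}/2$ would \emph{not} suffice (it only yields a decay exponent bounded below by $\tfrac12$), so the $\sqrt{\pi N}$ factor in the central binomial estimate is exactly what makes the brute-force argument work; the resulting smoothness grows only like $\tfrac14\log_2 N$, i.e.\ $N$ must be taken exponentially large in $r$, which is harmless for the pure existence statement being proved.
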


We now describe the construction of \textit{separable wavelet bases} for $L^2(\bR^2)$; the construction can be also generalized to higher dimension.
\begin{proposition}[{\cite[Section 7.7.1, 7.7.2]{Ma}, \cite[Section 3.9]{Me}}]
Let $(V_j)$ be a $r$-regular multiresolution approximation of $L^2(\bR)$ with associated functions $\psi$ and $\phi$. For $j\in\bN$, $n=(n_1,n_2)\in\bZ^2$ and $\varepsilon=(\varepsilon_1,\varepsilon_2)\in\{0,1\}^2\setminus\{(0,0)\}$, we define
\begin{align*}
    \psi_n^2(x_1,x_2) &\coloneqq \psi(x_1-n_1)\psi(x_2-n_2),\\
    \phi_{j,n,\varepsilon}^2(x_1,x_2) &\coloneqq 2^j \phi^{\varepsilon_1}(2^j x_1 - n_1)\phi^{\varepsilon_2}(2^j x_2-n_2),
\end{align*}
where $\phi^{0}=\psi$ and $\phi^{1}=\phi$. Then $(\psi_n^2)_{n\in\bZ}$ generates a $r$-regular multiresolution approximation $(V_j^2)_{j\in\bZ}$ of $L^2(\bR^2)$.

Moreover, let $W_j^2$ be the orthogonal complement of $V_j^2$ in $V_{j+1}^2$. Then $W_j^2$ is generated by $(\phi_{j,n,\varepsilon}^2)_{n,\varepsilon}$; therefore, the family $(\psi_n^2)_{n\in\bZ}\cup(\phi_{j,n,\varepsilon}^2)_{j,n,\varepsilon}$ is an orthonormal basis of $L^2(\bR^2)$. Finally, the following holds: 
\begin{equation}
\label{eq:wav_vanishing_moments}
    \int x^{\alpha}\phi_{j,n,\varepsilon} = 0,\quad
    |\alpha|\leq r.
\end{equation}
\end{proposition}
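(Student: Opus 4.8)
The plan is to construct the two-dimensional multiresolution structure by tensorizing the one-dimensional one, and then to read off both the wavelet basis and all of its stated properties from the corresponding one-dimensional facts recalled above. Throughout I write $\psi_{j,n}(x)=2^{j/2}\psi(2^jx-n)$ and $\phi_{j,n}(x)=2^{j/2}\phi(2^jx-n)$ for the one-dimensional scaling functions and wavelets; by dilation invariance these are orthonormal bases of $V_j$ and $W_j$, respectively.

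First I would set $V_j^2$ to be the closed linear span in $L^2(\bR^2)\cong L^2(\bR)\otimes L^2(\bR)$ of the products $f(x_1)g(x_2)$ with $f,g\in V_j$. The four axioms of an MRA for $(V_j^2)_{j\in\bZ}$ are then inherited factorwise from those of $(V_j)$: nesting $V_j^2\subset V_{j+1}^2$ and the dilation relation $f\in V_j^2\iff f(2\,\cdot)\in V_{j+1}^2$ are immediate; the intersection/union conditions follow from the density of finite sums of tensor products in $L^2(\bR^2)$ together with $\overline{\bigcup_j V_j}=L^2(\bR)$ and $\bigcap_j V_j=\{0\}$; and since $(\psi(\cdot-n))_n$ is an orthonormal basis of $V_0$, the products $\psi_n^2$ form an orthonormal basis of $V_0^2$.

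Next, for the detail spaces I would distribute the one-dimensional splitting $V_{j+1}=V_j\oplus W_j$ over the tensor product:
\[
V_{j+1}^2=(V_j\oplus W_j)\otimes(V_j\oplus W_j)=V_j^2\oplus\big[(V_j\otimes W_j)\oplus(W_j\otimes V_j)\oplus(W_j\otimes W_j)\big],
\]
so that $W_j^2\coloneqq V_{j+1}^2\ominus V_j^2$ is exactly the bracketed sum. Its three orthogonal summands are spanned by tensor products carrying one or two wavelet factors, which are precisely the $\phi_{j,n,\varepsilon}^2$ with $\varepsilon\in\{(1,0),(0,1),(1,1)\}$ under the convention $\phi^0=\psi$, $\phi^1=\phi$. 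Orthonormality of the full family $(\phi_{j,n,\varepsilon}^2)_{n,\varepsilon}$ is clear: for fixed $\varepsilon$ it is a tensor product of two one-dimensional orthonormal systems, while for $\varepsilon\neq\varepsilon'$ at least one coordinate pairs a $V_j$-factor against a $W_j$-factor, and these are orthogonal. Telescoping the MRA then gives $L^2(\bR^2)=V_0^2\oplus\bigoplus_{j\geq0}W_j^2$, so $(\psi_n^2)_n\cup(\phi_{j,n,\varepsilon}^2)_{j,n,\varepsilon}$ is an orthonormal basis.

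The regularity claim is routine: $\psi$ and $\phi$ are $C^r$ and satisfy the decay bounds of the $r$-regular definition, so each tensor product is $C^r$ and, using $(1+|x_1|^2)(1+|x_2|^2)\geq1+|x|^2$, inherits the required decay. For the vanishing moments \eqref{eq:wav_vanishing_moments} I would use Fubini to factor $\int x^\alpha\phi_{j,n,\varepsilon}^2\,dx$ into a product of one-dimensional integrals $\int x_i^{\alpha_i}\phi^{\varepsilon_i}(2^jx_i-n_i)\,dx_i$; the substitution $y_i=2^jx_i-n_i$ turns each into a linear combination of moments $\int y^k\phi^{\varepsilon_i}(y)\,dy$ with $k\leq\alpha_i$. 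Since $\varepsilon\neq(0,0)$, at least one factor carries the one-dimensional wavelet $\phi^1=\phi$, whose $r$-regularity supplies $\int y^k\phi(y)\,dy=0$ for $k\leq r$; as $|\alpha|\leq r$ forces $\alpha_i\leq r$, that factor vanishes and the whole integral is zero. The hard part, such as it is, will be the bookkeeping that correctly identifies $W_j^2$ with the three tensor-product pieces and matches them to the index $\varepsilon$; the remaining steps are transcriptions of one-dimensional facts, the only substantive external input being the vanishing-moment property of the one-dimensional wavelet, which is part of the $r$-regular construction recalled above.
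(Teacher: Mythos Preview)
The paper does not supply its own proof of this proposition: it is stated in the appendix as a known result, with citations to Mallat and Meyer, and no argument is given. Your write-up is the standard tensor-product construction that those references carry out, and it is correct in outline and in detail; the identification $V_{j+1}^2=(V_j\oplus W_j)\otimes(V_j\oplus W_j)$, the resulting three-piece splitting of $W_j^2$ indexed by $\varepsilon$, and the Fubini argument for the vanishing moments are exactly how the cited sources proceed. The only point worth flagging is that the one-dimensional vanishing-moment property $\int y^k\phi(y)\,dy=0$ for $0\le k\le r$, on which your last step rests, is not actually stated among the propositions recalled in the appendix---you should cite it explicitly (it is in Meyer, Chapter~2) rather than describe it as ``recalled above.''
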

We now describe the \textit{Littlewood-Paley property} of $r$-regular multiresolution approximations.
\begin{proposition}[{\cite[Theorem 8, Section 2.8]{Me}}]
\label{prop:littlewood-paley}
Let $(V_j)_{j\in\bZ}$ be a $r$-regular multiresolution approximation of $L^2(\bR^\dim)$ and let $\psi$ and $\phi$ be the associated functions. Let $|s|<r$. Then there exists constants $C_1,C_2>0$ such that the following Littlewood-Paley property holds for $f\in H^s(\bR^\dim)$:
\begin{align*}
    C_1 \|f\|_{H^s}^2 \leq \|P_{V_0}f\|_{L^2}^2 + \sum_{j} 2^{2sj} \|P_{W_j}f\|_{L^2}^2 \leq C_2 \|f\|_{H^s}^2,
\end{align*}
where $P_{V_0}$ and $P_{W_j}$ denote the orthogonal projections on $V_0$ and $W_j$, respectively.
\end{proposition}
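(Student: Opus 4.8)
The plan is to reduce the statement to the standard fact that the wavelet projections $P_{W_j}$ behave like smooth frequency localizations to the dyadic region $\{|\xi|\asymp 2^j\}$, and then to transfer the $H^s$ characterization from a smooth Littlewood--Paley partition to the wavelet one by an almost-orthogonality argument. First I would dispose of the case $s=0$: since the generators of $V_0$ together with $(\phi_{j,n})_{j,n}$ form an orthonormal basis and $f=P_{V_0}f+\sum_j P_{W_j}f$ is an \emph{orthogonal} decomposition, Parseval gives $\|f\|_{L^2}^2=\|P_{V_0}f\|_{L^2}^2+\sum_j\|P_{W_j}f\|_{L^2}^2$, which is exactly the claim with $C_1=C_2=1$.

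For general $s$ with $|s|<r$ I would fix a smooth Littlewood--Paley decomposition $\{\Delta_k\}_{k\ge 0}$, where each $\Delta_k$ is a Fourier multiplier supported in $\{|\xi|\asymp 2^k\}$ and $\Delta_0$ collects $\{|\xi|\lesssim 1\}$, so that the classical equivalence $\|f\|_{H^s}^2\asymp\sum_{k\ge 0}2^{2sk}\|\Delta_k f\|_{L^2}^2$ holds. The heart of the matter is the off-diagonal decay estimate
\[
\|\Delta_k\,P_{W_j}\|_{L^2\to L^2}\ \lesssim\ 2^{-r|k-j|},\qquad j,k\ge 0.
\]
On the Fourier side $P_{W_j}$ acts through the symbol $\widehat{\phi}(2^{-j}\,\cdot\,)$ up to a $2^{j}$-periodic factor, so this bound follows from two features of an $r$-regular wavelet: the vanishing moments of $\phi$ (see \eqref{eq:wav_vanishing_moments}) force $|\widehat{\phi}(\xi)|\lesssim|\xi|^{r}$ near the origin, controlling the overlap when $k\ll j$; while the $C^r$-regularity together with the rapid spatial decay forces $|\widehat{\phi}(\xi)|\lesssim(1+|\xi|)^{-r}$, controlling the overlap when $k\gg j$. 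In higher dimension one runs the same estimate separately for each of the finitely many tensor wavelet types and sums.

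Given this decay, I would run a weighted Schur-test (or Cotlar--Stein) argument: setting $a_j=2^{sj}\|P_{W_j}f\|_{L^2}$ and $b_k=2^{sk}\|\Delta_k f\|_{L^2}$, the estimate above produces couplings governed by the kernel $2^{-(r-|s|)|k-j|}$, which is summable \emph{precisely} because $|s|<r$. Combining this with the resolutions of the identity $\sum_k\Delta_k=\mathrm{Id}$ and $\sum_j P_{W_j}=\mathrm{Id}-P_{V_0}$ yields the two-sided comparison $\sum_j 2^{2sj}\|P_{W_j}f\|_{L^2}^2\asymp\sum_k 2^{2sk}\|\Delta_k f\|_{L^2}^2\asymp\|f\|_{H^s}^2$, with the low-frequency term $\|P_{V_0}f\|_{L^2}^2$ absorbed into the $k=0$ block. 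The resulting $C_1,C_2$ depend only on $r$, $s$ and the wavelet generators.

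The main obstacle I anticipate is making the off-diagonal estimate quantitatively sharp enough to reach the full range $|s|<r$ rather than a proper subrange. A naive term-by-term bound on the frequency periodization of $\widehat\phi$ only converges for $|s|<r-\tfrac12$, so one must exploit the cancellation and periodicity of the symbol --- for instance by arguing directly with the square function, or by using that the mass of $\sum_{k}|\widehat\phi(\,\cdot\,+k)|^2$ concentrates away from both the origin and infinity --- to recover the borderline exponents up to $|s|<r$. This is exactly the technical core of the cited result of Meyer.
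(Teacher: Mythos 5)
Your overall strategy --- the case $s=0$ by Parseval, a smooth Littlewood--Paley system $\{\Delta_k\}$, an almost-orthogonality bound between $\Delta_k$ and $P_{W_j}$, and a Schur/Cotlar transfer --- is a legitimate route, and the key estimate you state, $\|\Delta_k P_{W_j}\|_{L^2\to L^2}\lesssim 2^{-r|k-j|}$, is in fact true. Note first that the paper offers no proof to compare against: the proposition is quoted from \cite[Theorem~8, Section~2.8]{Me}, so the only benchmark is Meyer's argument. The genuine gap in your proposal is that the justification you give for the key estimate does not prove it in the stated strength, and you concede as much in your last paragraph. Bounding the Fourier transform of a \emph{single} wavelet ($|\widehat\phi(\xi)|\lesssim|\xi|^{r+1}$ near the origin by the vanishing moments \eqref{eq:wav_vanishing_moments}, and $|\widehat\phi(\xi)|\lesssim(1+|\xi|)^{-r}$ by $C^r$-regularity and decay) and then periodizing loses a factor $2^{(k-j)N/2}$ on the side $k>j$ in dimension $N$: the annulus $\{|\eta|\asymp 2^{k-j}\}$ meets about $2^{(k-j)N}$ periodicity cells, so this argument only yields $\|\Delta_kP_{W_j}\|\lesssim 2^{-(k-j)(r-N/2)}$ and hence only the range $|s|<r-N/2$. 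Deferring the removal of this loss to ``the technical core of the cited result of Meyer'' is circular, since that is precisely the statement to be proved. Moreover, one of your two suggested repairs is vacuous: because the integer translates of each wavelet type are orthonormal, the periodized square function $\sum_{k\in\bZ^N}|\widehat{\phi}(\xi+k)|^2$ is identically $1$ almost everywhere, so its ``mass'' cannot concentrate anywhere.

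The loss is not intrinsic to your strategy but to working on the Fourier side with one wavelet at a time, which ignores the interference between translates. The standard fix is to argue in physical space with the kernel of the full projection $D_j\coloneqq P_{W_j}$, namely $D_j(x,y)=2^{jN}\sum_{n,\varepsilon}\phi_{\varepsilon}(2^jx-n)\overline{\phi_{\varepsilon}(2^jy-n)}$ (sum over the finitely many wavelet types $\varepsilon$), which satisfies, for every $m\in\bN$ and every multi-index $|\alpha|\leq r$,
\begin{equation}
    |\partial_x^{\alpha}D_j(x,y)|\leq C_m\, 2^{j(N+|\alpha|)}\bigl(1+2^j|x-y|\bigr)^{-m},
    \qquad
    \int_{\bR^N} D_j(x,y)\,y^{\alpha}\,\diff{y}=0,
\end{equation}
the cancellation being a consequence of \eqref{eq:wav_vanishing_moments}. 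The usual Taylor-expansion argument (cancellation of one factor played against smoothness of the other, in both orders, using that $D_j$ is self-adjoint) then gives $\|\Delta_kP_{W_j}\|\lesssim 2^{-(j-k)(r+1)}$ for $k\leq j$ and $\|\Delta_kP_{W_j}\|\lesssim 2^{-(k-j)r}$ for $k\geq j$, with no dimensional loss, after which your Schur argument closes the proof for the full range $|s|<r$. An alternative way to reach the full range, closer to the approximation-theoretic spirit of Meyer's Chapter~2, is to prove a Jackson inequality $\|f-P_{V_j}f\|_{L^2}\lesssim 2^{-js}\|f\|_{H^s}$ and a Bernstein inequality $\|g\|_{H^s}\lesssim 2^{js}\|g\|_{L^2}$ for $g\in V_j$ (both from the analogous kernel bounds for $P_{V_j}$, for integer $s\leq r$ and then by interpolation), deduce the equivalence for $0<s<r$, and obtain $-r<s<0$ by duality using the orthogonality of the decomposition.
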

Finally, the following estimates for $L^p$ norms of Bessel kernels holds.
\begin{proposition}{\cite[Proposition 3, Section 2.8]{Me}}
\label{prop:app_lpnorm}
Let $(V_j)_{j\in\bZ}$ be a $r$-regular multiresolution approximation of $L^2(\bR^\dim)$. There exists constants $C_1,C_2>0$ such that the following holds: for $1\leq p\leq \infty$, $|s|\leq r$ and $j\in\bN$,
\begin{align*}
    C_1 2^{sj} \|f\|_{L^p} \leq \|(I-\Delta)^{s/2}f\|_{L^p} \leq C_2 2^{sj}\|f\|_{L^p},\qquad f\in W_j\cap L^p.
\end{align*}
\end{proposition}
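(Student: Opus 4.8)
The plan is to reduce the statement to a uniform-in-$j$ Fourier multiplier estimate at unit scale, exploiting the dilation structure of the spaces $W_j$. Every $f\in W_j$ can be written as $f=2^{\dim j/2}g(2^j\cdot)$ for a unique $g\in W_0$, obtained by keeping the same wavelet coefficients and passing from $\phi_{j,n}$ to $\phi_{0,n}$. On the Fourier side, writing $\zeta=2^{-j}\xi$ and using $(1+|\xi|^2)^{s/2}=2^{sj}(2^{-2j}+|\zeta|^2)^{s/2}$, a direct computation gives $(I-\Delta)^{s/2}f=2^{sj}2^{\dim j/2}\,(m_j(D)g)(2^j\cdot)$, where $m_j(\zeta)\coloneqq(2^{-2j}+|\zeta|^2)^{s/2}$ and $m_j(D)$ denotes the associated Fourier multiplier. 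Taking $L^p$ norms and undoing the dilation, the claimed two-sided bound becomes equivalent to
\[
    c\,\|g\|_{L^p}\leq \|m_j(D)g\|_{L^p}\leq C\,\|g\|_{L^p},\qquad g\in W_0\cap L^p,
\]
with $c,C>0$ independent of $j\in\bN$ and of $g$.

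For the upper bound it suffices to show that $m_j(D)P_{W_0}$ is bounded on $L^p$ uniformly in $j$, where $P_{W_0}$ is the orthogonal projection onto $W_0$ (which extends boundedly to every $L^p$ by the $r$-regularity of the basis). For the lower bound, note that $m_j$ never vanishes, so $m_j(D)^{-1}=\tilde m_j(D)$ with $\tilde m_j=m_j^{-1}$ of the same form with $s$ replaced by $-s$; since $g=P_{W_0}\tilde m_j(D)\,(m_j(D)g)$ and $P_{W_0}\tilde m_j(D)=(\tilde m_j(D)P_{W_0})^*$, a duality argument reduces the lower bound to the uniform $L^{p'}$-boundedness of $\tilde m_j(D)P_{W_0}$. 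As $|{-s}|\leq r$ as well, it is enough to prove that $m_j(D)P_{W_0}$ is bounded on every $L^p$, uniformly in $j$, for all $|s|\leq r$.

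I would establish this by kernel estimates. The operator $m_j(D)P_{W_0}$ has kernel $\sum_n \Psi_j(x-n)\,\overline{\phi(y-n)}$ (summing also over the wavelet type), where $\Psi_j\coloneqq m_j(D)\phi$. The point is that $\Psi_j$ is a well-localized ``molecule'' at unit scale, uniformly in $j$: its symbol $m_j(\zeta)\widehat{\phi}(\zeta)$ is tamed near $\zeta=0$ by the high-order vanishing of $\widehat\phi$ coming from the vanishing moments \eqref{eq:wav_vanishing_moments} (this controls the blow-up of $m_j$ at the origin when $s<0$), and is tamed at infinity by the decay of $\widehat\phi$ coming from the $C^r$-regularity of the wavelet (this controls the growth $|\zeta|^s$ of $m_j$ when $s>0$). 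The hypothesis $|s|\leq r$ is exactly what makes both effects summable. One then recognizes $\{2^{-sj}(I-\Delta)^{s/2}\phi_{j,n}\}$ as a family of smooth molecules adapted to the dyadic cubes of side $2^{-j}$, with uniform constants, and concludes the $L^p$ bounds from Calderón--Zygmund / Littlewood--Paley theory.

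The main obstacle is precisely this last uniform estimate. Because the wavelets are compactly supported but not band-limited, $\widehat\phi$ has only polynomial decay, so one cannot replace $f$ by a frequency-truncated version and invoke a Bernstein-type inequality; nor is $\Psi_j$ in general an $L^1$ kernel, so a naive Young's-inequality bound fails at the endpoints $p=1,\infty$. Handling these tails uniformly in $j$ --- and thereby locating where the balance $|s|\leq r$ is genuinely used --- is the crux, and is the reason the statement is invoked from the wavelet literature rather than proved by elementary means.
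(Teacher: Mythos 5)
You should first note that the paper itself contains no proof of Proposition~\ref{prop:app_lpnorm}: the statement is imported verbatim from Meyer \cite[Proposition 3, Section 2.8]{Me}, so there is no internal argument to compare yours against, and the relevant comparison is with the standard proof in the wavelet literature --- whose structure your sketch indeed mirrors. Your reductions are correct: every $f\in W_j$ is of the form $2^{\dim j/2}g(2^j\cdot)$ with $g\in W_0$, the scaling identity $(I-\Delta)^{s/2}f = 2^{sj}2^{\dim j/2}\,(m_j(D)g)(2^j\cdot)$ with $m_j(\zeta)=(2^{-2j}+|\zeta|^2)^{s/2}$ is right, and so is the reduction of the lower bound to the upper bound for $-s$ via $g=P_{W_0}\tilde m_j(D)\,m_j(D)g$. (One caveat: the duality identification $(L^{p'})^*=L^p$ breaks down when $p'=\infty$; this is harmless, because once one has convolution-dominated kernel bounds, Schur's test gives boundedness of both $m_j(D)P_{W_0}$ and $P_{W_0}\tilde m_j(D)$ on every $L^p$, $1\le p\le\infty$, directly, with no duality needed.)

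The genuine gap is the one you yourself flag: the uniform-in-$j$ bound $\|m_j(D)P_{W_0}\|_{L^p\to L^p}\le C$ is the entire analytic content of the proposition, and your proposal explains why it should hold without proving it. Moreover, the difficulty is not merely one of bookkeeping: the elementary route you gesture at --- bounding $\sup_x|x^\alpha\Psi_j(x)|$ by $\|\partial^\alpha(m_j\widehat{\phi}\,)\|_{L^1}$ for $|\alpha|\le \dim+1$, where $\Psi_j=m_j(D)\phi$ --- does not close at the endpoint $|s|=r$, which the statement includes. Indeed, the term $\partial^\alpha m_j\cdot\widehat{\phi}$ is bounded on the annulus $2^{-j}\le|\zeta|\le 1$ only by $|\zeta|^{s-|\alpha|}\cdot|\zeta|^{r+1}=|\zeta|^{s+r+1-|\alpha|}$, and for $s=-r$, $|\alpha|=\dim+1$ this is $|\zeta|^{-\dim}$, whose integral over that annulus grows like $j$; so this method yields $|\Psi_j(x)|\lesssim j\,(1+|x|)^{-\dim-1}$ rather than a uniform bound. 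Symmetrically, at $s=r$ the symbol $m_j\widehat{\phi}$ need not decay at infinity at all when $|\widehat{\phi}(\zeta)|\asymp(1+|\zeta|)^{-r}$ (the generic situation for a compactly supported $C^r$ wavelet), so $\Psi_j$ is not obviously an integrable kernel and Young's inequality is unavailable --- as you observed. Closing these endpoint cases requires the finer machinery of the literature proof (a smooth dyadic splitting of the symbol with summation of the pieces, i.e.\ the vaguelette/molecule argument), not just the heuristic that vanishing moments fight the singularity at the origin while regularity fights the growth at infinity. As written, then, your proposal is a correct reduction together with an accurate diagnosis of the difficulty, but the decisive estimate --- exactly the part where the hypothesis $|s|\le r$ must be spent --- is left unproved, so it does not constitute a proof.
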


\end{document}